\newtheorem{coro}{Corollary}
\newtheorem{defi}{Definition }[section]
\newtheorem{lemm}{Lemma}[section]
\newtheorem{prop}{Proposition}[section]
\newtheorem{theo}{Theorem} \newtheorem*{theor}{Theorem C12.14 of [BS16]}
\theoremstyle{definition}
\newtheorem{rema}{Remark}
\def\C{C \hskip -3pt}
\newfont{\df}{cmssbx10} 
\newtheorem{definition}{Definition }
\newtheorem*{definition*}{Definition }
\newtheorem{lemma}[definition]{Lemma}
\newtheorem{proposition}[definition]{Proposition }
\newtheorem{remark}[definition]{Remark }
\newtheorem*{notation*}{Notation}
\newcommand{\ds}{\displaystyle}
\newcommand{\Q}{\mathbb{Q}}
\newcommand{\R}{\mathbb{R}}
\newcommand{\iet}{\mathcal G^+(J)}% Echanges d'intervalles
\newcommand{\ietf}{\mathcal G(J)}%Echanges d'intervalles avec flips
\newcommand{\ieta}{\mathcal A^+(J)}%Echanges d'intervalles affines
\newcommand{\ietaa}{\mathcal A^+([0,1))}
\newcommand{\ietg}{\mathcal{PC}^+(J)}%Echanges d'intervalles continus
\newcommand{\ietgg}{\mathcal{PC}^+([0,1))}
\newcommand{\supp}{\text{Supp}}%support
\newcommand{\disc}{\text{Disc}}%discontinuity set
\newcommand{\fix}{\text{Fix}}%support
\newcommand{\homeo}{\text{Homeo}}%groupe d'homéomorphismes
\newcommand{\diff}{\text{Diff }}%groupe des difféomorphismes
\newcommand{\saf}{\textsc{saf}} % Invariant SAF
\newcommand{\id}{\text{Id}}%Identity
\newcommand{\TS}{T_{\{n_i\}}}%\newcommand{\TS}{T_{(n_i)}}??? Thompson Stein
\newcommand{\FS}{F_{\{n_i\}}}
\newcommand{\DDT}{T^{\ds\text{"}}\hskip-7pt _{\{n_i\}}} %T"
\title[Uniform simplicity of $\mathcal{PC}(I)$]{Uniform simplicity for subgroups of piecewise continuous bijections of the unit interval.}
\begin{document}

\author{Nancy Guelman {\tiny and} Isabelle Liousse {\tiny with an appendix by} Pierre ARNOUX}
\address{{\bf  Nancy  GUELMAN}, IMERL, Facultad de Ingenier\'{\i}a, {Universidad de la Rep\'ublica, C.C. 30, Montevideo, Uruguay.} \emph {nguelman@fing.edu.uy}}

\address{{\bf Isabelle LIOUSSE}, Univ. Lille, CNRS, UMR 8524 - Laboratoire Paul Painlevé, F-59000 Lille, France. \emph {isabelle.liousse@univ-lille.fr}}

\address{{\bf Pierre ARNOUX}, Département de Mathématiques, Université d'Aix-Marseille Campus de Luminy,
 Avenue de Luminy - Case 907, 13288 Marseille Cedex 9, France. \emph{pierre@pierrearnoux.fr}}

\begin{abstract} Let $I=[0,1)$ and $\mathcal{PC}(I)$ [resp. $\mathcal{PC}^+(I)$] be the quotient group of the group of all piecewise continuous [resp.  piecewise continuous and orientation preserving] bijections of $I$ by its normal subgroup consisting in elements with finite support (i.e. that are trivial except at possibly finitely many points). {Unpublished Theorems of Arnoux (\cite{Ar1}) state that $\mathcal{PC}^+(I)$ and certain groups of interval exchanges are simple, their proofs are the purpose of the Appendix.} Dealing with piecewise direct affine maps, we prove the simplicity of the group $\mathcal A^+(I)$ (see Definition \ref{defAIET}). These results can be improved. Indeed, a group $G$ is uniformly simple if there exists a positive integer $N$ such that for any $f,\phi \in G\setminus\{\id\}$, the element $\phi$ can be written as a product of at most $N$ conjugates of $f$ or $f^{-1}$.

We provide conditions which guarantee that a subgroup $G$ of $\mathcal{PC}(I)$ is uniformly simple. As Corollaries, we obtain that $\mathcal{PC}(I)$, $\mathcal{PC}^+(I)$, $PL^+ (\mathbb S^1)$, $\mathcal A(I)$, $\mathcal A^+(I)$ and some Thompson like groups included the Thompson group $T$ are uniformly simple.
\end{abstract}
\maketitle %\tableofcontents

%-------------------Section 1-2 : Introduction and Preliminaries
\section{Introduction}

The algebraic study of groups consisting in continuous transformations of a topological space was initiated  by Schreier and Ulam in 1934 (\cite{SU}) and the question of the simplicity of such groups was raised.

\smallskip

\begin{defi} A group $G$ is \textbf{simple} if any normal subgroup of $G$ is either trivial or equal to $G$. In particular, a simple group $G$ is \textbf{perfect} that is $G$ coincides with the normal subgroup generated its commutators $[a,b]=aba^{-1}b^{-1}$ with $a,b \in G$. 
\end{defi}
 
\smallskip
 
In \cite{UV}, Ulam and Von Neuman proved that the identity component in the group of homeomorphisms of the circle or the $2$-sphere is a simple group. In the seventies lots of (smooth) transformation groups were studied by Epstein, Herman, Thurston, Mather, Banyaga, and proved to be simple (see the books \cite{Ba} or \cite{Bo}).

\smallskip

In \cite{Ula}, Ulam explained that \cite{UV} establishes a sharper theorem: \textit{"for every $f$ and $\phi$ non-trivial and isotopic to identity homeomorphisms of the circle or the $2$-sphere, there exists a fixed number $N$ of conjugates of $f$ or $f^{-1}$ whose product is $\phi$"}. This number does not exceed $23$ and Ulam raised the question of finding the optimal bound. The issue was taken up again in updated versions of the Scottish book (see \cite{ScB}, Problem 29) in relation with Nunnally's work (\cite{Nu}) which states that $N$ is less than $3$ for certain groups of homeomorphisms. This leads to the following

\begin{defi}  Let $N$ be a positive integer.

A group $G$ is $\boldsymbol{N-}$\textbf{uniformly perfect} if any product of commutators in $G$ can be written as a product of {at most} $N$ commutators in $G$.

A group $G$ is $\boldsymbol{N-}$\textbf{uniformly simple} if for any pair $\{f,\phi\}$ of non trivial elements of $G$, one can express $\phi$ as a product of {at most} $N$ conjugates of $f$ or $f^{-1}$ in $G$.
\end{defi}

\smallskip

Note that uniform simplicity implies simplicity but uniform perfectness does not imply perfectness. 

\smallskip

In this context, Nunnally's work establishes the 3-simplicity for certain groups of homeomorphisms. But Nunnally's techniques fail when requiring groups to preserve additional structures (e.g. smooth, PL or area).
Tsuboi (\cite{Ts}) showed the uniform simplicity of the identity component  $\textit{\diff}^r(M^n)_0$ of the group of $C^r$-diffeomorphisms ($1\leq r \leq \infty$, $r\not=n+1$) of a compact connected $n$-dimensional manifold $M^n$ with handle decomposition without handles of index $\frac{n}{2}$. As a Corollary and under the same assumption on $r$, he obtained that 
{\textit{$\textit{\diff}^r(\mathbb S^n)_0$ is $12$-uniformly simple}}.

\smallskip
 
Given a group $G$ and a non trivial subset $S$ of $G$ which is closed under inversion and conjugation, if $G$ is $N$-uniformly simple then any $g\in G$ can be expressed as a product of at most $N$ elements of $S$. In particular, $S$ can be the set consisting of involutions, finite order elements, commutators or reversible maps. Recall that $g\in G$ is said to be \textbf{reversible} if $g$ is conjugated in $G$ to its inverse. In the O'Farell and Short survey on reversibility (\cite{FS} p35), the authors raised the related questions: "\textit{Given $G$ a group, does there exist a positive integer $n$ [resp. $m$] such that  $G$ coincides with $\mathcal I^n=\{ \ h_1 \ \cdots \ h_n  \text{ with } h_i^2=\id  \}$ [resp. with $\mathcal R^m= \{\ r_1 \ \cdots \ r_m  \text{ with } r_i  \text{ reversible}  \}$]~?}" Clearly, for uniformly simple groups both questions have a positive answer.

\smallskip

In this paper, we do not further assume that transformations are continuous and we focus on dimension one. The groups we are interested in are described by the following 

\begin{defi} Let $I=[0,1)$ be the unit interval. 
\begin{itemize}
\item A \textbf{piecewise continuous bijection of $I$} is a bijection $f$ of $I$ that is continuous outside a finite subset of $I$ called {\textbf{discontinuity set} and denoted by  \textbf{\disc}$\boldsymbol{(f)}$. The \textbf{support} of $f$ is the set \textbf{\supp}$\boldsymbol{(f)}=\{ x\in I\ \vert \ f(x)\not=x \}$.}
\item Let $\boldsymbol{\widehat{\mathcal{PC}}(I)}$ be the group of piecewise continuous bijections of $I$. We denote by $\boldsymbol{\mathcal{PC}(I)}$ the quotient group of $\widehat{\mathcal{PC}}(I)$ by its normal subgroup consisting in elements with finite support and the subgroup of $\mathcal{PC}(I)$ consisting in classes of piecewise increasing elements is {referred as} $\boldsymbol{\mathcal{PC}^+(I)}$.
\end{itemize}
\end{defi}

By taking the unique right continuous representative for all $f$ in $\mathcal{PC}^+(I)$, the group $\mathcal{PC}^+(I)$ can be identified with the group of right continuous and piecewise increasing bijections of $I$. But such a representative may not exist for some elements of $\mathcal{PC}(I)$. 

\begin{defi} Let $f\in \mathcal{PC}(I)$. We say that a representative of $f$ is \textbf{good} if it minimizes the number of discontinuity points among the elements of the class $f$.
\end{defi}

Note that this minimizing condition does not guarantee uniqueness, but all the good representatives of a given element of $\mathcal{PC}(I)$ have the same discontinuity point set, the same image of the discontinuity point set and they coincide on their common continuity set. However, it is possible to require more properties in order to exhibit "canonical" representatives.

More precisely, let $f\in \mathcal{PC}(I)$ and $\tilde{f}$ be a good representative of $f$ with discontinuity points $a_i$ where  $0=a_1< \cdots <a_n<1$.  We consider $\sigma$ the finitely supported bijection which sends $\tilde{f}(a_i)$ to $b_j$ the left endpoint of $\tilde{f}((a_i,a_{i+1}))$ with the convention that $a_{n+1}=1$. Note that $\sigma$ is well-defined since the set of all $\tilde{f}(a_i)$ is equal to the set of all $b_j$.

The map $\sigma \tilde{f}$ is a good representative of $f$ and it satisfies $\sigma \tilde{f}([a_i, a_{i+1}))= [b_j, b_{j+1})$,
with the convention that $b_{n+1}=1$. Clearly $\sigma \tilde{f}$ is the unique good representative of $f$ that has this property. Then we give the following

\begin{defi} Let $f\in \mathcal{PC}(I)$. We define the \textbf{best} representative $\boldsymbol{\widehat{f}}$ of $f$  to be the unique good representative of $f$ such that $\hat f([a_i, a_{i+1}))$ is a right-open and left-closed interval, where $a_i$, $1\leq i \leq n$ are the discontinuity points of $\hat f$ and  $a_{n+1}=1$.
\end{defi}

\hskip 1.2 truecm
\includegraphics[width=12cm, height=5cm]{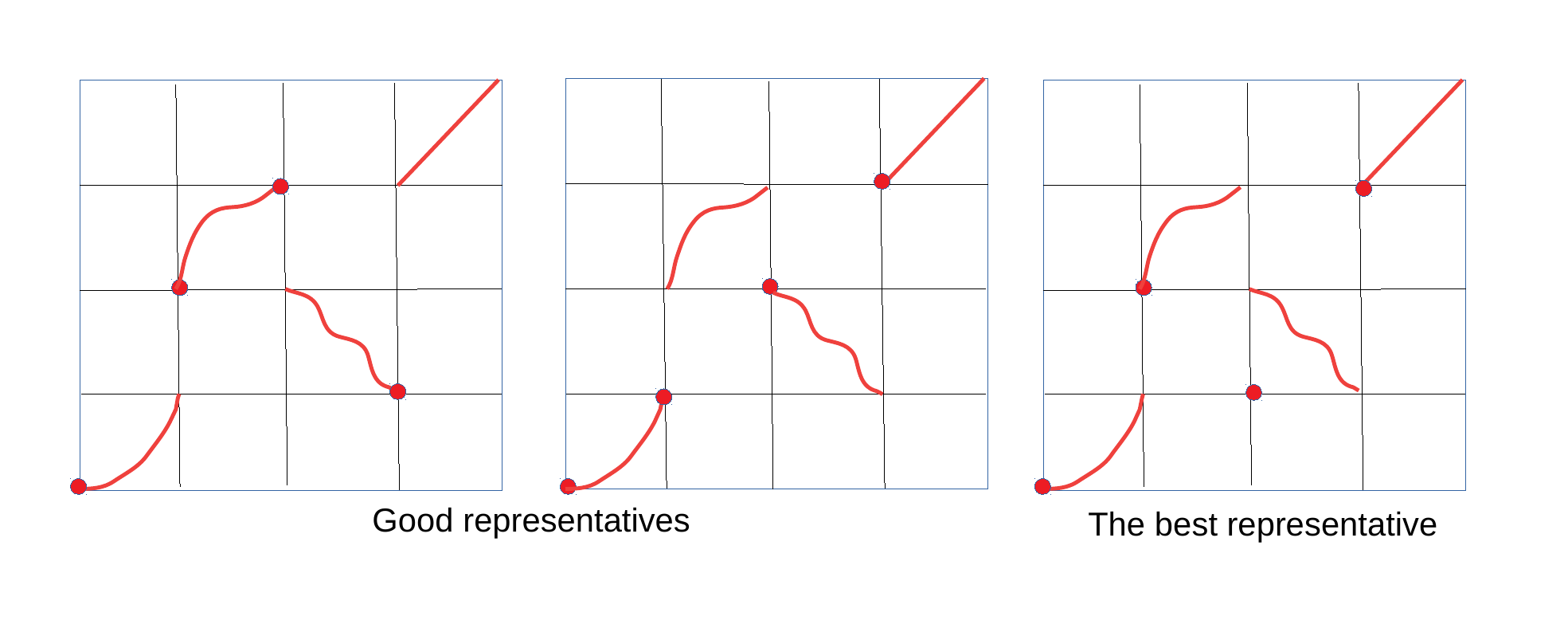} 
\vskip - 0.6 truecm

\smallskip

\begin{rema} \ 
\begin{itemize} 
\item If $f\in\mathcal{PC}^+(I)$ then $\widehat{f}$ is the right-continuous representative of $f$. More generally, for $f\in\mathcal{PC} (I)$, $\widehat{f}$ is the good representative of $f$ that is right continuous at the left endpoints of the continuity intervals where $\widehat{f}$ is orientation preserving and for the continuity intervals where $\widehat{f}$ is orientation reversing,  $\widehat{f}$ sends their left endpoints to the left endpoints of their images.
\item Note that the map $f \mapsto \widehat{f}$ is not a morphism (i.e. there exist $f$ and $g$ such that $\widehat{f \circ g} \not= \widehat{f} \circ \widehat{g}$). 
\end{itemize} 
\end{rema}

\medskip

Since the maps we deal with, are only piecewise continuous, the interval $[0,1)$ can be identified with the unit circle $\mathbb S^1$ and it is equivalent to consider a piecewise continuous bijection as a map $\mathbb S^1 \rightarrow \mathbb S^1$ (see \cite{Cor}). We refer as "continuous versions" of a subgroup $G$ of $\mathcal{PC}(I)$ the subgroups of $G$ consisting in classes of continuous elements of either the interval or the circle. The continuous versions of $\mathcal{PC}^+(I)$ are 
$\homeo^+ (I)$ and $\homeo^+ (\mathbb S^1)$ and their simplicity was shown by Epstein (\cite{Ep}). 

\smallskip

Arnoux (\cite{Ar1}) proved that $\mathcal{PC}^+(I)$ and certain groups of interval exchanges, right after defined, are simple. Unfortunately, these works are unpublished and we express our gratitude to P. Arnoux for reproducing and joining them as an appendix.

\medskip

\begin{defi}\label{defAIET} Let $f\in \widehat{\mathcal{PC}}(I)$. 

\smallskip

$\bullet$ The map $f$ is an \textbf{affine interval exchange transformation (AIET)} if there exists a finite subdivision $0=a_1<a_2<\dots<a_p<a_{p+1}=1$ of \ $[0,1)$ such that for any $i=1,\cdots,p$
$$f_{\vert [a_i,a_{i+1})} (x)  = \lambda_i x + \beta_i, \qquad \lambda_i \in \mathbb R^+, \ \beta_i\in \mathbb R.$$
We define the group $\boldsymbol{\mathcal A^+(I)}$ to be the set of all AIET.

\smallskip

$\bullet$ The map $f$ is an \textbf{affine interval exchange transformation with flips (FAIET)} if there is a finite subdivision  $0=a_1<a_2<\dots<a_p<a_{p+1}=1$ of $[0,1)$ such that for any $i=1, \cdots, p$
$$f_{\vert (a_i,a_{i+1}) } (x)  = \lambda_i x + \beta_i, \qquad \lambda_i \in \mathbb R, \ \beta_i\in \mathbb R.$$
\noindent The numbers $\lambda_i$ are called the \textbf{slopes of~$f$} and their set is denoted by $\boldsymbol{\Lambda(f)}$.

\smallskip

We denote by $\boldsymbol{\widehat{\mathcal A(I)}}$ the group of all FAIET and we define $\boldsymbol{\mathcal A(I)}$ to be the group of all classes of FAIET.

\medskip

$\bullet$ An \textbf{interval exchange transformation (IET)} is $f\in \mathcal A^+(I)$ with $\Lambda(f)=\{1\}$. We define the group $\boldsymbol{\mathcal G^+(I)}$ to be the set of all IET.

$\bullet$ An \textbf{interval exchange transformation with flips (FIET)} is $f\in \widehat{\mathcal A(I)}$ with $\Lambda(f)\subset \{1, -1\}$. We define the group $\boldsymbol{\mathcal G(I)}$ to be the set of all classes of FIET.
\end{defi}

The continuous versions of $\mathcal A^+(I)$ are $PL^+(I)$ and $PL^+(\mathbb S^1)$, the groups of piecewise affine homeomorphisms ({commonly referred as \textbf{PL homeomorphisms}}) of the unit interval and the circle respectively. 

\medskip

For interval exchange transformations, Arnoux (\cite{Ar1}, \cite{Ar}) and Sah (\cite{Sah}) established that \textit{$[\mathcal G^+(I),\mathcal G^+ (I)]$ is simple} and an unpublished part of \cite{Ar1} (III Proposition 1.4) showed that \textit{$\mathcal G(I)$ is simple.} In \cite{Ep}, Epstein proved that \textit{$PL^+ (\mathbb S^1)$ and $[PL^+ (I),PL^+ (I)]$ are simple}. In Section \ref{appendix1}, we prove 

\begin{theo} \label{thA} The group $\mathcal A^+(I)$ is simple.\end{theo}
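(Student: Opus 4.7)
My plan is to take an arbitrary nontrivial normal subgroup $N \trianglelefteq \mathcal{A}^+(I)$ and a nontrivial element $f \in N$, and to show that $N$ must exhaust $\mathcal{A}^+(I)$. Since $f$ represents a class with infinite support, I can locate a small open interval $J \subset I$ on which a good representative of $f$ is continuous (hence an affine map) and which is displaced by $f$, meaning $f(J) \cap J = \emptyset$. This ``displacement'' setup is the standard first move for simplicity proofs in transformation groups.

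The heart of the argument is a double-commutator trick. For any $g \in \mathcal{A}^+(I)$ supported in $J$, the element $\eta := fgf^{-1}$ is supported in the disjoint interval $f(J)$ and therefore commutes with $g$, so $[f,g] = \eta g^{-1} \in N$. For any $\sigma \in \mathcal{A}^+(I)$ acting as the identity on $f(J)$, one has $\sigma \eta \sigma^{-1} = \eta$, and a direct computation gives
\[
[f,g]^{-1}\,\cdot\,\sigma[f,g]\sigma^{-1} \;=\; g\eta^{-1}\cdot\eta(\sigma g\sigma^{-1})^{-1} \;=\; g\sigma g^{-1}\sigma^{-1} \;=\; [g,\sigma] \;\in\; N.
\]
In particular, letting both $g$ and $\sigma$ range over elements supported in $J$, $N$ contains $[\mathcal{A}^+_J,\mathcal{A}^+_J]$, where $\mathcal{A}^+_J := \{\phi \in \mathcal{A}^+(I) : \supp(\phi) \subseteq J\}$. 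Since $\mathcal{A}^+_J$ is canonically isomorphic to $\mathcal{A}^+(I)$ via an affine rescaling of $J$ onto $I$, this yields in $N$ a full isomorphic copy of $[\mathcal{A}^+(I),\mathcal{A}^+(I)]$ situated on $J$.

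The final step is the bootstrap from ``$N$ contains the commutator subgroup of $\mathcal{A}^+_{J'}$ for every small $J'$'' (obtained by conjugating $J$ to other intervals using the transitivity of $\mathcal{A}^+(I)$ on small intervals) to ``$N = \mathcal{A}^+(I)$''. I would carry this out by invoking the decomposition of any AIET as a product $T \circ \psi$ of an IET $T \in \mathcal{G}^+(I)$ and a PL homeomorphism $\psi \in PL^+$ carrying the slope data (such a $\psi$ exists because the slopes weighted by the interval lengths sum to $1$), and then applying the known simplicity of $[\mathcal{G}^+(I),\mathcal{G}^+(I)]$ (Arnoux--Sah) and of $[PL^+(I),PL^+(I)]$ (Epstein) to show that $N$ contains all locally-supported IETs and PL maps, and hence, via fragmentation, the whole group.

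The main obstacle is this final bootstrap. Controlling only commutator subgroups locally is not automatically enough: one must also establish that $\mathcal{A}^+(I)$ itself is perfect, i.e., admits no nontrivial abelian quotient. Each of the subgroups $\mathcal{G}^+(I)$ and $PL^+(I)$ has a large abelianization (the Sah--Arnoux--Fathi invariant and the slope homomorphisms respectively), so the key delicate point is that these invariants cancel within the amalgamated group $\mathcal{A}^+(I)$. I expect to establish perfectness by exhibiting, for each potential slope-type obstruction on a PL piece $\psi$, an IET conjugation that realizes $\psi$ as a commutator in $\mathcal{A}^+(I)$ — exploiting precisely the slack (arbitrary affine slopes combined with arbitrary translations) that distinguishes $\mathcal{A}^+(I)$ from its two subgroups.
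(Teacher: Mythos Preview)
Your double-commutator reduction is correct and is essentially Epstein's lemma; in fact the paper contains exactly this argument in its Appendix (Lemma~\ref{lemmepst} and the subsequent treatment of $\ieta$). The paper's \emph{main} proof in Section~\ref{appendix1}, however, takes a different route: it shows (Lemma~\ref{clcoin}) that every nontrivial involution in $\mathcal A^+(I)$ is conjugate either to $R_{1/2}$ or to the restricted half-rotation $RR_{1/2}$, proves that $\mathcal A^+(I)$ is generated by its involutions, and then shows that any nontrivial normal subgroup $N$ contains a representative of each conjugacy class by building commutators $[f,i]$ with prescribed supports. Your approach and Section~\ref{appendix1} both need perfectness of $\mathcal A^+(I)$ as input; the involution-classification route is slightly more constructive, while your Epstein route generalises more readily (and is what the Appendix exploits for $\ietg$ as well).

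The one genuine gap is the perfectness step, and your proposed fix is on the wrong track. You suggest cancelling the SAF invariant of the IET factor against the slope homomorphisms of the PL factor via ``IET conjugation''; this is vague and in fact unnecessary. The paper's argument is much cleaner: after writing $f=g\circ h$ with $g\in\mathcal G^+(I)$ and $h\in PL^+(I)$ (Proposition~\ref{PLAIET}), observe that $h$ lies in $PL^+(I)\subset PL^+(\mathbb S^1)$, which is simple (Epstein) and hence perfect. For $g$, write it as a product of restricted rotations $R_{\theta,J}$; the key point you are missing is that each $R_{\theta,J}$, although an IET, is a rotation of the circle $\mathbb S_J$ and therefore lies in the embedded copy $PL^+(\mathbb S_J)\hookrightarrow \mathcal A^+(I)$, which is again simple. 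Thus every generator is a product of commutators coming from copies of $PL^+(\mathbb S^1)$ sitting inside $\mathcal A^+(I)$, and no interaction between SAF and slope data is required. Once you have perfectness, your bootstrap goes through exactly as you wrote: $N\supseteq[\mathcal A^+_J,\mathcal A^+_J]=\mathcal A^+_J$ for every small $J$, and fragmentation finishes.
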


It was not proven in \cite{Ar1} that $\mathcal A^+(I)$ is simple, however the tools of \cite{Ar1} provide a different proof which is detailed in the appendix. Recently, Lacourte (\cite{Lac} Theorem 1.4) proved that \textit{$\mathcal{PC}(I)$ and $\mathcal A(I)$ are simple}. Note that groups of piecewise affine bijections are particularly known because of the popularity of Thompson’s groups and their generalizations.

\medskip

\begin{defi} Let $\Lambda \subset \mathbb R^{+*}$ be a multiplicative subgroup and $A \subset \mathbb R$ be an additive subgroup which is closed under multiplication by $\Lambda $ and such that $1\in A$.

\smallskip

$\bullet$ The \textbf{Bieri-Strebel groups} are:
\begin{itemize}
\item[--] $V_{\Lambda, A}$ the subgroup of $\mathcal A^+ (I)$ consisting of elements with slopes in $\Lambda$, discontinuity points and their images in $A$, 
\item[--] $T_{\Lambda, A}$ the intersection subgroup of $PL^+ (\mathbb S^1)$ with $V_{\Lambda, A}$ and
\item[--] $F_{\Lambda, A}$ the intersection subgroup of $PL^+ (I)$ with $V_{\Lambda, A}$.
\end{itemize} 

\smallskip

$\bullet$ In the case that $A=\mathbb Z [{1}/{m}]$ and $\Lambda=\langle m \rangle$ with $m\in \mathbb N^*$, we get the \textbf{Higman-Thompsom group} $V_m$ and the \textbf{Brown-Thompson groups} $T_{m}$ and $F_{m}$.

\medskip

$\bullet$ Let $1<n_1<\dots<n_p$ be $p$ integers generating a rank $p$ free abelian multiplicative subgroup $\Lambda=\langle n_i\rangle \subset\mathbb Q^{+*}$. The \textbf{Stein-Thompson groups} are $T_{\Lambda,A}$ and $F_{\Lambda, A}$ with $A=\mathbb Z [\Lambda]$. They are denoted by $\TS$ and $\FS$.
\end{defi}

\medskip

It was shown by Thompson that \textit{$T_2$ and $V_2$ are simple} (see e.g. \cite{CFP}). Generalizing a result of Brown (\cite{Bro}), Stein (\cite{Ste}) proved that \textit{$\TS$ and $\FS$ are finitely presented and $\DDT$ is simple}. In Section \ref{appendix2}, we prove 

\begin{theo} \label{ThStTh} The Stein-Thompson groups $T_{\{n_1,n_1^ {2k}-1,... , n_p\}}$ are simple. \end{theo}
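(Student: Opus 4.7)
The plan is to deduce the simplicity of $T_{\{n_1, n_1^{2k}-1, \ldots, n_p\}}$ from Stein's theorem, which asserts that the second derived subgroup $\DDT$ is simple. It therefore suffices to show that under the stated hypothesis, the full group $\TS$ coincides with $\DDT$, i.e.\ that $\TS$ is ``doubly perfect''. The whole content of the theorem is encoded in the specific form $n_1^{2k}-1$ of one of the generators.

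The role of this specific generator is as follows. The coefficient ring $A=\mathbb{Z}[\Lambda]$ contains $1$ and is closed under multiplication by $\Lambda$, so since $n_1^{2k}-1\in\Lambda$, we have $\tfrac{1}{n_1^{2k}-1}\in A$. Using the factorisation
$$n_1^{2k}-1 \;=\; (n_1-1)(n_1+1)\bigl(n_1^{2(k-1)}+n_1^{2(k-2)}+\cdots+1\bigr),$$
which crucially exploits the \emph{even} exponent $2k$ so that both $n_1-1$ and $n_1+1$ appear as factors, we conclude that $\tfrac{1}{n_1-1}$ and $\tfrac{1}{n_1+1}$ lie in $A$. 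Consequently the rotations of $\mathbb{S}^1$ by $\tfrac{1}{n_1-1}$ and $\tfrac{1}{n_1+1}$ are elements of $\TS$. These are exactly the ``torsion'' rotations that carry the abelianisation obstructions known to occur for Brown-Thompson groups $T_m$ with $m$ odd, and having them available in $\TS$ is precisely what lets us kill those obstructions.

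Proceeding in three steps: first, compute the abelianisation $\TS/[\TS,\TS]$ by building on Stein's description of $\TS$ via its standard generators (slopes and breakpoints in $A$), and exhibit each generator as a product of commutators using the rotations above together with conjugations by suitable PL elements; this shows $\TS$ is perfect. Second, repeat the argument inside $[\TS,\TS]$, showing that the commutator subgroup is itself perfect and hence equal to $\DDT$. Third, conclude $\TS = [\TS,\TS]=\DDT$, which is simple by Stein's theorem.

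The main obstacle is the combinatorial heart of step one: writing $R_{1/(n_1-1)}$, $R_{1/(n_1+1)}$, and more generally any generator of $\TS$, as a product of commutators of elements whose slopes remain in $\Lambda$ and whose breakpoints remain in $A$. This constraint prevents a direct appeal to Epstein's or Tsuboi's fragmentation arguments and forces one to work with the discrete combinatorics of $\Lambda$. I expect the cleanest approach will be to apply the general uniform-simplicity criterion developed earlier in the paper for subgroups of $\mathcal{PC}(I)$: the hypothesis $n_1^{2k}-1\in\Lambda$ is engineered precisely so that the local flexibility required by that criterion is available, and once this is verified, uniform simplicity (a fortiori simplicity) of $\TS$ follows formally.
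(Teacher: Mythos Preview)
Your overall strategy---show that $\TS$ is perfect, hence equal to $\DDT$, and then invoke Stein's simplicity theorem for $\DDT$---is exactly the paper's. But your proposal never actually carries out the only non-trivial step, namely the proof of perfectness: you identify it as ``the main obstacle'', sketch two possible attacks, and stop. Neither attack is fleshed out, and the second one is circular.

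Concretely: your first route (compute the abelianisation directly, write generators as commutators using the rotations $R_{1/(n_1\pm1)}$) is not implemented; you do not explain which relations these rotations are supposed to kill or how the remaining generators are handled. Your second route (apply Theorem~\ref{th1}) does not avoid the work: hypothesis~(2) of that theorem is precisely that $(B\TS)_0$ is perfect, which is what you have to prove; and the NCI hypothesis is verified in the paper \emph{via} simplicity (Remark~\ref{remNCI}), so deducing simplicity from Theorem~\ref{th1} would be circular here.

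The paper's resolution is quite different from what you sketch and does not use your factorisation $(n_1-1)(n_1+1)\mid n_1^{2k}-1$ at all. It appeals to a black-box criterion of Bieri--Strebel (Theorem~C12.14 of \cite{BiSt}): $(B\TS)_0$ is perfect provided
\begin{enumerate}
\item[(i)] $(1-\Lambda)A=A$, and
\item[(ii)] there is $n/q>1$ in $\Lambda$ with $n^2-q^2\in\Lambda$.
\end{enumerate}
A short computation (Lemma~\ref{lemma :1}) shows $(1-\Lambda)A=dA$ with $d=\gcd(n_i-1)$; since $n_1-1\mid n_1^{2k}-1$, one has $n_1^{2k}-2\equiv -1\pmod{n_1-1}$, so $\gcd(n_1-1,(n_1^{2k}-1)-1)=1$, whence $d=1$ and (i) holds. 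For (ii) one simply takes $n/q=n_1^k/1$, giving $n^2-q^2=n_1^{2k}-1\in\Lambda$. Thus the \emph{even} exponent $2k$ is used because it makes $n_1^{2k}-1$ a difference of squares of elements of $\Lambda$, not because of the divisibility you emphasise. From the perfectness of $(B\TS)_0$ the paper deduces that of $\TS$ via the decomposition $f=f_0f_a$ of Lemma~\ref{dec}, and then concludes with Stein.
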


\medskip

From now on, we focus on uniform simplicity. For the group $[PL^+(I),PL^+(I)]$, it has been implicitly proved by Burago and Ivanov (\cite{BI}). 

Cornulier communicated us that $[\mathcal{G}^+(I),\mathcal{G}^+(I)]$ and ${\mathcal{G}}(I)$ are not uniformly simple. Indeed, if the support of an IET or FIET $f$ has length less than $\frac 1 N$ then any product of $N$ conjugates of $f$ or $f^{-1}$ can not have full support. However, in \cite{GLFIET}, we prove that ${\mathcal{G}}(I)$ is $6$-perfect.

\bigskip

Before stating our main result, we give necessary related notions.

\smallskip

\begin{defi}  Let $a \in [0,1)$. 

\smallskip

\begin{itemize}
\item Let $\delta>0$, we set $\boldsymbol{V_{\delta}(0)}=[0, \delta) \cup (1-\delta, 1)$ and $\boldsymbol{V_{\delta}(a)}=(a- \delta, a+\delta)$, for $\delta$ small enough.

\smallskip

\item When identifying $[0,1)$ with $\mathbb S^1$, an arc contained in $\mathbb S^1 \setminus V_{\delta}(a)$ for some positive $\delta$ is referred as $\boldsymbol{a-}$\textbf{proper interval}. More formally: 
{ \footnotesize\begin{enumerate}[$\boldsymbol{-}$]
\item A $0$-proper interval is an interval having endpoints $\{c,d\}$ with $0<c<d<1$.
\item For $a\in (0,1)$, an $a$-proper interval is either an interval with endpoints $\{c_1,c_2\}$ with $0<c_1<c_2<a$ or $\{d_1,d_2\}$ with $a<d_1<d_2<1$ or the disjoint union of a left-closed interval with endpoints $\{0,c\}$ and an interval with endpoints $\{d,1\}$ with $0<c<a<d<1$.
\end{enumerate} }

\smallskip

\item Let $g\in \widehat{\mathcal{PC}}(I)$, the \textbf{fix point set} of $g$ is the set \textbf{\fix}$\boldsymbol{(g)}=\{ x\in I \ \vert \ g(x)=x \}$.

\smallskip

\item We denote $\ds \boldsymbol{B\widehat{\mathcal{PC}} (I)_a}=\{g\in \widehat{\mathcal{PC}}(I) \ \vert \ \exists \delta>0 : \ V_{\delta}(a)\subset \fix(g)\}$ and we define $\boldsymbol{B{\mathcal{PC}} (I)_a}$ to be the image of $B\widehat{\mathcal{PC}}(I)_a$ in $\mathcal{PC} (I)$ by the quotient morphism. 
\end{itemize}

\smallskip

Let $G$ be a subgroup of $\mathcal{PC}(I)$.
\begin{itemize}
\item We set $\boldsymbol{BG_a} = G \cap B{\mathcal{PC}} (I)_a$.
\item The \textbf{regular $\boldsymbol{G}$-orbit} of $a$ is the set $\boldsymbol{G_{reg}(a)}$ consisting of points $x\in I$ for which there exists $g\in G$ such that $\widehat{g}$ is continuous at $a$ and $\widehat{g}(a)=x$, with the convention that $\widehat{g}$ is continuous at $0$ if $\lim\limits_{x\rightarrow 0+}\widehat{g}(x)=\lim\limits_{x\rightarrow 1-}\widehat{g}(x)$.
\end{itemize}
\end{defi}

\begin{rema}\label{remBGa} 
Note that $f\in B{\mathcal{PC}} (I)_a$ if and only if its best representative $\widehat{f}\in B\widehat{\mathcal{PC}} (I)_a$. 
\end{rema}

Now we introduce the conditions that will guarantee that a perfect subgroup of $\mathcal{PC}(I)$ is uniformly simple.

\smallskip

\begin{defi} Let  $a\in I=[0,1)$ and $G$ be a subgroup of $\mathcal{PC}(I)$.
\begin{itemize}
\item We say that $G$ is $\boldsymbol{a-}$\textbf{\textit{LBS}} ($\boldsymbol{a-}$\textbf{L}ocally \textbf{B}oundedly \textbf{S}upported) if for every $g\in G$ and every $a$-proper interval $J$ such that $\widehat{g}$ is continuous on $J$ and $\widehat{g}(J)$ is $a$-proper, there exists $g_a \in BG_a$ such that $\widehat{g}\vert_{J}=\widehat{g_a}\vert_{J}$.
\item Let $J$ be a subinterval of $I$, we say that $G$ is $\boldsymbol{(a,J)}-$\textbf{proximal} if for every $a$-proper interval $K$ there exists $k\in G$ such that $\widehat{k}(K)\subset J$. 
\item We say that $G$ is $\boldsymbol{a-}$\textbf{proximal} if for every subinterval $J$ of $I$, the group $G$ is $(a,J)$-proximal.
\item We say that $G$ is \textbf{\textit{NCI}} (\textbf{N}on \textbf{C}ommuting \textbf{I}nvolution) if for any involution $i\in G$, there exists $h\in G$ such that  $i$ and $h i h^{-1}$ do not commute, it means that $i\circ (h i h^{-1})$ is not an involution.
\end{itemize}
\end{defi}

\begin{rema}\label{remNCI} \ 
\begin{itemize}
\item If $G$ is infinite and simple then $G$ is NCI. Indeed by absurd, the simplicity of $G$ implies that $G$ coincides with its normal subgroup generated by $\{h i h^{-1},h\in G \}$ which is abelian, but abelian simple groups are cyclic of prime order.
\item If $G$ is perfect and non trivial then $G$ contains elements that are not involutions. Indeed, if any element is an involution then every $g\in G$ can be written as a product of elements of the form $[i_l,j_l]= (i_l j_l)^2= \id$ and hence $g=\id$.
\end{itemize} 
\end{rema}

\smallskip
Our results on uniform simplicity are the following
\begin{theo} \label{th0} Let $a\in I$ and $G$ be a perfect and $a$-proximal subgroup of $B\mathcal{PC}(I)_a$.
\begin{itemize}
\item If $G$ does not contain involution then $G$ is $8$-uniformly simple.
\item If $G$ has the NCI property then $G$ is  $16$-uniformly simple.
\end{itemize}
\end{theo}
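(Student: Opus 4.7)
My plan is to combine the $a$-proximality hypothesis with an Anderson/Burago--Ivanov--Polterovich style commutator calculus. Fix non-trivial $f,\phi\in G$.

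\textbf{Step 1 (Displacement interval and compression).} Since $\widehat f$ fixes some $V_\delta(a)$ and $f\ne \id$, $\widehat f$ moves some point $x_0\notin V_\delta(a)$; because $\widehat f$ is a bijection fixing $V_\delta(a)$, the image $\widehat f(x_0)$ also lies outside $V_\delta(a)$. Shrinking around $x_0$ inside a continuity interval of $\widehat f$ yields an $a$-proper interval $J$ with $\widehat f(J)$ $a$-proper and $\widehat f(J)\cap J=\emptyset$. Since $\phi\in B\mathcal{PC}(I)_a$, $\supp(\widehat\phi)$ is contained in an $a$-proper interval $K$; $a$-proximality delivers $k\in G$ with $\widehat k(K)\subset J$, so $\psi:=k\phi k^{-1}$ is supported in $J$. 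It suffices to express $\psi$ as a bounded product of conjugates of $f^{\pm 1}$, since conjugating back by $k^{-1}$ gives the same expression for $\phi$.

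\textbf{Step 2 (Expressing $\psi$ as $8$ conjugates of $f^{\pm 1}$).} I would establish this in two halves. First, using perfectness of $G$ together with $a$-proximality (which, via conjugation by elements of $G$, allows one to generate arbitrarily many pairwise disjoint copies of $J$ inside $I\setminus V_\delta(a)$), write $\psi$ as a product of \emph{two} commutators $[\alpha_1,\beta_1][\alpha_2,\beta_2]$ with all $\alpha_i,\beta_i$ supported in a single $\widehat f$-displaceable $a$-proper interval. The reduction from the \emph{a priori} unbounded commutator length (given by perfectness alone) to exactly two is a Burago--Ivanov--Polterovich ``displacement absorbs commutators'' trick: conjugating an extra commutator into a disjoint translate of $J$ and using that $\widehat f$ sends $J$ off $J$ collapses it against its neighbor. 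Second, each such commutator $[\alpha,\beta]$ is rewritten as $4$ conjugates of $f^{\pm 1}$ by an Anderson-style identity, verified using the disjointness of $\supp(\alpha)$ and $\widehat f(\supp(\alpha))$ and the analogous data for $\beta$. Multiplying gives $2\cdot 4=8$, which is the first bullet of the theorem.

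\textbf{Step 3 (Involution case).} If $G$ has no involution, Step 2 concludes. Under NCI, if $f$ happens to be an involution, choose $h\in G$ with $f':=hfh^{-1}$ not commuting with $f$; then $(ff')^2\ne\id$, so $ff'$ is not an involution and the first bullet applies to $ff'$, expressing $\phi$ as $8$ conjugates of $(ff')^{\pm 1}$. Since both $f$ and $f'=hfh^{-1}$ are conjugates of $f$, each factor $c_i(ff')^{\pm 1}c_i^{-1}$ splits into $2$ conjugates of $f$, yielding $16$ conjugates of $f^{\pm 1}$ in total. If $f$ is not an involution to begin with, the $8$-bound already holds, so the second bullet follows.

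The main obstacle will be the quantitative two-commutator decomposition in Step 2. In a bounded setting one cannot invoke infinitely many disjoint $\widehat f$-iterates of $J$, so one must instead use $a$-proximality to manufacture enough pairwise disjoint copies of $J$ via conjugation by auxiliary elements of $G$, and check that the ``displacement absorbs commutators'' collapse still works with this finite pseudo-displacement data. Once this is set up, the remainder is an explicit commutator calculation respecting the support combinatorics.
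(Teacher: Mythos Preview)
Your overall arithmetic ($2$ commutators $\times$ $2$ $f$-commutators each $\times$ $2$ conjugates per $f$-commutator $=8$, then $\times 2$ under NCI $=16$) and your Step~3 involution reduction match the paper exactly. The gap is in the displacement data you extract in Step~1 and rely on in Step~2.

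In Step~1 you only arrange $J\cap \widehat f(J)=\emptyset$. Both halves of Step~2 in fact require \emph{three} pairwise disjoint iterates $J,\ \widehat f(J),\ \widehat f^{\,2}(J)$, and this is precisely where the hypothesis ``$f$ is not an involution'' enters (the paper's Lemma~\ref{lem3.1}). For the ``commutator $\to$ two $f$-commutators'' step, the Burago--Ivanov identity (Proposition~\ref{BI}, via Lemma~\ref{fcom}) reads
\[
g_1g_2\,\C_f(g_1^{-1})\,\C_{f^2}(g_2^{-1})=[\,\C_f(g_2)\,g_1g_2,\ f\,],
\]
and its verification uses that $g_1g_2$, $\C_f(\cdot)$ and $\C_{f^2}(\cdot)$ have pairwise disjoint supports; with only $J$ and $\widehat f(J)$ disjoint this identity, and the companion one giving the second $f$-commutator, are unavailable. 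Likewise, the reduction to \emph{two} commutators is not a vague ``displacement absorbs commutators'' collapse but the Dennis--Vaserstein trick (Proposition~\ref{UPBG}): one shows that any product of three commutators supported in $J$ can be rewritten as two, using that $J$, $\widehat f(J)$, $\widehat f^{-1}(J)$ are pairwise disjoint. Your worry about needing ``infinitely many disjoint $\widehat f$-iterates'' is misplaced---exactly three suffice, but two do not.

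Concretely, fix Step~1 to produce $J$ with $J,\widehat f(J),\widehat f^{\,2}(J)$ pairwise disjoint (possible since $f^2\ne\id$), then in Step~2 (i) apply Proposition~\ref{UPBG} with the interval $\widehat f(J)$ to write $\phi$ itself (no preliminary compression of $\phi$ is needed) as two commutators $[g_i,h_i]$, and (ii) use $a$-proximality (Lemma~\ref{lem3.2}) to push $\supp(\widehat{g_i})\cup\supp(\widehat{h_i})$ into $J$ so that Proposition~\ref{BI} applies. That is exactly the paper's route.
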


\begin{theo} \label{th1} Let $a\in I$ and $G$ be an $a$-LBS subgroup of $\mathcal{PC}(I)$ such that 
\begin{enumerate}
\item the regular $G$-orbit of $a$ is infinite and
\item the subgroup $BG_a$ is perfect and $a$-proximal.
\end{enumerate}
\begin{itemize} 
\item If $G$ does not contain involution then $G$ is $12$-uniformly simple.
\item If $G$ has the NCI property then $G$ is  $24$-uniformly simple.
\end{itemize}
\end{theo}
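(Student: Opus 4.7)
The plan is to reduce Theorem \ref{th1} to Theorem \ref{th0} applied to the subgroup $BG_a$, which by hypothesis is perfect and $a$-proximal and hence $N_0$-uniformly simple with $N_0=8$ (no involutions) or $N_0=16$ (NCI). Given non-trivial $f,\phi\in G$, I aim to write $\phi=P\cdot\rho$, where $P$ is a product of $N_0$ conjugates of $f^{\pm 1}$ in $G$ that matches $\phi$ on a large $a$-proper region, and $\rho$ is supported in a small neighborhood of $a$ and expressible as $N_1=4$ (no involutions) or $N_1=8$ (NCI) further conjugates of $f^{\pm 1}$; summation then gives $\phi$ as $N_0+N_1=12$ or $24$ conjugates, as required.

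For the first two steps I use the $a$-LBS property together with the infinite regular $G$-orbit of $a$. The latter supplies elements $g\in G$ with $\widehat{g}$ continuous at $a$ and $\widehat{g}(a)\neq a$; by conjugating $f$ and $\phi$ by suitable such $g$ I may assume $\disc(\widehat{f})\cup\disc(\widehat{\phi})\subset V_{\delta}(a)$ for some small $\delta>0$, at no cost to the count. On the $a$-proper interval $J=I\setminus V_{\delta}(a)$, the maps $\widehat{f}$ and $\widehat{\phi}$ are continuous and (shrinking $\delta$ if necessary) have $a$-proper images, so $a$-LBS provides $f_a,\phi_a\in BG_a$ coinciding with $f,\phi$ on $J$. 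Theorem \ref{th0} inside $BG_a$ then yields $\phi_a=\prod_{i=1}^{N_0}h_if_a^{\epsilon_i}h_i^{-1}$ with $h_i\in BG_a$. Because every $h_i$ fixes $V_{\delta}(a)$ and hence preserves $J$ set-wise, and because $f_a=f$ on $J$, a direct check (after refining $J$ to a subinterval invariant under the intermediate conjugations) shows that $P=\prod_{i=1}^{N_0}h_if^{\epsilon_i}h_i^{-1}$ computed in $G$ agrees with $\phi_a$, and therefore with $\phi$, on $J$. Consequently $\rho=P^{-1}\phi$ is supported in $V_{\delta}(a)$.

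The third step, expressing $\rho\in G$ as $N_1=4$ or $8$ conjugates of $f^{\pm 1}$, is the main obstacle. A second naive invocation of Theorem \ref{th0} on $\rho$ would cost another full $N_0$ conjugates of $f_a^{\pm 1}$, and re-expressing these in terms of $f$ is too wasteful for the tight count. Instead, one picks $g\in G$ with $\widehat{g}(a)\neq a$; the conjugate $g\rho g^{-1}$ then has support in an $a$-proper neighborhood of $\widehat{g}(a)$ and so lies in $BG_a$, and I would construct the $N_1$ required conjugates of $f^{\pm 1}$ directly from the geometry of $\rho$, exploiting its concentration on $V_{\delta}(a)$ together with the $a$-proximality of $BG_a$ and the infinite orbit to bring conjugates of $f$ to bear on this small region. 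Concatenating the three steps yields $\phi=P\cdot\rho$ as a product of $N_0+N_1=12$ or $24$ conjugates of $f^{\pm 1}$, giving the theorem in each case.
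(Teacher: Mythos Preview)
Your proposal has real gaps, and the paper's route is structurally different from what you outline.

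\textbf{Where your argument breaks.} Three of your steps are not justified. First, the opening reduction ``by conjugating $f$ and $\phi$ by suitable such $g$ I may assume $\disc(\widehat{f})\cup\disc(\widehat{\phi})\subset V_{\delta}(a)$'' does not follow from the infinite regular orbit of $a$: having $\widehat{g}$ continuous at $a$ with $\widehat{g}(a)\ne a$ gives no mechanism for pushing \emph{all} discontinuities of $f$ (or $\phi$) into a small neighborhood of $a$, and conjugation by $g$ introduces the discontinuities of $g$ itself. Second, the substitution $f_a\mapsto f$ in $P=\prod h_i f^{\epsilon_i}h_i^{-1}$ does not yield agreement with $\prod h_i f_a^{\epsilon_i}h_i^{-1}$ on $J$: the $h_i\in BG_a$ fix some $V_{\delta_i}(a)$ and hence permute its complement, but that complement can be strictly larger than $J$, so $h_i^{-1}(x)$ may land where $f\ne f_a$. ``Refining $J$ to a subinterval invariant under the intermediate conjugations'' is not possible in general, since the $h_i$ come out of Theorem~\ref{th0} as a black box and you have no control over which $a$-proper sets they preserve. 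Third, the entire bound hinges on expressing $\rho$ as $N_1=4$ (resp.\ $8$) conjugates of $f^{\pm1}$, but you only write ``I would construct the $N_1$ required conjugates \ldots\ directly from the geometry of $\rho$''; this is the heart of the matter and is left completely open. Nothing in your setup makes $\rho$ a single commutator, which is what would be needed for a cost of $4$.

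\textbf{What the paper actually does.} The paper never replaces $f$ by an element of $BG_a$ and never applies Theorem~\ref{th0} as a subroutine. Instead it decomposes $\phi$, not $f$: using $a$-LBS (Lemma~\ref{dec}) and the infinite regular orbit, it writes $\phi=\phi_0\phi_{a'}$ with $\phi_0\in BG_a$, $\phi_{a'}\in BG_{a'}$ for some $a'\in G_{reg}(a)$, and then rearranges this as $\phi=g_0\, b_{a'}$ where $g_0\in BG_a$ and $b_{a'}$ is a \emph{single} commutator in $BG_{a'}$. Proposition~\ref{UPBG} ($2$-uniform perfectness of $BG_a$) turns $g_0$ into two commutators, so $\phi$ is a product of exactly three commutators, each lying in some $BG_{a''}$. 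Now Proposition~\ref{BI} is applied with the original $f\in G$ (not $f_a$): it only needs an interval $J$ with $J,\widehat f(J),\widehat f^{\,2}(J)$ pairwise disjoint and the ability to shrink the supports of the commutator factors into $J$, which $a''$-proximality of $BG_{a''}$ provides. Each commutator becomes two $f$-commutators, hence four conjugates of $f^{\pm1}$, giving $3\times 4=12$; the NCI case doubles this to $24$. The key point you are missing is that Burago--Ivanov (Proposition~\ref{BI}) works for $f$ in the ambient group $G$, so no passage from $f$ to $f_a$ is ever needed.
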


\smallskip

The hypotheses of Theorem \ref{th0} are closely related to the ones of Theorems 1.1 and 5.1 of Gal and Gismatullin in \cite{GG}. However, their theorems that concern either boundedly supported order preserving actions or full groups actions on a Cantor set do not apply directly to all subgroups of $\mathcal{PC}(I)$. Consequences of Theorem \ref{th1} are
 
\begin{coro}\label{coro1} The groups $\mathcal {PC}(I)$ and  $\mathcal {PC}^+(I)$ are uniformly simple.\end{coro}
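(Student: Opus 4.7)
The plan is to apply Theorem \ref{th1} to $G = \mathcal{PC}(I)$ and to $G = \mathcal{PC}^+(I)$, in both cases choosing $a = 0$, and then to invoke the NCI clause. I verify the four hypotheses in turn. For the $0$-LBS property: given $g \in G$ and a $0$-proper interval $J$ such that $\widehat{g}$ is continuous on $J$ and $\widehat{g}(J)$ is also $0$-proper, both $J$ and $\widehat{g}(J)$ lie in $I \setminus V_{\delta}(0)$ for some common $\delta > 0$. I define $g_0$ to equal $\widehat{g}$ on $J$, to be the identity on $V_{\delta}(0)$, and to be any piecewise continuous bijection (respectively piecewise increasing, when $G = \mathcal{PC}^+(I)$) between the leftover complements $I \setminus (J \cup V_{\delta}(0))$ and $I \setminus (\widehat{g}(J) \cup V_{\delta}(0))$; these leftover sets are finite disjoint unions of intervals of positive total length, so a suitable piecewise affine bijection with positive slopes exists, yielding $g_0 \in BG_0$ with $\widehat{g_0}|_J = \widehat{g}|_J$. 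For the infinite regular $G$-orbit of $0$: for every $b \in (0, 1)$ the rotation $r_b : x \mapsto x + b \bmod 1$ belongs to $\mathcal{PC}^+(I) \subseteq \mathcal{PC}(I)$, is continuous at $0$ in the circle sense (both one-sided limits equal $b$), and sends $0$ to $b$; hence $(0, 1) \subseteq G_{\mathrm{reg}}(0)$.

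For the perfectness of $BG_0$: any $f \in BG_0$ fixes some $V_{\delta}(0)$ and is therefore supported on the arc $I \setminus V_{\delta}(0)$. Reparametrising this arc identifies the subgroup of $G$ of elements fixing $V_{\delta}(0)$ with a copy of $\mathcal{PC}(I)$ (respectively $\mathcal{PC}^+(I)$), which is simple by Lacourte \cite{Lac} (respectively by Arnoux's theorem reproduced in the Appendix) and hence perfect. The commutators decomposing $f$ all fix $V_{\delta}(0)$ and therefore lie in $BG_0$. For the $0$-proximality of $BG_0$: given any non-degenerate subinterval $J \subseteq I$ and any $0$-proper interval $K$, I pick a $0$-proper subinterval $J' \subseteq J$ and some $\delta' > 0$ smaller than the distance from $J'$ to $0$, then construct a piecewise affine $k \in BG_0$ fixing $V_{\delta'}(0)$ with $\widehat{k}(K) \subset J'$ by explicit interpolation.

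The NCI property of both $\mathcal{PC}(I)$ and $\mathcal{PC}^+(I)$ follows from Remark \ref{remNCI}, since both groups are infinite and simple. Applying Theorem \ref{th1} with the NCI clause then delivers that both are $24$-uniformly simple. The principal obstacle I foresee is the careful bookkeeping for the perfectness of $BG_0$: one must justify that the commutator factors coming from simplicity of the sub-arc group genuinely realise $f$ as a product of commutators within $BG_0$. This reduces to the straightforward fact that extending an element of the sub-arc group by the identity on the complementary arc defines a group embedding into $BG_0$, so commutators pull back to commutators.
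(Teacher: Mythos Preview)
Your proof is correct and follows essentially the same route as the paper: both verify the hypotheses of Theorem~\ref{th1} at $a=0$, obtain NCI from simplicity via Remark~\ref{remNCI}, and prove perfectness of $BG_0$ by identifying the subgroup of elements supported on a proper sub-arc with a copy of the full group (simple, hence perfect). The paper merely asserts that $0$-LBS, the infinite regular orbit, and $0$-proximality are ``easy to check'', whereas you spell out the constructions; your perfectness argument (reparametrise $I\setminus V_\delta(0)$ and pull back commutators) is exactly the paper's argument that $G([c,d))\cong G$ is perfect, phrased slightly differently.
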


\begin{coro}\label{coro2} The groups $PL^+(\mathbb S^1)$, $\mathcal A(I)$ and $\mathcal A^+(I)$ are uniformly simple.\end{coro}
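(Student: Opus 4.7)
\textbf{Proof plan for Corollary \ref{coro2}.} The plan is to apply Theorem \ref{th1} with $a=0$ to each of the three groups $G\in\{PL^+(\mathbb S^1),\,\mathcal A^+(I),\,\mathcal A(I)\}$. Four conditions must be verified for each: (i) the $0$-LBS property, (ii) infiniteness of the regular orbit $G_{reg}(0)$, (iii) perfectness and $0$-proximality of $BG_0$, and (iv) --- since each group contains an involution (the half-rotation $x\mapsto x+\tfrac12\,(\bmod\,1)$ for $PL^+(\mathbb S^1)$ and $\mathcal A^+(I)$, the flip $x\mapsto 1-x$ for $\mathcal A(I)$) --- the NCI property. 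The second case of Theorem \ref{th1} then yields $24$-uniform simplicity.

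\smallskip

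For (i), given $g\in G$ and a $0$-proper interval $J$ such that $\widehat g|_J$ is continuous and $\widehat g(J)$ is $0$-proper, pick $\delta>0$ small enough that $V_\delta(0)$ is disjoint from $J\cup\widehat g(J)$. Set $g_0=\widehat g$ on $J$, $g_0=\id$ on $V_\delta(0)$, and on the complementary arcs of $J$ in $[\delta,1-\delta]$ define $g_0$ as an affine (resp.\ PL) bijection onto the complementary arcs of $\widehat g(J)$ in $[\delta,1-\delta]$, using finitely many pieces of appropriately chosen slopes (positive for $\mathcal A^+(I)$ and $PL^+(\mathbb S^1)$, arbitrary nonzero for $\mathcal A(I)$). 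Since in the $PL^+(\mathbb S^1)$ case the endpoint constraints imposed by continuity at $\delta$, $1-\delta$ and $\partial J$ are compatible with positive-slope affine pieces (thanks to the $0$-properness of $J$ and $\widehat g(J)$), and since the two affine-IET cases allow length mismatch between source and target arcs, this construction succeeds and produces $g_0\in BG_0$ with $\widehat{g_0}|_J=\widehat g|_J$. For (ii), each $G$ contains the rotation $r_t:x\mapsto x+t\,(\bmod\,1)$, whose best representative is continuous at $0$ in the circular sense with $\widehat{r_t}(0)=t$, so $G_{reg}(0)=[0,1)$.

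\smallskip

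For (iii), write $BG_0=\bigcup_{\delta>0}H_\delta$, where $H_\delta=\{g\in G:V_\delta(0)\subset\fix(g)\}$. For $G=\mathcal A^+(I)$ and $G=\mathcal A(I)$, rescaling $[\delta,1-\delta]$ to $[0,1)$ identifies $H_\delta$ with $\mathcal A^+(I)$ and $\mathcal A(I)$ respectively; both are simple (by Theorem \ref{thA} and by \cite{Lac}), hence perfect, so the direct limit $BG_0$ is perfect. For $G=PL^+(\mathbb S^1)$, each $H_\delta$ rescales to $PL^+(I)$, which is not perfect; however, any $g\in H_\delta$ embedded in $H_{\delta'}$ ($\delta'<\delta$) has slope $1$ at $\delta'$ and $1-\delta'$, so lies in the commutator subgroup of $H_{\delta'}$ --- the latter being simple by Epstein \cite{Ep} --- and is therefore a product of commutators of elements of $H_{\delta'}\subset BG_0$, yielding $BG_0=[BG_0,BG_0]$. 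For $0$-proximality, given any subinterval $J\subset I$ and any $0$-proper interval $K$, choose $\delta>0$ small and construct $k\in H_\delta$ by an affine (resp.\ PL) piece of suitably small slope supported on an interval containing $K$ which sends $K$ into $J$, completed by further affine/PL pieces into a bijection of $[\delta,1-\delta]$.

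\smallskip

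For (iv), given an involution $i\in G$, select $h\in G$ whose support is a short arc meeting but not $i$-invariant (possible since each group contains rich families of small-support elements, e.g.\ compactly supported affine/PL maps); then $hih^{-1}$ has support distinct from that of $i$, and a direct inspection on the symmetric difference of supports shows $i\circ(hih^{-1})$ fails to be an involution. The main anticipated obstacle is the perfectness argument for $BG_0$ in the $PL^+(\mathbb S^1)$ case, where the direct-limit reasoning requires tracking the slope characters at the shrinking endpoints to reduce to Epstein's simplicity statement; a secondary technicality is ensuring that the affine/PL interpolation in the $0$-LBS construction genuinely produces an element of the ambient group $G$ rather than a stray piecewise map outside it.
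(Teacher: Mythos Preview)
Your overall strategy---applying Theorem \ref{th1} with $a=0$ and verifying the four hypotheses---is exactly what the paper does, and your treatment of (i), (ii), and (iii) matches the paper's (the paper simply asserts (i), (ii), and the $0$-proximality in (iii) as ``easy to check'', while for perfectness of $BG_0$ it argues as you do: rescale a supporting interval to $[0,1)$ and invoke simplicity of $\mathcal A^+(I)$, $\mathcal A(I)$, or for $PL^+(\mathbb S^1)$ identify $BG_0$ with $[PL^+(I),PL^+(I)]$ and cite Epstein).

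The one place your argument is weaker than the paper's is (iv). The paper does not construct a non-commuting conjugate by hand; it simply notes that each of $PL^+(\mathbb S^1)$, $\mathcal A^+(I)$, $\mathcal A(I)$ is infinite and simple (by Epstein, Theorem \ref{thA}, and Lacourte respectively), and then invokes Remark \ref{remNCI}: an infinite simple group is automatically NCI. Your ad hoc sketch, by contrast, is incomplete: observing that $\supp(i)\neq\supp(hih^{-1})$ only shows $i\circ(hih^{-1})\neq\id$, not that it fails to be an involution---two distinct involutions can perfectly well commute even with overlapping, non-equal supports, and ``direct inspection on the symmetric difference'' does not rule this out. Replace your paragraph (iv) by the one-line appeal to Remark \ref{remNCI} and the proof is complete.
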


\smallskip

Theorems \ref{th0} and \ref{th1} apply to certain Thompson like groups. They imply that the commutator subgroups of the Brown-Thompson groups $F_n$ and the Higman-Thompson groups $V_n$ are uniformly simple. This was proved in \cite{GG} with smaller bounds. Moreover, Theorem \ref{th1} applies to some Stein-Thompson groups $\TS$, in particular to the Thompson group $T_2$. The uniform simplicity of these groups can not be obtained by Gal and Gismatullin's results. 

\smallskip

\begin{coro} \label{coro3}
The Thompson group $T_2$, the Stein-Thompson groups $T_{\{n_1, n_2, ..., n_p\}}$ with $n_2= n_1^{2k}-1$ and in particular, $T_{\{2,3,\cdots \}}$, are uniformly simple.
\end{coro}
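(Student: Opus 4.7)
The plan is to apply Theorem \ref{th1} to $G = T_2$ and to $G = T_{\{n_1,\ldots,n_p\}}$ with $n_2 = n_1^{2k}-1$, choosing the basepoint $a = 0$. By the classical result of Thompson and by Theorem \ref{ThStTh}, each of these groups is simple; since they are also infinite, Remark \ref{remNCI} supplies the NCI property. The task therefore reduces to checking that $G$ is $0$-LBS, that the regular $G$-orbit of $0$ is infinite, and that $BG_0$ is perfect and $0$-proximal.

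First I would identify $BG_0$: an element of $T_{\Lambda,A}$, viewed as a PL homeomorphism of $\mathbb{S}^1$, lies in $BG_0$ exactly when it is the identity on a neighborhood of $0$. Cutting $\mathbb{S}^1$ open at $0$, such an element becomes a PL homeomorphism of $I$ with slopes in $\Lambda$ and breakpoints in $A$, fixing neighborhoods of both $0$ and $1$; this is precisely the commutator subgroup $[F_{\Lambda,A}, F_{\Lambda,A}]$. For $T_2$ this is $[F_2, F_2]$, classically known to be simple, and for the Stein-Thompson cases the same arguments that prove Theorem \ref{ThStTh} yield simplicity of $[F_{\{n_i\}}, F_{\{n_i\}}]$, so $BG_0$ is perfect in all cases. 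The $0$-proximality of $BG_0$ is standard for Thompson-like groups: given a $0$-proper interval $K \subset (0,1)$ and any subinterval $J$, one refines the underlying $\Lambda$-adic decomposition until both $K$ and $J$ are expressed as unions of standard intervals of compatible combinatorial size, then writes down a matching element of $[F_{\Lambda,A}, F_{\Lambda,A}]$. Infiniteness of the regular $G$-orbit of $0$ is automatic, since every $\Lambda$-adic point of $(0,1)$ is reached by some element of $G$ whose best representative is continuous at $0$.

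The main obstacle is the $0$-LBS property: given $g \in G$ continuous on a $0$-proper interval $J$ with $\widehat{g}(J)$ also $0$-proper, one must produce $g_0 \in BG_0$ agreeing with $g$ on $J$. This amounts to filling in a PL bijection between $I \setminus J$ and $I \setminus \widehat{g}(J)$, with slopes in $\Lambda$ and breakpoints in $A$, that coincides with the identity near $\{0,1\}$. For $T_2$ a standard binary tree rearrangement does the job. For the general Stein-Thompson case the arithmetic hypothesis $n_2 = n_1^{2k}-1$ is precisely what enables the dovetailing of admissible slopes and lengths: the relation $n_1^{2k} = n_2 + 1$ supplies the length adjustments needed to realize the gluing within $A = \mathbb{Z}[\Lambda]$ while keeping slopes in $\Lambda = \langle n_1, \ldots, n_p \rangle$. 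Once this extension is available, Theorem \ref{th1} in its NCI branch delivers uniform simplicity of $G$, which proves the corollary.
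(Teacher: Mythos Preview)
Your overall strategy---apply Theorem~\ref{th1} at $a=0$ and verify NCI, $0$-LBS, infinite regular orbit, and perfectness and $0$-proximality of $BG_0$---is exactly the paper's. The discrepancy is that you have inverted where the arithmetic hypothesis $n_2=n_1^{2k}-1$ enters. In the paper the $0$-LBS property (together with $0$-proximality and the infinite regular orbit) is dispatched as ``easy to check'' for all the groups in question; it relies only on standard transitivity of Bieri--Strebel groups on $A\cap(0,1)$ and needs no special relation among the $n_i$. The arithmetic condition is used solely to establish that $BG_0=(B\TS)_0$ is \emph{perfect}: this is Lemma~\ref{lemma :3}, which verifies the two hypotheses $(i)$ and $(ii)$ of the Bieri--Strebel theorem quoted in Section~\ref{appendix2}. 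Hypothesis $(i)$ reduces, via Lemma~\ref{lemma :1}, to $\gcd(n_i-1)=1$, which holds because $\gcd(n_1-1,\,n_1^{2k}-2)=1$; hypothesis $(ii)$ is satisfied by taking $n/q=n_1^k$, so that $n^2-q^2=n_1^{2k}-1=n_2\in\Lambda$.

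Your route to perfectness of $BG_0$---identify it with $[\FS,\FS]$ and then assert simplicity of the latter ``by the same arguments as Theorem~\ref{ThStTh}''---is not the paper's and carries a circularity: the inclusion $(B\TS)_0\subseteq[\FS,\FS]$ is not automatic (the abelianization of $\FS$ need not be detected by the germs at the endpoints alone), and in fact that inclusion follows \emph{from} the perfectness of $(B\TS)_0$, not the other way around. The paper avoids this by citing Lemma~\ref{lemma :3} directly for perfectness, and for $T_2$ by citing \cite{CFP} for $(BT_2)_0=[F_2,F_2]$.
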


\begin{rema} \ 

{Theorem \ref{th1} does not apply to subgroups of $\homeo^+(I)$, since its Hypothesis (1) implies that $a\not=0$ and the $a$-proximality excludes the possibility that $0$ might be a global fix point.}

{In addition, a simple subgroup $G$ of $\homeo^+(I)$ that contains an $f$ having support in some $[c,d]$ with $0<c<d<1$ is a subgroup of $B\homeo^+(I)_0$ and might satisfy the hypotheses of Theorem \ref{th0}. {\footnotesize Indeed, Let $G$ be a simple group and $f\in G$ having support in some $[c,d]$ with $0<c<d<1$, then any $g\in G$ belongs to the normal closure of $\langle f \rangle$, that is $g$ is the product of $p$ conjugates $k_i f^{\pm 1} k_i^{-1}$ of $f$ or $f^{-1}$. Therefore $g$ has support in $[\min\limits_ik_i(c), \max\limits_i k_i(d)]$ so $g\in B\homeo^+(I)_0$.}}

{In conclusion,  Theorems \ref{th0} and \ref{th1} do not apply to {simple} subgroups of $\homeo^+(I)$ whose elements have dense supports.}
\end{rema}

\smallskip

Finally, going back to the O'Farell and Short questions mentioned above, if $G$ is one of the groups considered in Corollaries 
\ref{coro1}, \ref{coro2} and \ref{coro3}, then there exists a finite positive integer $n$ such that $G=\mathcal I^n=\mathcal R^n$.

\medskip

\noindent \textbf{Acknowledgements.} We express our gratitude to P. Arnoux for writing the appendix. We thank E. Ghys for communicating us, a long time ago, how  Proposition 1 of \cite{Va} allows to conclude that certain groups of homeomorphisms are uniformly perfect. His argument is reproduced in Section 2. We thank Y. Cornulier and O. Lacourte for fruitful discussions. We acknowledge support from the MathAmSud Project GDG 18-MATH-08, the Labex CEMPI (ANR-11-LABX-0007-01), the Universities de Lille and de la Rep\'ublica, the I.F.U.M. and the project ANR Gromeov (ANR-19-CE40-0007). The second author also thanks CNRS for the  d\'el\'egation during the academic year 2019/20.

%------------------- Sections 2-3---------------------

\section{Uniform perfectness}

\subsection{Uniform perfectness for subgroups of $\boldsymbol{B\mathcal{PC}(I)_a}$.}  \ 

\begin{defi} \ \label{def2.1}

$\bullet$ Two subsets $S_1$ and $S_2$ of a group $G$ are \textbf{commuting} if any $\alpha\in S_1$ commutes with any $\alpha'\in S_2$. 

$\bullet$ Given $J$ a subset of $I$, we denote $\widehat{\mathcal{PC}}(J)=\{g\in \widehat{\mathcal{PC}}(I) : \supp(g)\subset J\}$ and we define $\mathcal{PC}(J)$ to be the image of $\widehat{\mathcal{PC}}(J)$ in $\mathcal{PC} (I)$ by the quotient morphism.
\end{defi}

\smallskip

In this section, inspired by the proof of Proposition 1 of Dennis and Vaserstein (\cite{Va}), we establish

\smallskip

\begin{prop}\label{UPBG}  Let $a\in I$ and $G$ be a subgroup of $B\mathcal{PC}(I)_a$. Suppose that there exist $f \in G$ and a subinterval $J \subset (0,1)$ such that $J$, $\widehat{f}(J)$ and $\widehat{f} ^{-1}(J)$ are pairwise disjoint and $G$ is $(a,J)$-proximal then any element of $[G,G]$ is the product of $2$ commutators in $G$. \end{prop}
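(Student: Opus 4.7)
The strategy combines the $(a,J)$-proximality hypothesis (providing ``room'' inside $J$) with a Dennis--Vaserstein recombination trick based on the element $f$ (whose three images $J$, $\widehat f(J)$, $\widehat f^{-1}(J)$ are pairwise disjoint and hence support mutually commuting sets of elements). Given $\phi \in [G,G]$, the proximality step brings $\phi$ and its generating commutators into $J$, and the Vaserstein step then reduces the resulting product of $n$ commutators (all supported in $J$) to just two.

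\emph{Step 1 --- Localisation.} Fix an expression $\phi = \prod_{i=1}^n [a_i, b_i]$ with $a_i, b_i \in G$. Since $G \subset B\mathcal{PC}(I)_a$, each $a_i, b_i$ fixes some neighbourhood of $a$; being finitely many, they all fix a common $V_\delta(a)$, so $\supp(a_i) \cup \supp(b_i) \subset K := I \setminus V_\delta(a)$ for every $i$, and $K$ is an $a$-proper interval. By $(a,J)$-proximality, fix $k \in G$ with $\widehat k(K) \subset J$ and set $\tilde a_i := k a_i k^{-1}$, $\tilde b_i := k b_i k^{-1}$. Then every $\tilde a_i, \tilde b_i$ has support in $J$, and $k\phi k^{-1} = \prod_{i=1}^n [\tilde a_i, \tilde b_i]$. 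Since conjugation preserves the commutator length, it suffices to show that this last product is a product of two commutators in $G$.

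\emph{Step 2 --- Dennis--Vaserstein recombination.} The pairwise disjointness of $J$, $\widehat f(J)$, $\widehat f^{-1}(J)$ implies that any element supported in one of these three sets commutes with any element supported in another. In particular, whenever $u, v \in G$ are supported in $J$ and $u', v' \in G$ are supported in $\widehat f(J)$, one has the identity
\[ [u \cdot u',\, v \cdot v'] \;=\; [u, v] \cdot [u', v']. \]
Applying this with $u' = f\,\alpha\,f^{-1}$ and $v' = f\,\beta\,f^{-1}$, so that $[u', v'] = f[\alpha, \beta] f^{-1}$, lets one merge two commutators supported in $J$ into a single commutator at the price of an $f$-conjugation of one of them. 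Using both $f$- and $f^{-1}$-conjugations together with the third slot $\widehat f^{-1}(J)$ as a buffer absorbing the residual $f$-translations, one iteratively reduces $\prod_{i=1}^n [\tilde a_i, \tilde b_i]$ to an expression $[X, Y] \cdot [Z, W]$ where $X, Y, Z, W \in G$ are built out of the $\tilde a_i, \tilde b_i$ and their $f^{\pm 1}$-conjugates.

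\emph{Main obstacle.} The technical heart of the argument is the bookkeeping in Step 2: each application of the recombination identity introduces an $f$-conjugated residual commutator that must be tracked and ultimately absorbed. The third slot $\widehat f^{-1}(J)$ is precisely what makes this possible, providing a disjoint region where the residue can be placed without disturbing the contributions in $J$ and $\widehat f(J)$. With only two disjoint slots available, the same procedure would leave an unabsorbed residue forcing a third commutator; the three-fold disjointness hypothesis is what allows one to arrive at exactly two.
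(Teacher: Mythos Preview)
Your Step~1 is correct and matches the paper. The gap is Step~2: you describe what the recombination \emph{should} accomplish, but you do not actually carry it out. Saying ``one iteratively reduces $\prod_{i=1}^n [\tilde a_i, \tilde b_i]$ to $[X,Y][Z,W]$'' and then, in the ``Main obstacle'' paragraph, conceding that ``the technical heart of the argument is the bookkeeping in Step~2'' is not a proof --- it is a statement of the problem. The identity $[uu',vv']=[u,v][u',v']$ you quote merges two commutators into one, not $n$ into two, and you give no indication of how the residues are actually absorbed or why the procedure terminates at two rather than three.

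The paper handles this cleanly with a standard Dennis--Vaserstein device that you are missing: it is enough to show that any product of \emph{three} commutators can be rewritten as a product of two (then induct). For $\gamma_1\gamma_2\gamma_3$ with all supports in $J$, one writes
\[
\gamma_1\gamma_2\gamma_3 \;=\; \underbrace{\gamma_1\, \C_f(\gamma_2)\, \C_{f^{-1}}(\gamma_3)}_{C_1}\ \cdot\ \underbrace{\C_{f^{-1}}(\gamma_3^{-1})\, \C_f(\gamma_2^{-1})\, \gamma_2\gamma_3}_{C_2},
\]
where $\C_h(w)=hwh^{-1}$. Here $C_1$ is a single commutator because it is a product of three commutators whose entries pairwise commute (supports in $J$, $\widehat f(J)$, $\widehat f^{-1}(J)$ respectively), and $C_2$ is a single commutator because, after commuting the disjoint-support factors, it has the form $w\cdot \C_f(w^{-1})=[w,f]$ with $w=\C_{f^{-1}}(\gamma_3^{-1})\gamma_2$. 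This explicit $3\to 2$ identity is the missing content of your Step~2; without it (or an equivalent concrete computation) the argument is incomplete.
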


\begin{proof} As Dennis and Vaserstein have noted, it suffices to prove that any product of $3$ commutators is the product of $2$ commutators. 

Let $g=\gamma_1 \gamma_2 \gamma_3$ with $\gamma_i=[a_i,b_i]$. By definition of $B\mathcal{PC}(I)_a$, there exists an $a$-proper interval $K$ which contains the support of all $\widehat{a_i},\widehat{b_i}$ and by $(a,J)$-proximality, there exists $k$ in $G$ such that $\widehat{k}$ sends $K$ into $J$. Thereby, conjugating by $k$, we can suppose that the supports of $\widehat{a_i}, \widehat{b_i}$ are included in $J$. For $\gamma, h \in G$, we denote by $\C_h(\gamma)= h \gamma h^{-1} $. 

Note that $\widehat{\mathcal{PC}}(J)$, $\widehat{\mathcal{PC}}(\widehat{f}(J))$ and $\widehat{\mathcal{PC}}(\widehat{f}^{-1}(J))$ are pairwise commuting subgroups of $\widehat{\mathcal{PC}} (I)$ since $J$, $\widehat{f}(J)$ and $\widehat{f}^{-1}(J)$ are pairwise disjoint. Thus, for any $w_0,w_1,w_2 \in \langle a_i,b_i\rangle$, it holds that $w_0$, $\C_f(w_1)$ and  $\C_{f^{-1}}( w_2)$ are pairwise commuting. Therefore
$$g=\gamma_1 \gamma_ 2 \gamma_ 3 =\gamma_1  \ \  \C_f(\gamma_2) \  \C_{f^{-1}}(\gamma_3) \ \  \C_{f^{-1}} (\gamma_3 ^{-1}) \ \C_f( \gamma_2^{-1})  \ \   \gamma_2 \gamma_3 = C_1 C_2 \text{, \  where}$$

\smallskip

$\bullet$ $C_1= \gamma_1 \  \C_f(\gamma_2) \  \C_{f^{-1}} (\gamma _3)$ is a commutator as a product of commutators of pairwise commuting pairs.
\medskip

$\bullet$  $C_2=  \C_{f^{-1}}(\gamma_3 ^{-1})  \  \C_f(\gamma_2^{-1})  \  \gamma_2 \gamma_3 =
\C_{f^{-1}}( \gamma_3 ^{-1})  \gamma_2   \ \  \C_f( \gamma_2^{-1})   \gamma_3$.

\medskip

Noticing that $\ds \C_f( \gamma_2^{-1})   \gamma_3 =  \C_f \left( \gamma_2^{-1}  \ \C_{f^{-1}}(\gamma_3)  \right ) =
 \C_f \left(   (\C_{f^{-1}}( \gamma_3 ^{-1})  \gamma_2 ) ^{-1}\right)$, we conclude that $C_2$ is a commutator as a product of an element by a conjugate of its inverse. \end{proof}

\subsection{Uniform perfectness for subgroups of $\boldsymbol{\mathcal{PC}(I)}$.} \

In this section, we prove a lemma that will make the link between the uniform perfectness of $G< \mathcal {PC} (I)$ and the one of its subgroups $BG_a$.

\begin{lemm}\label{dec} \ 

If $G< \mathcal {PC} (I)$ is $0$-LBS then for any $g \in G$ and $a\in I\setminus \disc(\widehat{g}) \cup \{0,\widehat{g}^{-1} (0) \}$, there exist $g_0 \in BG_0$ and $g_a \in BG_a$ such that $g=g_0 g_a$.
\end{lemm}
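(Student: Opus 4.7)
The plan is to localize the behavior of $g$ near $a$ inside an element $g_0\in BG_0$ by invoking the $0$-LBS hypothesis, and then to set $g_a:=g_0^{-1}g$, which will automatically fix a neighborhood of $a$ by construction. I first select a small $0$-proper interval around $a$. Since $a\in(0,1)$, $a\notin \disc(\widehat{g})$, and $\widehat{g}(a)\neq 0$ (because $a\neq \widehat{g}^{-1}(0)$), I choose $\delta>0$ with $\delta<\min(a,1-a)$ such that $J:=(a-\delta,a+\delta)$ is entirely contained in one continuity interval of $\widehat{g}$ and does not meet the unique preimage $\widehat{g}^{-1}(0)$. Because $\widehat{g}$ is a continuous injection on its continuity interval it is strictly monotonic there, so $\widehat{g}(J)$ is a single open interval; it avoids $0$ since $\widehat{g}^{-1}(0)\notin J$, and for $\delta$ small both of its endpoints lie in $(0,1)$ by continuity and $\widehat{g}(a)\in(0,1)$. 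Hence $J$ and $\widehat{g}(J)$ are both $0$-proper.

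Next, the $0$-LBS hypothesis applied to $g$ and $J$ yields $g_0\in BG_0$ with $\widehat{g_0}|_J=\widehat{g}|_J$. Define $g_a:=g_0^{-1}g\in G$. A piecewise continuous representative of $g_a$ is the bijection $\widehat{g_0}^{-1}\circ\widehat{g}$ of $I$, and on $J$ this equals the identity, since for every $x\in J$ one has
\[
\widehat{g_0}^{-1}(\widehat{g}(x)) \;=\; \widehat{g_0}^{-1}(\widehat{g_0}(x)) \;=\; x.
\]
Thus $g_a$ admits a representative that fixes the neighborhood $J$ of $a$, giving $g_a\in B\mathcal{PC}(I)_a\cap G = BG_a$ (using Remark \ref{remBGa}). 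The equality $g_0\,g_a=g$ is immediate from the definition of $g_a$, so the decomposition is obtained.

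The delicate point is the selection in Step 1: one must ensure that $\widehat{g}(J)$ is genuinely a single $0$-proper interval, rather than a more complicated union or one containing $0$. Monotonicity of $\widehat{g}$ on each continuity interval, forced by injectivity, ensures the image is a single interval; and the two exclusions $a\notin\disc(\widehat{g})$ and $a\neq\widehat{g}^{-1}(0)$ permit shrinking $\delta$ enough so that the image stays in $(0,1)$. Once this local setup is secured, the $0$-LBS hypothesis transfers everything to a global element of $BG_0$, and the quotient $g_0^{-1}g$ automatically lands in $BG_a$.
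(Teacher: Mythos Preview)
Your proof is correct and follows essentially the same route as the paper's: choose a small $0$-proper neighborhood $J=V_\delta(a)$ on which $\widehat{g}$ is continuous with $0$-proper image, apply the $0$-LBS property to obtain $g_0\in BG_0$ agreeing with $g$ on $J$, and set $g_a:=g_0^{-1}g$. You supply more detail than the paper does (the monotonicity argument for why $\widehat{g}(J)$ is a single interval, and the explicit shrinking of $\delta$), but the structure is identical; the citation of Remark~\ref{remBGa} is not strictly needed since by definition $B\mathcal{PC}(I)_a$ is the image of $B\widehat{\mathcal{PC}}(I)_a$, so exhibiting any representative fixing $J$ already suffices.
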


\begin{proof} 
Let $g \in G$. As $a\not=0$, $\widehat{g}(a)\not= 0$ and $a\notin \disc(\widehat{g})$, there exists $\delta>0$ such that $\widehat{g}$ is continuous on $V_{\delta}(a)$ and $V_{\delta}(a)$ and $\widehat{g}(V_{\delta}(a))$ are $0$-proper intervals.

Since $G$ is $0$-LBS, there exists $g_0 \in BG_0$ such that \ $\widehat{g}\vert_{V_{\delta}(a)}=\widehat{g_0}\vert_{V_{\delta}(a)}$, thereby \break  $(\widehat{g_0} ^{-1} \circ \widehat{g}) \vert_{V_{\delta}(a)}=\id\vert_{V_{\delta}(a)}$ and then $g_a:=g_0 ^{-1} \circ g \in BG_a$. \end{proof}

\section{The Burago and Ivanov method {\footnotesize{(adapted from Lemma 3.6 and 3.8 of \cite{BI})}}}

\begin{defi}  Let $G$ be a group and $f\in G$. An \textbf{$f$-commutator} is an element of the form $[\tilde f, h]$ for some $h\in G$ and some $\tilde f$ conjugate to $f$ or $f^{-1}$. \end{defi}

\begin{rema}\label{remfcom} 
Any conjugate of an $f$-commutator is an $f$-commutator. All elements of the form %$[f,h],[f^{-1},h]
$[h,f]$ and $[h,f^{-1}]$ are $f$-commutators. Any $f$-commutator is product of $2$ conjugates of $f$ or $f^{-1}$. 
\end{rema}

\begin{prop}\label{BI} Let $G$ be a group of bijections of a space $X$. Let $f \in G$ and $\Omega \subset X$ such that $\Omega$, $f(\Omega)$ and $f^2(\Omega)$ are pairwise disjoint. Let $g_1$, $g_2$ and $k $ be elements of $G$ such that $k(\supp(g_1)\cup \supp(g_2) ) \subset \Omega$ then $[g_1,g_2]$ is a product of two $f$-commutators.
\end{prop}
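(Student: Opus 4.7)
The plan is to reduce to the case $\supp(g_1)\cup\supp(g_2)\subset\Omega$ and then to exhibit two explicit elements $A,B\in G$ whose $f$-commutators multiply to $[g_1,g_2]$. For the reduction, note that $k[g_1,g_2]k^{-1}=[kg_1k^{-1},kg_2k^{-1}]$ and that $\supp(kg_ik^{-1})=k(\supp g_i)\subset\Omega$; since any conjugate of an $f$-commutator is an $f$-commutator (Remark~\ref{remfcom}), it suffices to handle the case $\supp(g_i)\subset\Omega$ and then conjugate the resulting decomposition by $k^{-1}$.

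Write $\tilde h:=fhf^{-1}$ for $h\in G$, so $\tilde g_i$ has support in $f(\Omega)$ and $\tilde{\tilde g_i}:=f^2 g_i f^{-2}$ has support in $f^2(\Omega)$. The crucial technical fact, which underlies the whole computation, is that two bijections with disjoint supports commute. Consequently, any two factors lying in different members of the pairwise disjoint triple $\Omega,\,f(\Omega),\,f^2(\Omega)$ commute, and any product of such factors can be rearranged to isolate its three support-components.

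Now set $A:=g_1\tilde g_2$ and $B:=g_2 g_1^{-1} g_2^{-1}\tilde g_2^{-1}$. Computing $[H,f]=H\tilde H^{-1}$ for $H=A$ and $H=B$ and regrouping by support, one obtains $[A,f]=g_1\cdot(\tilde g_2\tilde g_1^{-1})\cdot\tilde{\tilde g_2}^{-1}$ and $[B,f]=(g_2 g_1^{-1} g_2^{-1})\cdot(\tilde g_1\tilde g_2^{-1})\cdot\tilde{\tilde g_2}$, where in each expression the three factors are supported respectively in $\Omega$, $f(\Omega)$, $f^2(\Omega)$. Multiplying and commuting freely across the three blocks, the $f^2(\Omega)$-parts collapse as $\tilde{\tilde g_2}^{-1}\tilde{\tilde g_2}=\id$, the $f(\Omega)$-parts collapse as $\tilde g_2\tilde g_1^{-1}\tilde g_1\tilde g_2^{-1}=\id$, and the remaining $\Omega$-part is $g_1 g_2 g_1^{-1} g_2^{-1}=[g_1,g_2]$. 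Since both $[A,f]$ and $[B,f]$ are $f$-commutators by Remark~\ref{remfcom}, this proves the claim.

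The main obstacle is discovering the correct formulas for $A$ and $B$. They are found by writing any $[H,f]=H\tilde H^{-1}$ with $H$ supported in $\Omega\cup f(\Omega)$ as a product of three blocks supported in $\Omega$, $f(\Omega)$, $f^2(\Omega)$, and then reverse-engineering the conditions on the would-be product $[A,f][B,f]$ that force the $f(\Omega)$- and $f^2(\Omega)$-components to vanish while the $\Omega$-component equals $[g_1,g_2]$. Once $A$ and $B$ are in hand, the verification is purely formal bookkeeping with disjoint supports.
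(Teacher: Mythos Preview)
Your proof is correct and follows essentially the same approach as the paper: reduce by conjugation to the case $\supp(g_i)\subset\Omega$, then exploit the block decomposition over the three pairwise disjoint sets $\Omega,f(\Omega),f^2(\Omega)$ to write $[g_1,g_2]$ as a product of two explicit $f$-commutators $[H,f]$ with $H$ supported in $\Omega\cup f(\Omega)$. The only cosmetic difference is the particular pair you chose: you take $A=g_1\tilde g_2$ and $B=g_2 g_1^{-1} g_2^{-1}\tilde g_2^{-1}$, whereas the paper (via its Lemma~\ref{fcom} and Remark~\ref{fcoms}) uses $H_1=\tilde g_2\,g_1 g_2$ and $H_2=\tilde g_2^{-1}\,g_1^{-1} g_2^{-1}$; both choices telescope by the same disjoint-support bookkeeping.
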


\begin{proof} Let us recall that $\C_f(w)= fwf^{-1}$. We first prove the following
\begin{lemm} \label{fcom} If $\supp(g_i) \subset \Omega$ for $i=1,2$ then 
$$(\star) \ \ \ \  g_1 g_2 \ \C_f(g_1 ^{-1}) \ \C_{f^2}(g_2 ^{-1})=[  \  \C_{f}( g_2) \ g_1g_2 \ , \ f \ ]$$
is an $f$-commutator.
\end{lemm}

$$\text{Indeed, } \ \ \ \ \ \ \ \ [\ \C_{f}( g_2) \ g_1g_2 \ , \ f \ ] \ =  \ \C_f(g_2) \  g_1 g_2  \ f \ \ \ (g_1 g_2)^{-1} \ \C_f(g_2^{-1}) \  f^{-1} \ \ $$
$$= \ \C_f(g_2)  \ g_1 g_2 \ \C_f((g_1 g_2)^{-1}) \  C_{f^2}(g_2^{-1} ).$$ \hskip 1.5 truecm Since $g_1 g_2$ and $\C_f(g_2)$ have disjoint supports, they  commute and we get $$=\ g_1 g_2 \  \C_f(g_1^{-1}) \  \C_{f^{2}}(g_2^{-1}).$$

\begin{rema}\label{fcoms} Writing $(\star)$  for $g_1^{-1}$ and $g_2^{-1}$, we get that $\ds g_1^{-1} g_2^{-1} \ C_f(g_1) \ C_{f^2}(g_2)$ is also an $f$-commutator. \end{rema}

\medskip
We turn now on to the proof of Proposition \ref{BI}.
Let $g_1,g_2 \in G$  and $k\in G$ such that $k(\cup \supp(g_i) ) \subset \Omega$. The commutator  $[g_1,g_2]$ can be written as $\C_{k^{-1}} ([\C_{k}(g_1),\C_{k}(g_2)])$, with $\C_{k}(g_i)$ of support in $\Omega$. So by Remark \ref{remfcom}, we can suppose w.l.o.g that the $g_i$ have support in $\Omega$ and we have:
$$[g_1,g_2] \ = \ g_1 g_2 \ \ \ \C_{f}(g_1^{-1}) \ \C_{f^2}(g_2^{-1}) \ \ \ \C_{f^2}(g_2) \ \C_{f}(g_1) \ \ g_1^{-1}g_2^{-1}.$$
Since $g_1^{-1} g_2^{-1}$, $\C_f(g_1)$ and $\C_{f^2}(g_2)$ have pairwise disjoint supports, they all commute and we get
$$[g_1,g_2] \ = \ g_1 g_2 \ \C_{f}(g_1^{-1}) \ \C_{f^2}(g_2^{-1}) \ \ \ \ g_1^{-1} g_2^{-1} \ \C_{f}(g_1) \ \C_{f^2}(g_2)$$

Finally, by Lemma \ref{fcom} and Remark \ref{fcoms}, $[g_1,g_2]$ is a product of two $f$-commutators. \end{proof}

%------------------- Sections 4-5---------------------

\section{Uniform simplicity, proof of Theorems \ref{th0} and \ref{th1}} 
We first show two lemmas which ensure that Propositions \ref{UPBG} and \ref{BI} will apply. 

\begin{lemm} \label{lem3.1} 

Let $f\in \mathcal{PC}(I)$ such that $f^2\not =\id$, then there exists $J =J_{\widehat{f}} \subset (0,1)$ such that $J$, $\widehat{f}(J)$ and ${\widehat{f}}\hskip 1truemm ^2(J)$ are pairwise disjoint subintervals.
\end{lemm}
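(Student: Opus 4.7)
The plan is to pick a point $x_0\in(0,1)$ at which $\widehat{f}$ is continuous, at which $\widehat{f}$ is also continuous at $\widehat{f}(x_0)$, and such that the three points $x_0$, $\widehat{f}(x_0)$, $\widehat{f}^2(x_0)$ are pairwise distinct. Then $J$ will be chosen as a sufficiently small open interval around $x_0$, and the disjointness of $J$, $\widehat{f}(J)$, $\widehat{f}^2(J)$ will follow from continuity.

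First, I would introduce the finite bad set
$$D=\disc(\widehat{f})\cup \widehat{f}^{-1}(\disc(\widehat{f}))\cup\{0\},$$
outside of which both $\widehat{f}$ and $\widehat{f}^2$ are continuous. The hypothesis $f^2\neq \id$ in $\mathcal{PC}(I)$ says that the representative $\widehat{f}\circ\widehat{f}$ of $f^2$ cannot differ from the identity on only a finite set, so $\{x\in I : \widehat{f}^2(x)\neq x\}$ is infinite. Removing the finite set $D$ still leaves an infinite, hence nonempty, set, and I would pick any $x_0$ in it.

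Next, I would verify that the three points $x_0$, $\widehat{f}(x_0)$, $\widehat{f}^2(x_0)$ are pairwise distinct: if $\widehat{f}(x_0)=x_0$ then $\widehat{f}^2(x_0)=x_0$, and if $\widehat{f}^2(x_0)=\widehat{f}(x_0)$ then applying $\widehat{f}^{-1}$ yields $\widehat{f}(x_0)=x_0$ and hence again $\widehat{f}^2(x_0)=x_0$. Both possibilities contradict the choice of $x_0$.

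Finally, since $x_0\notin D$, $\widehat{f}$ is continuous on some open neighborhood $U$ of $x_0$ and on some open neighborhood $V$ of $\widehat{f}(x_0)$. I would choose $\epsilon>0$ small enough that $J:=(x_0-\epsilon,x_0+\epsilon)$ satisfies $\overline{J}\subset U\cap(0,1)$ and $\widehat{f}(\overline{J})\subset V$. Because $\widehat{f}$ is a bijection of $I$, its restrictions to $U$ and $V$ are continuous injections of intervals and hence monotonic, so $\widehat{f}(J)$ and $\widehat{f}^2(J)$ are open subintervals of $(0,1)$. A further shrinking of $\epsilon$ so that $J$, $\widehat{f}(J)$, $\widehat{f}^2(J)$ lie respectively in three pairwise disjoint open neighborhoods of $x_0$, $\widehat{f}(x_0)$, $\widehat{f}^2(x_0)$ then delivers the required disjointness. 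The only subtle point is the very first one, namely translating the quotient-level hypothesis $f^2\neq\id$ into the infiniteness of $\{x:\widehat{f}^2(x)\neq x\}$; everything afterwards is a routine continuity argument avoiding the finite bad set $D$.
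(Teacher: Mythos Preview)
Your argument is correct and follows exactly the paper's own approach: the paper observes that since $f^2\neq\id$ in $\mathcal{PC}(I)$ the set $\supp(\widehat{f}^{\,2})$ is infinite, picks a point in it that is a continuity point of both $\widehat{f}$ and $\widehat{f}^{\,2}$, and then invokes ``a standard argument of continuity''---which is precisely what you have written out in detail, including the verification that the three points are pairwise distinct.
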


Indeed, as $f^2\not=\id$ in $\mathcal{PC}(I)$, the support of $\widehat{f}\hskip 1truemm ^2$ is infinite and there exists $a\in \supp(\widehat{f}\hskip 1truemm ^2)$ a continuity point of both $\widehat{f}$ and $\widehat{f}\hskip 1truemm ^2$. The required statement follows from a standard argument of continuity.

\begin{lemm} \label{lem3.2} Let $a\in I$ and $G$ be an $a$-proximal subgroup of $B\mathcal{PC}(I)_a$. Then for all $g,h \in G$ and any subinterval $J$ there exists $k\in G$ such that $\widehat{k}(\supp(\widehat{g}) \cup \supp(\widehat{h})) \subset J$.
\end{lemm}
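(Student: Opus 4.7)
The plan is to combine the two hypotheses on $G$: the fact that $G$ sits inside $B\mathcal{PC}(I)_a$ forces the supports of $\widehat{g}$ and $\widehat{h}$ to stay away from $a$, which will put their union inside an $a$-proper interval; then $a$-proximality will provide the required $k$ directly.

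First I would use Remark \ref{remBGa} to pass from $g,h \in G\subset B\mathcal{PC}(I)_a$ to $\widehat{g},\widehat{h}\in B\widehat{\mathcal{PC}}(I)_a$. By definition of $B\widehat{\mathcal{PC}}(I)_a$, there exist $\delta_g,\delta_h>0$ such that $V_{\delta_g}(a)\subset \fix(\widehat g)$ and $V_{\delta_h}(a)\subset \fix(\widehat h)$. Setting $\delta=\min(\delta_g,\delta_h)>0$ (small enough to fit the definition of $V_\delta(a)$), we obtain
\[
\supp(\widehat{g})\cup \supp(\widehat{h}) \ \subset \ I\setminus V_{\delta}(a).
\]

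Next I would check that $K:=I\setminus V_\delta(a)$ is itself an $a$-proper interval, treating the two cases allowed by the definition. If $a=0$, then $V_\delta(0)=[0,\delta)\cup(1-\delta,1)$, so $K=[\delta,1-\delta]$ is an interval with endpoints $\delta,1-\delta$ satisfying $0<\delta<1-\delta<1$, hence $0$-proper. If $a\in(0,1)$, then $V_\delta(a)=(a-\delta,a+\delta)$, so $K=[0,a-\delta]\cup[a+\delta,1)$ is the disjoint union of a left-closed interval with endpoints $\{0,a-\delta\}$ and an interval with endpoints $\{a+\delta,1\}$, with $0<a-\delta<a<a+\delta<1$, matching the third bullet of the definition of $a$-proper interval.

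Finally, the $a$-proximality assumption gives, in particular, $(a,J)$-proximality for the given subinterval $J\subset I$. Applied to the $a$-proper interval $K$, it yields $k\in G$ with $\widehat{k}(K)\subset J$. Since $\supp(\widehat{g})\cup \supp(\widehat{h})\subset K$, we conclude
\[
\widehat{k}\bigl(\supp(\widehat{g})\cup \supp(\widehat{h})\bigr)\ \subset\ \widehat{k}(K)\ \subset\ J,
\]
which is the statement. I do not expect a real obstacle here; the only mildly delicate point is the bookkeeping of verifying that the complement of a symmetric neighbourhood of $a$ fits the somewhat case-by-case definition of an $a$-proper interval, which is settled by the above dichotomy.
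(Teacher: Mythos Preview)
Your proof is correct and follows essentially the same approach as the paper: find an $a$-proper interval $K$ containing both supports (using that $\widehat{g},\widehat{h}\in B\widehat{\mathcal{PC}}(I)_a$) and then apply $a$-proximality to map $K$ into $J$. The only difference is that you spell out explicitly, via the case split $a=0$ versus $a\in(0,1)$, why $I\setminus V_\delta(a)$ matches the formal definition of an $a$-proper interval, whereas the paper simply asserts the existence of such a $K$.
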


\begin{proof} Arguments are analogous to ones of the proof of Proposition \ref{UPBG}. However for completeness we reproduce them. As $\widehat{g},\widehat{h} \in B\widehat{\mathcal{PC}}(I)_a$, there exists an $a$-proper interval $K$ that contains both supports of $\widehat{g}$ and $\widehat{h}$. By the $a$-proximality of $G$, there exists $k\in G$ such that $\widehat{k}$ sends  $K$ into $J$.  
\end{proof}

\subsection{Proof of Theorem \ref{th0}.} Let $G <B\mathcal{PC}(I)_a$ and $f, \phi\in G\setminus\{\id\}$. 

If $f$ is not an involution, by Lemma \ref{lem3.1}, there exists $J\subset (0,1)$ such that $J$, $\widehat{f}(J)$ and $\widehat{f} \hskip 1truemm ^2(J)$ are pairwise disjoint intervals.

Since $G$ is perfect, $\phi\in [G,G]$. Thus, $G$ being $a$-proximal, Proposition \ref{UPBG} (changing $J$ for $\widehat{f}(J)$) implies that  $\phi$ is a product of $2$ commutators $[g_i,h_i]$, $i=1,2$.

In addition, by Lemma \ref{lem3.2}, the interval $J$ and the maps $\widehat{g_i}$ and $\widehat{h_i}$ satisfy the hypotheses of Proposition \ref{BI}, hence their commutator is a product of two $\widehat{f}$-commutators and applying the quotient morphism, each $[g_i,h_i]$ is a product of two $f$-commutators. Then $\phi$ is a product of $4$ $f$-commutators. As any $f$-commutator is a product of $2$ conjugates of $f$ or $f^{-1}$, we finally get that $\phi$ is a product of $8$ conjugates of $f$ or $f^{-1}$.

\medskip

If $f$ is an involution, NCI-property implies that there exists $h\in G$ such that $F=f\circ (h f h^{-1})$ is not an involution. Applying the previous case to $F$ we get that $\phi$ is the product of $8$ conjugates of $F$ or $F^{-1}$ that is a product of $16$ conjugates of $f=f^{-1}$. 

\medskip

\subsection{Proof of Theorem \ref{th1}.} \label{Pth1} 

W.l.o.g, we can suppose that $a=0$ and  we begin by proving 
\begin{lemm} \label{regorb} 
If $BG_0$ is perfect and $0$-proximal then for every $a=\widehat{g}(0)\in G_{reg}(0)$ it holds that $BG_a = g BG_0 g^{-1}$ is perfect and $a$-proximal.
\end{lemm}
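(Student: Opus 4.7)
The plan is to transport the structure of $BG_0$ to $BG_a$ via conjugation by $g$, using that $\widehat g$ is locally a continuous bijection between a neighborhood of $0$ and a neighborhood of $a$. First I would establish $BG_a = g BG_0 g^{-1}$. Since $a = \widehat g(0) \in G_{reg}(0)$, the map $\widehat g$ is continuous at $0$; as $\widehat g$ has only finitely many discontinuities, we can choose $\eta>0$ such that $\widehat g$ restricts to a continuous bijection from $V_\eta(0)$ onto a neighborhood $V_{\eta'}(a)$. For $h \in BG_0$ with $\widehat h=\id$ on some $V_\delta(0)$, further shrink $\eta$ so $V_\eta(0)\subset V_\delta(0)$; then for every $y \in V_{\eta'}(a)$ the point $(\widehat g)^{-1}(y)$ lies in $V_\delta(0)$, so the composition $\widehat g \circ \widehat h \circ (\widehat g)^{-1}$ is identity on $V_{\eta'}(a)$. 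This composition is a representative of $ghg^{-1}$ lying in $B\widehat{\mathcal{PC}}(I)_a$, so by Remark \ref{remBGa} we have $ghg^{-1}\in BG_a$. The reverse inclusion is symmetric, using that $(\widehat g)^{-1}$ is continuous at $a$ with value $0$.

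Perfection of $BG_a$ is then immediate, since conjugation by $g$ realizes a group isomorphism $BG_0 \to BG_a$ and perfection is preserved under isomorphism. For $a$-proximality, given an $a$-proper interval $K$ and a subinterval $J \subset I$, I would produce the required element as $k = g k_0 g^{-1}$. Pick $\eta>0$ with $\widehat g$ continuous on $V_\eta(0)$ and $\widehat g(V_\eta(0))$ contained in the neighborhood of $a$ that $K$ avoids; bijectivity of $\widehat g$ then forces $(\widehat g)^{-1}(K) \subset K_0 := [\eta,1-\eta]$, a $0$-proper interval. For $J$, choose $y_0$ in the interior of $J$ whose preimage $x_0 = (\widehat g)^{-1}(y_0)$ lies outside the finite set $\disc(\widehat g)$, and pick a small subinterval $J_0\ni x_0$ in the continuity piece of $\widehat g$ around $x_0$ with $\widehat g(J_0)\subset J$. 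Applying $0$-proximality of $BG_0$ to $K_0$ and $J_0$ yields $k_0\in BG_0$ with $\widehat{k_0}(K_0)\subset J_0$, and setting $k := g k_0 g^{-1} \in BG_a$ we compute
$$\widehat k(K) \subset \widehat g\bigl(\widehat{k_0}((\widehat g)^{-1}(K))\bigr) \subset \widehat g(J_0) \subset J.$$

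The main technical obstacle is the bookkeeping of finite-point discrepancies that appear because the assignment $f\mapsto\widehat f$ is not a morphism on $\mathcal{PC}(I)$: strictly speaking, $\widehat{gk_0g^{-1}}$ and $\widehat g \circ \widehat{k_0} \circ (\widehat g)^{-1}$ may differ on a finite set, as may $(\widehat g)^{-1}$ and $\widehat{g^{-1}}$. These exceptional points do not affect class-level statements (membership in $B\mathcal{PC}(I)_a$ by Remark \ref{remBGa}, or support containment as in Lemma \ref{lem3.2}), so the conclusion should be understood in the class-level sense under which $a$-proximality is actually used throughout the paper.
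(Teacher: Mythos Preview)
Your argument is correct and follows essentially the same route as the paper: establish $BG_a=gBG_0g^{-1}$ by a direct conjugation argument using continuity of $\widehat g$ at $0$, deduce perfectness from the isomorphism, and transport $0$-proximality to $a$-proximality by pulling $K$ back through $\widehat g^{-1}$ to a $0$-proper set, choosing a subinterval $J_0$ with $\widehat g(J_0)\subset J$, and conjugating the resulting $k_0$. The only differences are cosmetic: you prove the inclusion $gBG_0g^{-1}\subset BG_a$ first while the paper proves the reverse first, and you are more explicit than the paper both in constructing $J_0$ (the paper simply takes ``$J'$ a subinterval of $\widehat g^{-1}(J)$'') and in flagging the finite-set discrepancy between $\widehat{gk_0g^{-1}}$ and $\widehat g\circ\widehat{k_0}\circ(\widehat g)^{-1}$, which the paper leaves implicit.
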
 

Indeed, we first prove that $BG_a = g BG_0 g^{-1}$, this will immediately imply that $BG_a$ is perfect. Let $f\in BG_a$, by definition $\supp(\widehat{f}) \subset I\setminus V_{\delta} (a)$ for some positive $\delta$. By the continuity of $\widehat{g}$ at $0$ there exists $\eta>0$ such that $\widehat{g}(V_{\eta} (0)) \subset V_{\delta} (\widehat{g}(0))= V_{\delta} (a)$, thereby $I\setminus V_{\delta} (a) \subset \widehat{g}(I\setminus V_{\eta} (0))$. Therefore $\supp(\widehat{g}^{-1} \circ \widehat{f} \circ \widehat{g}) =\widehat{g}^{-1}(\supp(\widehat{f})) \subset I\setminus V_{\eta} (0)$, then $g^{-1} \circ f \circ g \in BG_0$ and finally $f\in g BG_0 g^{-1}$. The other inclusion is obtained by interchanging the roles of $a$ and $0$.

Suppose now that $BG_0$ is $0$-proximal. Let $J$ be a subinterval of $I$, $J'$ be a subinterval of $\widehat{g}^{-1}(J)$ and $K_a$ be an $a$-proper interval. Therefore, as previously, $\widehat{g}^{-1}(K_a) \subset I\setminus V_{\eta} (0)$ for some positive $\eta$. We conclude from the $0$-proximality of $G$ that there exists $k_0$ such that $\widehat{k}_0( I\setminus V_{\eta} (0)) \subset J'$, hence that $\widehat{k}_0 \circ \widehat{g}^{-1}(K_a) \subset  J'$ and finally that  $\widehat{g} \circ \widehat{k}_0 \circ \widehat{g}^{-1}(K_a) \subset \widehat{g}(J') \subset J$ with  $g \circ k_0 \circ g^{-1}\in BG_a$.

\medskip

We turn now on to the proof of Theorem \ref{th1}. Let $G <\mathcal{PC}(I)$ and $f,\phi\in G\setminus\{\id\}$.

By Hypothesis (1), the regular $G$-orbit of $0$ is infinite, therefore it contains some point $a \notin\{0, \widehat{\phi}^{-1}(0)\}\cup \disc (\widehat{\phi})$. Since $G$ is $0$-LBS, Lemma \ref{dec} implies that there exist $\phi_0 \in BG_0$ and $\phi_a \in BG_a$ such that $\phi=\phi_0 \phi_a$.

We claim that $\phi= g_0 b_a$ with $g_0 \in BG_0$ and $b_a$ a commutator in $BG_a$.
\begin{quote} {\footnotesize{ \ 

\vskip -3truemm
Indeed, by the definition of $BG_a$, there exist $K_a$ an $a$-proper interval and $\delta>0$ such that $\supp(\widehat{\phi_a}) \subset K_a \subset I\setminus V_{\delta}(a)$.

According to Remark \ref{regorb}, the group $BG_a$ is $a$-proximal. Then given any $\eta>0$, there exists $k_a \in BG_a$ such that $\widehat{k_a} (K_a)\subset I \setminus V_\eta(0)$. Therefore $\supp(C_{\widehat{k_a}}(\widehat{\phi_a})) = \widehat{k_a}(\supp(\widehat{\phi_a})) \subset  I\setminus V_{\eta}(0)$ and then $C_{k_a}(\phi_a) \in BG_0$. Finally $$\ds \phi=\phi_0 \phi_a= \phi_0 C_{k_a}(\phi_a) C_{k_a}(\phi_a^{-1})\phi_a =g_0 b_a\text{ with}
\left\{ \begin{array}{ll}
g_0=\phi_0 C_{k_a}(\phi_a) \in BG_0 \text{ and} \cr
b_a=C_{k_a}(\phi_a^{-1})\phi_a \text{ a commutator in } BG_a.
\end{array}\right.$$
}} \end{quote}

\smallskip

As $BG_0$ is perfect and $0$-proximal, it is non trivial. Hence, Remark 3 ensures that it contains some $f'$ that is not an involution.

In addition, as $BG_0$ is $0$-proximal then Lemma \ref{lem3.1} and Proposition \ref{UPBG} (changing $J$ for $\widehat{f'}(J)$) imply that $g_0$ is a product of $2$ commutators in $BG_0$. Therefore $\phi$ is a product of $2$ commutators in $BG_0$ and one commutator in $BG_a$ for some $a\in I$.

If $f$ is not an involution, applying Lemma \ref{lem3.2} to $BG_0$ and $BG_a$ (which is $a$-proximal by Remark \ref{regorb}) and Proposition \ref{BI} to $G$, we obtain that each such commutator is a product of two $f$-commutators. As every $f$-commutator is a product of $2$ conjugates of $f$ or $f^{-1}$, we finally get that $\phi$ is a product of $12$ conjugates of $f$ or $f^{-1}$. 

\medskip

If $f$ is an involution, by NCI-property, there exists $h\in G$ such that $F=f\circ (h f h^{-1})$ is not an involution. Applying the previous case to $F$, we get that $\phi$ is the product of $12$ conjugates of $F$ or $F^{-1}$ that is a product of $24$ conjugates of $f$. \hfill $\square$

\section{Proof of Corollaries.}

In this section, we check that many groups satisfy the hypothesis of Theorem \ref{th1}.

According to Arnoux and Lacourte $\mathcal{PC}^+(I)$, $\mathcal{PC}(I)$ and $\mathcal A(I)$ are simple (see \cite{Ar1} III Proposition 1.7 and \cite{Lac} Theorem 1.4). In Section \ref{appendix1} we will prove that $\mathcal A^+(I)$ is simple. 

Epstein (\cite{Ep}) established that $PL^+(\mathbb S^1)$ is simple and Thompson showed that $T_2$ is simple (see e.g. \cite{CFP}) and in Section \ref{appendix2} we will prove that the Stein-Thompson groups $T_{(n_1, n_2, ..., n_p)}$ with $n_2= n_1^{2k}-1$ are simple.

\smallskip 

All the groups previously mentioned are infinite, hence they are NCI, by Remark \ref{remNCI}. It is easy to check that they  also are $0$-LBS, the regular $G$-orbit of $0$ is infinite and have an associated $BG_0$ which is $0$-proximal.

\smallskip 

It remains  to prove that the corresponding $BG_0$ are perfect. 

\noindent If $G=PL^+(\mathbb S^1)$ then $BPL^+(\mathbb S^1)_0=[PL^+(I),PL^+(I)]$ is perfect, by Epstein (\cite{Ep}).

\noindent If $G=T_2$ then by Theorem 4.1 of \cite{CFP} and related comments, $(BT_2)_0=[F_2,F_2]$ is perfect. If $G=T_{(n_1, n_2, ..., n_p)}$ with $n_2= n_1^{2k}-1$, this is provided by Lemma \ref{lemma :3}.

Finally, let $G\in \{ \mathcal{PC}(I), \mathcal{PC}^+(I), \mathcal A(I), \mathcal A^+(I)\}$ and $f_0$ in $BG_0$. There exist $c,d$ with $0<c<d<1$ such that $\supp(f_0) \subset [c,d)$. The group $G([c,d))$ of elements of $G$ with support in $[c,d)$ is isomorphic to $G$  which is a simple group. In particular, $G([c,d))$ is perfect and we get that $f_0$ is a product of 
commutators of elements in $G$ having support in $[c,d)$, hence of elements in $BG_0$.

\section{ $\mathcal A^+(I)$ is simple}\label{appendix1}

\subsection{Preliminaries.} 

The aim of this section is to fix notation and terminology, to collect a few results and to prove some basic results to be used for establishing the simplicity of $\mathcal A^+(I)$. In particular, we describe the conjugacy classes of involutions in $\mathcal A^+(I)$.

\medskip

Definition \ref{defAIET} can be extended to every half open real interval $J$ (see the appendix by P. Arnoux) and the corresponding groups are denoted by $ PL^+(J) <PL^+(\mathbb S_J) <\mathcal A^+(J)$, where $\mathbb S_J$ is the circle obtained by identifying the endpoints of $J$ and $PL^+(J)$ is identified with the stabilizer of the left endpoint of $J$ in  $PL^+(\mathbb S_J)$. It is plain that $PL^+(\mathbb S_J)$ is isomorphic to $PL^+(\mathbb S^1)$.

\begin{defi} \

$\bullet$ An IET that has at most one interior discontinuity point is called a {\bf rotation} and it is denoted by $R_a$, where $a$ is the image of $0$.

$\bullet$ An IET $g$ whose support is a half-open interval $J=[a,b)\subset[0,1)$ is a {\bf restricted rotation} if the direct affine map that sends $J$ to $[0,1)$ conjugates $g_{\vert J}$ to a rotation. We denote it by $R_{\alpha,J}$, where $\alpha$ is defined by $R_{\alpha,J} (x) =x+\alpha  \ (mod \ \vert b-a \vert)$ for $x\in J$. 
\end{defi}

\begin{lemm}\label{clcoin}  \

Every non trivial involution $i\in \mathcal A^+(I)$ is conjugated in $\mathcal A^+(I)$ to either $R_{\frac 1 2}$ or to $RR_{\frac 1 2}$ the order $2$ restricted rotation of support $[{\frac 1 2},1)$ that exchanges $[{\frac 1 2},{\frac 3 4})$ and $[{\frac 3 4}, 1)$.
\end{lemm}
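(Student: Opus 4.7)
The plan is to analyze the piecewise structure of $i$, split into two cases according to whether the support of $i$ has full Lebesgue measure, and in each case exhibit an explicit AIET conjugating $i$ to the appropriate model.

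Structural step. Let $\hat i$ be the best representative of $i$. By refining its partition into continuity intervals if necessary (adding to $\disc(\hat i)$ finitely many auxiliary points so that $\hat i$ sends endpoints of pieces to endpoints of pieces, using $\disc(\hat i)\cup \hat i(\disc(\hat i))$ and one-sided limits at discontinuities), one obtains a partition $[0,1)=L_1\sqcup\cdots\sqcup L_N$ into half-open intervals on each of which $\hat i$ is continuous and affine with positive slope, and such that $\hat i(L_k)$ is again one of the $L_j$'s. Writing $\hat i(L_k)=L_{\sigma(k)}$, the permutation $\sigma$ satisfies $\sigma^2=\id$ because $i^2=\id$ in $\mathcal A^+(I)$. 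A fixed piece $\sigma(k)=k$ forces $\hat i|_{L_k}$ to be an affine involution $L_k\to L_k$ with positive slope, hence the identity (since $\lambda^2=1$ with $\lambda>0$ gives $\lambda=1$, and a translation fixing a bounded interval as a set must be trivial). The non-fixed pieces then come in pairs $\{L_k,L_{k'}\}$ exchanged affinely, with reciprocal positive slopes $\lambda$ on $L_k$ and $1/\lambda$ on $L_{k'}$. Denote by $F$ the union of the identity pieces and by $S$ the union of the paired pieces (so $S=\supp(i)$ up to finitely many points). The dichotomy is whether $F=\emptyset$ or not.

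Constructing $\phi$. In the first case ($F=\emptyset$), index the pairs as $(K_l,K_l')$ with $\hat i(K_l)=K_l'$, $l=1,\dots,n$, choose any partition of $[0,\tfrac12)$ into half-open intervals $M_1,\dots,M_n$, and define $\phi$ to be the direct affine bijection $K_l\to M_l$ on each $K_l$, extended to $K_l'$ by the rule $\phi(\hat i(x))=\phi(x)+\tfrac12$ for $x\in K_l$. A short computation shows that this extension is the unique direct affine bijection $K_l'\to M_l+\tfrac12$, whose slope equals the slope of $\phi|_{K_l}$ divided by $\lambda_l>0$; hence $\phi\in\mathcal A^+(I)$, and the conjugation identity $\phi\hat i\phi^{-1}=R_{\frac12}$ holds piece by piece. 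In the second case ($F\neq\emptyset$), repeat the same construction on $S$, but with $M_1,\dots,M_n$ now partitioning $[\tfrac12,\tfrac34)$ and the shift $+\tfrac14$ in place of $+\tfrac12$, so that $\phi$ sends $K_l'\to M_l+\tfrac14\subset[\tfrac34,1)$; and define $\phi|_F$ to be any AIET bijection $F\to[0,\tfrac12)$, obtained simply by concatenating the intervals of $F$ after affine rescaling (possible because $F$ is a finite union of half-open intervals of total length in $(0,1)$). Gluing the two partial maps produces $\phi\in\mathcal A^+(I)$ with $\phi\hat i\phi^{-1}=RR_{\frac12}$.

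The delicate point is the structural step: one has to make sure that, after a sufficiently fine finite refinement of the partition, $\hat i$ really does permute the pieces, so that the ``identity piece vs.\ paired piece'' alternative is exhaustive. This requires a little care with the interplay between $\disc(\hat i)$ and $\disc(\hat i^{-1})$ in the quotient $\mathcal A^+(I)$, where $\hat i$ and $\hat i^{-1}$ agree only up to a finite set. Once that structural result is in hand, the two constructions above are routine bookkeeping with direct affine maps; the essential flexibility used is that slopes of elements of $\mathcal A^+(I)$ may be arbitrary positive reals, which is precisely what lets the prescribed $\phi$ exist irrespective of how unequal the lengths of paired pieces may be.
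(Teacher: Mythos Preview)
Your argument is correct and follows essentially the same strategy as the paper's proof: decompose $I$ into finitely many half-open intervals permuted by the involution, separate the pointwise-fixed pieces from the exchanged pairs, and build an explicit AIET conjugacy sending each pair to two pieces that differ by the appropriate translation ($+\tfrac12$ or $+\tfrac14$). The paper carries this out by labelling the $2k$ non-fixed intervals so that $i(I_j)=I_{j+k}$ and mapping $I_j$ affinely onto equal-length pieces $[\tfrac{j-1}{p},\tfrac{j}{p})$ (shifted by $\tfrac12$ when $\fix(i)\neq\emptyset$); a slope computation then shows the conjugate is an IET and in fact the claimed rotation. Your variant---choosing an arbitrary partition $M_1,\dots,M_n$ of the target half and forcing $\phi(\hat i(x))=\phi(x)+\text{shift}$---is the same construction up to an inessential choice of target interval lengths, and your verification that the induced map on $K_l'$ is the direct affine bijection onto $M_l+\text{shift}$ is exactly the slope check the paper alludes to.
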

\begin{proof}
As $i$ is a non trivial involution, the interval $I$ can be decomposed into a finite union of pairwise disjoint half-open intervals: $I_1, \cdots , I_p$ and $J_1, \cdots , J_q$ satisfying the following: 
\begin{enumerate}
\item The map $i$ is continuous on these intervals. 
\item The integers $p$ and $q$ are such that \ $p=2k \geq 2$, \ $q\geq 0$ and in the case that $q=0$ there is no $J_j$.
\item $J_j \subset Fix(i)$ and $i(I_{j})= I_{j+k}$.
\end{enumerate}
 
\medskip
 
Let $H$  be the AIET defined by:

\begin{itemize}
\item Whenever $q\not=0$, the map $H$ sends affinely $J_j$ to $[{\frac{j-1}{2q}}, {\frac{j}{2q}})$ for $j=1, \cdots , q$.

\item $H$ sends affinely $I_j$ to \ \ $\left\{ \begin{array}{cc}
[ \frac{j-1}{p}, \frac{j}{p}) &{\text { for } } j=1, \cdots , p {\ \text { if } } q =0,   \cr
[{\frac 1 2}+\frac{j-1}{2p}, {\frac 1 2}+\frac{j}{2p})  &{\text { for } } j=1, \cdots , p  {\ \text { if }  } q\not=0. \end{array}\right.$
\end{itemize}
 
\smallskip
 
We can check that $H$ conjugates $i$ to a map with support $[0,1)$ if  $q =0$ or $[{\frac 1 2},1)$ if $q\not=0$ which also is an IET (this can be verified by computing the slope of $H\circ i \circ H^{-1}$ on each $H(I_j)$). Moreover by definition, $H\circ i \circ H^{-1}$ sends any two cyclic-consecutive intervals among the $H(I_{j}),\  j=1, \cdots , k$ to cyclic-consecutive ones so it is continuous except at ${\frac 1 2}$ if  $q =0$ and  at ${\frac 1 2}$ and ${\frac 3 4}$ if $q \not=0$. 

\smallskip

In conclusion, $H\circ i \circ H^{-1}=R_{\frac 1 2}$ if $q =0$ or $H=RR_{\frac 1 2}$ if $q \not=0$.  \end{proof}

%------------------- A simple 

\subsection{The group $\boldsymbol{\mathcal A^+(I)}$ is perfect and generated by its involutions}  \ 

We first exhibit generators of $\mathcal A^+(I)$.

\begin{prop} \label{PLAIET} \ 

Every $f \in \mathcal A^+(I)$ can be written as $f= g\circ h$ with $h\in PL^+(I)$ and $g$ an IET.
\end{prop}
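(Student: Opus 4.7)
The plan is to construct $h$ explicitly from the affine data of $f$, so that $g:=f\circ h^{-1}$ automatically has all slopes equal to $1$.

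Fix a subdivision $0=a_1<a_2<\dots<a_p<a_{p+1}=1$ adapted to $f$, so that on each $[a_i,a_{i+1})$ we have $f(x)=\lambda_i x+\beta_i$ with $\lambda_i\in\mathbb R^+$. Write $\ell_i=a_{i+1}-a_i$ for the domain lengths and $\ell_i'=\lambda_i\ell_i$ for the image lengths; since $f$ is a bijection of $[0,1)$ onto itself, the intervals $f([a_i,a_{i+1}))$ partition $[0,1)$, hence $\sum_{i=1}^{p}\ell_i'=1$. Define partial sums
$$c_0=0,\qquad c_i=\sum_{j=1}^{i}\ell_j'\quad(1\leq i\leq p),$$
so that $c_p=1$ and the intervals $[c_{i-1},c_i)$ form a partition of $[0,1)$ with $\vert[c_{i-1},c_i)\vert=\ell_i'$.

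Next I would define $h\colon[0,1)\to[0,1)$ by letting $h\vert_{[a_i,a_{i+1})}$ be the unique increasing affine bijection from $[a_i,a_{i+1})$ onto $[c_{i-1},c_i)$. Its slope on the $i$-th piece is $\ell_i'/\ell_i=\lambda_i>0$. The key verification is that $h$ is continuous, hence a PL homeomorphism: at each interior breakpoint $a_{i+1}$ the left limit equals $c_i$ (endpoint of the previous image interval) and the right value is $h(a_{i+1})=c_i$ (left endpoint of the next image interval); at the endpoints one checks $h(0)=0$ and $\lim_{x\to 1^-}h(x)=c_p=1$. Thus $h\in PL^+(I)$.

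Finally, set $g=f\circ h^{-1}$. On each $[c_{i-1},c_i)$, the map $h^{-1}$ is affine onto $[a_i,a_{i+1})$ with slope $1/\lambda_i$, and $f$ is affine on $[a_i,a_{i+1})$ with slope $\lambda_i$; composing gives slope $1$ on every piece. Therefore $g$ is an increasing piecewise isometry of $[0,1)$ sending the intervals $[c_{i-1},c_i)$ bijectively onto $f([a_i,a_{i+1}))$, which together partition $[0,1)$. In particular $g$ is a bijection of $[0,1)$ with all slopes equal to $1$, i.e.\ $g\in\mathcal G^+(I)$, and $f=g\circ h$.

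I do not anticipate a serious obstacle: the construction is forced once one demands that $h$ carry each domain piece of $f$ to an interval of length $\ell_i'$ in the natural left-to-right order. The only delicate check is the continuity of $h$ at the interior breakpoints, which is built into the choice $c_i=\sum_{j\leq i}\ell_j'$ together with the fact that $\sum\ell_j'=1$ (itself coming from $f$ being a bijection).
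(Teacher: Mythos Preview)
Your proof is correct and follows essentially the same approach as the paper: both separate the ``stretching'' from the ``shuffling'' so that one factor is a PL homeomorphism and the other an IET, yielding the same decomposition $f=g\circ h$. The only cosmetic difference is the entry point---the paper first defines the IET $E$ (rearranging the image intervals $J_{\pi(i)}=f(I_i)$ back into order) and then sets $h=E\circ f$, whereas you first build $h$ explicitly and then set $g=f\circ h^{-1}$; the resulting $h$ and $g=E^{-1}$ coincide.
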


\begin{proof}
Let $f \in \mathcal A^+(I)$, we denote by $I_1, \cdots, I_p$ the maximal continuity intervals of $f$ and we denote by  $J_{\pi(i)}$ the interval $f(I_i)$. We consider the IET $E$ defined by the partition $\{J_i\}$ and the permutation $\pi^{-1}$ that tells us how the $J_i$ are rearranged. By construction, the AIET $h=E\circ f$ is continuous on $I$ and $f= E^{-1} \circ h$ has the required form.
\end{proof}

According to \cite{Ar1}, \cite{No} or \cite{Vo} (see the appendix for a proof), any interval exchange transformation $g$ is a product of restricted rotations. Therefore, Proposition \ref{PLAIET} insures that every $f \in \mathcal A^+(I)$ is a product of commutators [resp. involutions] if this property holds for any $h \in PL ^+(\mathbb S^1)$.

Indeed, $PL ^+(I)$ is a subgroup of $PL^+(\mathbb S^1)$. In addition, the map $f\mapsto f \vert_J$ sends the restricted rotations of support $J$ into $PL ^+(\mathbb S_J)$ and it is an isomorphism onto its image, the subgroup of $PL ^+(\mathbb S_J)$ consisting of its rotations. Therefore, writing a restricted rotation of support $J$ as product of commutators [resp. involutions] reduces to do that for a rotation in the group $PL ^+(\mathbb S_J)$ which is isomorphic to $PL^+(\mathbb S^1)$.

\smallskip

As Theorem 3.2 of \cite{Ep} states that $PL^+(\mathbb S^1)$ is simple, $PL^+(\mathbb S^1)$ is generated by either its commutators or its involutions, so $$\mathcal A^+(I)= \langle \text{\footnotesize{commutators}} \rangle=\langle \text{\footnotesize{involutions}}\rangle.$$

\subsection{The group $\boldsymbol{\mathcal A^+(I)}$ is simple} \ 

\smallskip

Let $N$ be a non trivial normal subgroup of $\mathcal A^+(I)$. The problem reduces to prove that $N$ contains a fix point free involution $\tau_1$ and a non trivial involution $\tau_2$ having fix points since $\mathcal A^+(I)=\langle \text{\footnotesize{involutions}} \rangle$ will be the normal closure of $\langle \tau_1, \tau_2 \rangle$, by Lemma \ref{clcoin}.

\medskip

Let $f$ be a non trivial element of $N$, then there exists a non empty half-open interval $J$ such that $f(J)\cap J =\emptyset$ and $J$ and $f(J)$ have length less than ${\frac 1 2}$.

\smallskip

Let $i\in \mathcal A^+(I)$ be an involution with support $supp(i)=J$. Therefore $\supp(f \circ i \circ f^{-1})=f(\supp(i)) =f(J)$ is disjoint from $\supp(i)$. Consequently $f \circ i \circ f^{-1}$ and $i$ commute, hence $\tau_1 = [f,i]=f \circ i \circ f^{-1} \circ i $ is an involution of support $J\cup f(J)$ and it belongs to $N$. Then we have proved that $N$ contains a non trivial involution $\tau_1$ having fixed points.

\medskip

For constructing a fix point free involution in $N$, we consider $ h_1, h_2$ in $\mathcal A^+(I)$ such that $$(\star) 
\ \ \ h_1(J)=[0,{\frac 1 4}), \ \  h_1(f(J))=[{\frac 1 2 },{\frac 3 4}), \ \ h_2(J)=[ {\frac 1 4},{\frac 1 2}) \text{ and } \ \  h_2(f(J))=[{\frac 3 4 }, 1).$$

Note that, for $j=1,2$, the map $f_j=h_j \circ f \circ h_j^{-1}$ belongs to $N$ and satisfies $f_j(J_j) \cap J_j = \emptyset$,  where $J_j=h_j(J)$.

\smallskip

As in the previous case, we consider involutions $i_j$ for $j=1,2$ such that $\supp(i_j)=J_j$ and we get that $[f_j,i_j]$ is an involution of support $J_j \cup f_j(J_j)$ and it belongs to $N$.

\smallskip

From $(\star)$, we deduce $\supp([f_1, i_1]) = [0, {\frac 1 4}) \cup [{\frac 1 2 }, {\frac 3 4})$ and $\supp( [f_2, i_2]) =[ {\frac 1 4},  {\frac 1 2}) \cup [{\frac 3 4 }, 1)$.  Finally,  we obtain that $\tau_2=[f_1, i_1] [f_2, i_2] \in N $ is an involution of full support. Then we have also proved that $N$ contains a fix point free involution $ \tau_2$.

\section{Simplicity of certain Stein-Thompson groups}\label{appendix2}

In this section, we prove Theorem \ref{ThStTh}, using results of Stein (\cite{Ste}) and Bieri-Strebel's Lemma C12.8 and Theorem C12.14 of \cite{BiSt} that, in our context, can be stated as
\begin{theor}\label{BSBperf} The group $(B\TS)_0=\{f\in \TS : f(0)=0, f(1)=1,$ $Df(0)=Df(1)=1\}$ is perfect provided that the following properties $(i)$ and $(ii)$ hold.
\begin{enumerate}
\item[$\boldsymbol{(i)}$] \ $(1-\Lambda)A =A$, where $(1-\Lambda)A = \left\{ \  \sum \ (1- \lambda_i) a_i  \ ,  \ \lambda_i\in \Lambda= \langle n_i \rangle, \ a_i\in A=\mathbb Z[\Lambda] \ \right\}$. 
\item[$\boldsymbol{(ii)}$] \ $\Lambda$ contains a rational number $\frac{n}{q}> 1$ so that $n^2-q^2\in \langle n_i \rangle$.
\end{enumerate}
\end{theor}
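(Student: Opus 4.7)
The plan is to follow Bieri-Strebel's strategy of computing the abelianization of $(B\TS)_0$ explicitly and showing that hypotheses $(i)$ and $(ii)$ together force it to be trivial. First, I would identify $(B\TS)_0$ with Bieri-Strebel's bounded PL group (call it $B(I;A,\Lambda)_0$) for $A=\mathbb Z[\Lambda]$ and $\Lambda=\langle n_1,\dots,n_p\rangle$: the PL homeomorphisms of $[0,1]$ whose breakpoints lie in $A$, whose slopes lie in $\Lambda$, and whose one-sided derivatives at $0$ and at $1$ both equal $1$. This identification is direct from the definitions and puts us in the exact setting of the quoted Bieri-Strebel theorem.

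Second, I would invoke the character construction of Lemma C12.8 of [BS16], which produces a well-defined homomorphism
$$\chi : (B\TS)_0 \longrightarrow A/(1-\Lambda)A, \qquad \chi(f)=\sum_i(1-\lambda_i)b_i,$$
where $b_i$ runs over the breakpoints of $f$ and $\lambda_i$ is the slope immediately to the right of $b_i$. The crucial formal property is that $\chi$ is a group homomorphism with abelian target, so it factors through the abelianization of $(B\TS)_0$ and in particular vanishes on every commutator. At this point hypothesis $(i)$ enters in a trivial-looking but decisive way: it asserts $(1-\Lambda)A=A$, so the target group $A/(1-\Lambda)A$ is zero, and $\chi$ is automatically the zero map.

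Third, the essential content of Theorem C12.14 is the converse statement that $\ker\chi=[(B\TS)_0,(B\TS)_0]$, and this is where hypothesis $(ii)$ is indispensable. One must produce, for each $f\in\ker\chi$, an explicit factorization of $f$ as a product of commutators in $(B\TS)_0$. The rational number $n/q>1$ belonging to $\Lambda$ with $n^{2}-q^{2}\in\Lambda$ is used to construct a concrete PL element (a two-slope map interpolating $n/q$ and its conjugate data) whose conjugation action realizes the elementary rearrangements of breakpoints needed to cancel an arbitrary element of $\ker\chi$ against commutators. Combining the two steps, the abelianization of $(B\TS)_0$ is $A/(1-\Lambda)A=0$, so the group is perfect.

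The main obstacle is precisely the third step: the commutator decomposition of a generic $\chi$-trivial element. This is a PL analogue of the classical $\mathrm{SL}_2$-style identity expressing a diagonal element $\operatorname{diag}(t,t^{-1})$ as a commutator of unipotents, and the arithmetic condition $n^{2}-q^{2}\in\Lambda$ is engineered to make this identity realizable inside $(B\TS)_0$: it provides exactly the divisibility needed to align slopes on both sides of an auxiliary breakpoint so that the resulting map stays in the group. All other ingredients are formal once the character $\chi$ and its vanishing target are set up.
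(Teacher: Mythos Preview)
The paper does not prove this statement at all: it is quoted verbatim as Bieri--Strebel's Lemma C12.8 and Theorem C12.14 from \cite{BiSt} and used as a black box to verify, in Lemmas~\ref{lemma :1} and~\ref{lemma :3}, that the hypotheses $(i)$ and $(ii)$ hold for the specific Stein--Thompson data $\{n_1,n_1^{2k}-1,\dots,n_p\}$. There is therefore no ``paper's own proof'' to compare your attempt against.

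Your outline is a reasonable high-level sketch of how the Bieri--Strebel argument is organized (character $\chi$ to $A/(1-\Lambda)A$, triviality of the target under $(i)$, and the arithmetic input $(ii)$ to realize the commutator decomposition of elements of $\ker\chi$), but as written it is a plan rather than a proof: the third step, which you yourself flag as the main obstacle, is only described in analogy with $\mathrm{SL}_2$ identities and is not carried out. If your intent was to supply a self-contained proof of the cited theorem, that step would need to be made explicit; if your intent was merely to explain why the paper invokes it, then the appropriate response is simply to cite \cite{BiSt} as the paper does.
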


\begin{lemm} \label{lemma :1}
Let $\Lambda= \langle n_i \rangle$, $A=\mathbb Z[\Lambda]$ and $d =\gcd (n_i-1)$. Then  $\ds (1-\Lambda)A= dA.$
\end{lemm}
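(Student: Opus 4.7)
The plan is to prove the two inclusions $(1-\Lambda)A \subseteq dA$ and $dA \subseteq (1-\Lambda)A$ separately, exploiting the fact that both sets are ideals of the ring $A = \mathbb{Z}[\Lambda]$ (closure of $(1-\Lambda)A$ under multiplication by $A$ follows immediately from the definition, since $b \sum (1-\lambda_i)a_i = \sum (1-\lambda_i)(b a_i)$).

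For the inclusion $(1-\Lambda)A \subseteq dA$, I would first reduce to showing that $1-\lambda \in dA$ for every $\lambda \in \Lambda$. Since $d \mid (n_i - 1)$ for each generator, this holds when $\lambda = n_i$. To pass to arbitrary $\lambda = \prod n_i^{k_i}$, I would use the multiplicative identity $1 - xy = (1-x) + x(1-y)$ and argue by induction on the word length in the $n_i^{\pm 1}$; the step for a negative exponent uses $1 - n_i^{-1} = -n_i^{-1}(n_i - 1)$, and crucially $n_i^{-1} \in A$ since $A = \mathbb{Z}[\Lambda]$ contains $\Lambda$.

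For the reverse inclusion $dA \subseteq (1-\Lambda)A$, the definition $d = \gcd(n_i - 1)$ yields, by Bezout's identity, integers $c_i$ with $d = \sum c_i (n_i - 1) = -\sum c_i(1 - n_i)$, so $d \in (1-\Lambda)A$. Multiplying by an arbitrary $a \in A$ and using that $(1-\Lambda)A$ is an ideal then gives $dA \subseteq (1-\Lambda)A$.

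The main obstacle, modest as it is, is the inductive step for products of the $n_i^{\pm 1}$: one must be careful to handle negative exponents uniformly, which is where the hypothesis $A = \mathbb{Z}[\Lambda]$ (and not merely $\mathbb{Z}[n_1,\dots,n_p]$) is used. Everything else is a direct ring-theoretic manipulation plus one application of Bezout.
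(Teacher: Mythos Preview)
Your proof is correct and follows essentially the same route as the paper: both arguments establish the two inclusions separately, invoking B\'ezout for $dA\subseteq (1-\Lambda)A$, and for $(1-\Lambda)A\subseteq dA$ both reduce to $1-\lambda\in dA$ for all $\lambda\in\Lambda$ (the paper clears denominators and uses $n_i\equiv 1\pmod d$, while you telescope via $1-xy=(1-x)+x(1-y)$, which is the same idea). One trivial slip: $1-n_i^{-1}=n_i^{-1}(n_i-1)$, not $-n_i^{-1}(n_i-1)$, but this does not affect the argument.
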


\begin{proof} \ 

\noindent First, we prove the inclusion $(1-\Lambda)A \subset dA$.
 
By the definition of $(1-\Lambda)A $, it suffices to show that $(1-\lambda) a \in dA$, for any $a\in A$ and $\lambda = n_1^{s_1} ...n_p^{s_p} \in \Lambda$. By converting the fractions to have the same denominator,  there exist $q_i, t_i\in \mathbb N$ and $a'\in A$ such that $$(1-\lambda)a = (1- n_1^{s_1} ...n_p ^{s_p})a = (n_1^{q_1} ...n_p ^{q_p}- n_1^{t_1} ...n_p ^{t_p}) a'.$$  

By replacing the $n_i$'s by $k_i d +1$ and developing, we obtain $(1-\lambda) a = (dN) a' $ with $N\in \mathbb{N}$.

\smallskip

\noindent Next, we show that $dA\subset (1-\Lambda)A$.  

From Bezout's identity, we obtain $d = u_1(n_1-1) + \dots + u_p(n_p-1)$ with $u_i \in \mathbb{Z}$. 

Thus, for any $a\in A$, we have $da = \sum (n_i-1) (u_i a) \in (1-\Lambda)A$. \end{proof}

%\smallskip

\begin{lemm}  \label{lemma :3} 
$(B\TS)_0$ is perfect provided that $\{ n_i \} = \{ n_1, n_1^{2k}-1,n_3, \cdots , n_p \}$.
\end{lemm}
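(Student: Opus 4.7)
The plan is to verify directly the two hypotheses $(i)$ and $(ii)$ of the cited theorem of Bieri-Strebel and then invoke it to conclude that $(B\TS)_0$ is perfect.

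\smallskip

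\textbf{Step 1: reduction of hypothesis $(i)$ via Lemma \ref{lemma :1}.} Lemma \ref{lemma :1} identifies $(1-\Lambda)A$ with $dA$, where $d=\gcd_i(n_i-1)$. Consequently, the condition $(1-\Lambda)A=A$ is equivalent to $d=1$ (since $dA=A$ forces $1\in dA$, hence $d$ is a unit of $\mathbb Z$, i.e.\ $d=1$; the converse is obvious). So it suffices to show that $\gcd_i(n_i-1)=1$ under the hypothesis $\{n_i\}=\{n_1,\,n_1^{2k}-1,\,n_3,\dots,n_p\}$.

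\smallskip

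\textbf{Step 2: computing the gcd.} Among the generators, two in particular give $n_1-1$ and $(n_1^{2k}-1)-1=n_1^{2k}-2$. Reducing modulo $n_1-1$, one has $n_1\equiv 1$ and therefore $n_1^{2k}-2\equiv 1-2=-1\pmod{n_1-1}$, so $\gcd(n_1-1,\,n_1^{2k}-2)=1$. A fortiori $d=\gcd_i(n_i-1)=1$, which proves hypothesis $(i)$.

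\smallskip

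\textbf{Step 3: hypothesis $(ii)$.} Take $n=n_1^k$ and $q=1$. Then $\tfrac{n}{q}=n_1^k$ belongs to $\Lambda=\langle n_1,n_1^{2k}-1,n_3,\dots,n_p\rangle$ and is $>1$, and by construction
\[
n^2-q^2 \;=\; n_1^{2k}-1 \;\in\;\Lambda.
\]
This provides the required rational number, so hypothesis $(ii)$ also holds.

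\smallskip

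\textbf{Step 4: conclusion.} Both hypotheses of Theorem C12.14 of [BS16] being satisfied, it yields directly that $(B\TS)_0$ is perfect. The only genuine content is the arithmetic choice that makes $n^2-q^2$ land in $\Lambda$ while simultaneously forcing $\gcd(n_i-1)=1$; the exponent $2k$ in $n_1^{2k}-1$ is tailored precisely for this (the factorization $n_1^{2k}-1=(n_1^k-1)(n_1^k+1)$ is what matches the form $n^2-q^2$ with $n=n_1^k$, $q=1$). There is no real obstacle here once Lemma \ref{lemma :1} is available; the statement follows by a short verification.
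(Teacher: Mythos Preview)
Your proof is correct and follows essentially the same route as the paper: reduce $(i)$ to $d=1$ via Lemma~\ref{lemma :1}, check $\gcd(n_1-1,\,n_1^{2k}-2)=1$, and verify $(ii)$ with $n=n_1^k$, $q=1$. One small remark: in Step~1 the implication ``$1\in dA$ hence $d$ is a unit of $\mathbb Z$'' is not immediate for a general $d$ and $A=\mathbb Z[\Lambda]$, but it holds here because $d\mid n_i-1$ forces $\gcd(d,n_i)=1$ for every $i$, so $d$ is not invertible in $A$ unless $d=1$.
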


\begin{proof} We check that $(B\TS)_0$ satisfies the properties $(i)$ and $(ii)$ of Bieri and Strebel's Theorem. Indeed, by Lemma \ref{lemma :1}, the property $(i)$ is equivalent to that $d=1$ which follows from the fact that $gcd(n_1 -1, n_1^{2k}-2)=1$. % $n_1 =dN - 1, n_1^{2k} = d M +1 (binome), d|1, d=1$ 

Moreover, considering $\ds \frac{n}{q}=\frac{n_1^k}{1}$ yields $n^2-q^2= n_1^{2k}-1 \in \Lambda$, so $(ii)$ holds for $(B\TS)_0$. \end{proof}

\begin{lemm} \label{lemma :2}
If $(B\TS)_0$ is perfect then $\TS$ is perfect.
\end{lemm}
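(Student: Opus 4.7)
The plan is to establish the set-theoretic equality
\[
\TS = [\TS,\TS] \cdot (B\TS)_0,
\]
from which the hypothesis $(B\TS)_0 = [(B\TS)_0,(B\TS)_0] \subset [\TS,\TS]$ yields $\TS = [\TS,\TS]$ immediately.

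To verify this decomposition, given $f \in \TS$ I would construct $c \in [\TS,\TS]$ such that $c^{-1}f \in (B\TS)_0$, that is, such that $c^{-1}f$ fixes $0$ and $1$ with derivative $1$ on both sides. Equivalently, $c$ must match the germ of $f$ at $0 \in \mathbb{S}^1 = [0,1)/\sim$, the relevant data of which consist of $p = f(0) \in A$ together with the one-sided derivatives $Df(0^{\pm}) \in \Lambda$. The construction proceeds in two stages, handling $p$ first and then the derivatives.

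First, I would match $f(0)=p$ by producing a rotation $R_p \in \TS$ as a product of commutators. The key observation is that rotations mutually commute, so whenever a dilation-type element $h \in \TS$ satisfies $h R_p h^{-1} = R_{p'}$ with $p' \neq p$, the commutator $[h, R_p] = R_{p'-p}$ is itself a rotation realized as a commutator; exploiting the non-trivial action of $\Lambda$ on $A$ one can iterate this construction to express every $R_p$ as a product of commutators in $\TS$, possibly absorbing a tail into $(B\TS)_0$. After multiplying $f$ by $R_{-p}$ we may then assume $f(0)=0$. The derivatives $Df(0^{\pm}) \in \Lambda$ are then killed by an analogous argument: one-sided dilations around $0 \in \mathbb{S}^1$ with prescribed $Df(0^{\pm})$ are also exhibited as commutators in $\TS$ via a dilation-conjugation trick, and after left-multiplying $f$ by the appropriate such commutator, what remains is an element of $(B\TS)_0$.

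The main obstacle is the first stage: producing arbitrary rotations $R_p$, $p\in A$, as explicit commutators in $\TS$ while keeping all intermediate breakpoints in $A$ and slopes in $\Lambda$. This requires carefully exploiting the arithmetic action of $\Lambda$ on $A$, which is tacitly present in the hypothesis that $(B\TS)_0$ is perfect; the required auxiliary elements must be chosen so that every residual discrepancy in the rotation identity can be pushed into $(B\TS)_0$ and then absorbed using its perfectness.
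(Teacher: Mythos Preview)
Your strategy --- realize the germ of $f$ at $0\in\mathbb{S}^1$ by an explicit element of $[\TS,\TS]$ so that the remainder lies in $(B\TS)_0$ --- has a genuine gap at precisely the step you flag as the main obstacle. The mechanism you propose is flawed as written: if $h\in\TS$ satisfies $hR_ph^{-1}=R_{p'}$ then, since rotation number is a conjugacy invariant in $\homeo^+(\mathbb{S}^1)$, necessarily $p'=p$ and hence $[h,R_p]=\id$; your identity ``$[h,R_p]=R_{p'-p}$'' therefore produces no nontrivial rotation. The underlying intuition (that a dilation by $\lambda$ should carry $R_p$ to $R_{\lambda p}$) is correct only at the level of germs at $0$, and turning it into an honest statement of the form $[h,R_p]\in R_q\cdot(B\TS)_0$ with $q\neq 0$ requires controlling simultaneously the value and both one-sided derivatives of $[h,R_p]$ at $0$ --- work that is asserted but nowhere carried out. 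The second stage, for the one-sided derivatives, is likewise only sketched.

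The paper's proof bypasses all of this and is much shorter. Using that $\TS$ is $0$-LBS together with Lemma~\ref{dec}, one writes any $f\in\TS$ as $f=f_0f_a$ with $f_0\in(B\TS)_0$ and $f_a\in(B\TS)_a$, for some $a\in A\setminus\{0,f^{-1}(0)\}$. The elementary identity $(B\TS)_a=R_a\,(B\TS)_0\,R_a^{-1}$ (cf.\ the proof of Lemma~\ref{regorb}) transfers the perfectness hypothesis from $(B\TS)_0$ to $(B\TS)_a$, so each factor already lies in a perfect subgroup of $\TS$ and is therefore a product of commutators. No rotation or dilation need ever be exhibited explicitly as a commutator.
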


\begin{proof} Let $f \in \TS$. As $\TS$ is a  $0$-LBS group, Lemma \ref{dec} with $a \in A\setminus \{0, f^{-1}(0)\}$ implies that $f= f_0 f_a$ with $f_0 \in (B\TS)_0$ and $f_a \in (B\TS)_a$.

It is a simple matter to prove that $(B\TS)_a= R_a (B\TS)_0 R_a^{-1}$ (see the proof of Lemma \ref{regorb}) and Lemma \ref{lemma :3} now implies that both $(B\TS)_0$ and $(B\TS)_a$ are perfect. We conclude that $f= f_0 f_a$ is a product of commutators in $\TS$ and finally that $\TS$ is perfect. 
\end{proof}

\medskip

We turn now on to the proof of the simplicity of $\TS$. According to \cite{Ste}, the group $\DDT$ is simple and $\DDT= \TS$ by the previous lemma, so we have that $\TS$ is simple.

%%%%%%%%%%%%%%%%%%%%%%%%%%%%%%%%%%%%%%%%%%%%%%%%%%%%%%%%%%%%%%%

\begin{appendix} %\appendixpage
\section{Simplicity of groups of interval exchange tranformations}
\begin{center} by Pierre Arnoux \end{center}

\bigskip

In this appendix, we prove the simplicity of some groups of interval exchange transformations; these results were obtained in \cite{Ar1}, but are not easily available. Recall the definitions:

\begin{definition} An interval exchange transformation on an interval $J=[a,b)$ is a bijection of $J$ which is everywhere right continuous, and, except on a finite number of points, continuous and derivable with derivative $1$\textcolor{red}{;} alternatively, it can be defined as a permutation by translations on a finite collection of semi-open subintervals of $J$.
 
More generally, an affine (resp. generalised) interval exchange transformation is a bijection defined by a finite partition of half open intervals, such that the restriction of the map to each interval is an orientation preserving affine map (resp. an orientation preserving homeomorphism).
 
An interval exchange transformation with flips is a bijection on $J$, except maybe for a finite set, which is derivable except for this finite set, with derivative $+1$ or $-1$. As noted in the introduction, it is defined up to a finite set.
\end{definition}

From now on, we fix an interval $J$. As before, we denote by $\iet$ the group of interval exchange transformations on the interval $J$, by $\ieta$ [resp. $\ietg$]  the group of affine [resp. generalised] interval exchanges transformations and  by $\ietf$ the group of classes of interval exchange transformations with flips.
 
In this appendix, we prove the following : 

\begin{proposition}\label{simpgrp}
The groups $\ieta, \ietg$ and  $\ietf$ are simple. The group $\iet$ is not simple, but its commutator subgroup is simple.
\end{proposition}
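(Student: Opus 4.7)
The proof naturally splits into two parts. First, for the group $\iet$, the plan is to invoke the Sah-Arnoux-Fathi invariant $\saf: \iet \to \R \wedge_\Q \R$, a well-defined group homomorphism that is not identically zero. This immediately rules out simplicity of $\iet$, and shows that $[\iet,\iet] \subseteq \ker(\saf)$. The reverse inclusion can be checked by exhibiting, for each element of $\ker(\saf)$, an explicit product of commutators of restricted rotations realizing it; this reduces the first half of the statement to proving that $[\iet,\iet]$ itself is simple. That last task will then follow the same template as the other three simplicity claims.

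For each of the four groups $G \in \{[\iet,\iet], \ieta, \ietg, \ietf\}$ that we want to show is simple, I would take a non-trivial normal subgroup $N \trianglelefteq G$ together with some $f \in N \setminus \{\id\}$, and aim to show that $N$ contains enough small-support elements to generate $G$. The first step is a displacement lemma: since $f \neq \id$, a standard continuity argument on the complement of the finite discontinuity set produces an arbitrarily small half-open subinterval $K \subset J$ with $f(K) \cap K = \emptyset$.

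The second step is the commutator trick. For any $g \in G$ with $\supp(g) \subset K$, the element $[f,g] = f g f^{-1} g^{-1}$ belongs to $N$; it coincides with $f g f^{-1}$ on $f(K)$, with $g^{-1}$ on $K$, and with the identity elsewhere. Hence $N$ contains a faithful copy of the subgroup of $G$ consisting of elements with support in the arbitrarily small interval $K$, doubled on the disjoint interval $f(K)$. Using the strong transitivity of $G$ on half-open subintervals (by conjugating with restricted rotations, IETs, or PL maps according to the case), we then spread this small-support subgroup inside $N$ over any prescribed location in $J$.

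The final step is a generation argument, tailored to each group. For $\ieta$, Proposition \ref{PLAIET} reduces matters to $\iet$ and $PL^+(J)$, both of which are generated by restricted rotations and PL maps with arbitrarily small supports; for $\ietg$ the analogous decomposition uses small-support generalized restricted rotations; for $\ietf$, the group is generated by flip involutions, which can be directly produced in $N$ by the procedure above; and for $[\iet,\iet]$, one checks that small-support IETs with vanishing $\saf$ already generate the commutator subgroup. The main obstacle is this last generation step, especially in the $[\iet,\iet]$ case, where one must simultaneously control the SAF invariant of every commutator produced and ensure that the resulting small-support elements nevertheless span the entire kernel of $\saf$; the three other cases are genuinely easier thanks to the added flexibility of affine slopes, generalized homeomorphisms, or flip involutions.
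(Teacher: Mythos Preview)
Your outline shares the paper's essential ingredients --- a displacement lemma, the commutator $[f,g]$ for $g$ supported in a displaced interval, and a small-support generation step --- but the organization diverges at a point where your version has a genuine gap.

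From the commutator trick you correctly obtain that $N$ contains every element of the form $[f,g]$ with $\supp(g)\subset K$; these act as $g^{-1}$ on $K$ and as $fgf^{-1}$ on $f(K)$, and are the identity elsewhere. You call this a ``faithful copy \ldots\ doubled on $f(K)$'' and then assert that, after conjugating around, these doubled elements generate $G$. That last step is not justified and is not obvious: the $K$-part and the $f(K)$-part of $[f,g]$ are rigidly linked (the map $g\mapsto [f,g]$ is a group embedding into $N$), and it is unclear how to decouple them to produce, say, an arbitrary single flip, an arbitrary restricted rotation, or an arbitrary PL homeomorphism with small support inside $N$. The paper bypasses this difficulty with Epstein's lemma: instead of trying to put generators of $G$ into $N$, one shows directly that every commutator $[g_1,g_2]$ of small-support elements lies in $N$, via the double-commutator identity $[g_1,g_2]=[g_1,[g_2,h']]$ with $h'$ a conjugate of $f$ that displaces both supports. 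This gives $[G,G]\subset N$ for \emph{every} nontrivial normal $N$, and simplicity of $\ieta,\ietg,\ietf$ then follows from a \emph{separate} proof that each of these groups is perfect (rotations and PL maps are shown to be products of commutators). Your merged argument would need to supply the missing decoupling step, which in practice amounts to rediscovering the double-commutator trick.

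For $[\iet,\iet]$ you correctly flag the main obstacle but do not resolve it. The paper's key device is an $\saf$-correction: write $f\in[\iet,\iet]$ as a product $g_1\cdots g_n$ of small-support IETs (which need not individually have vanishing $\saf$), choose $h_i$ with small support disjoint from all the $g_k$ and with $\saf(h_i)=\saf(g_i)$, and regroup as $f=(g_1h_1^{-1})\cdots(g_nh_n^{-1})\,k$ with $k=h_n\cdots h_1$. Each factor now has vanishing $\saf$, hence lies in $[\iet,\iet]$, and all have small support. This is what makes Epstein's lemma applicable to $[\iet,\iet]$ itself; your sentence ``one checks that small-support IETs with vanishing $\saf$ already generate'' hides precisely this construction.
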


The proof of the proposition consists, using a lemma due to Epstein, in proving first that the commutator subgroup of all these groups is the smallest normal subgroup, and then, for the first three, in proving that they are perfect.

%%%%%%%%%%%%%%%%%%%%%%%%%%%%%%%%%%%%%%%%%%%%%%%%%%%%%%%%%%%%%%%
\subsection{A condition implying that every normal subgroup contains the commutator subgroup}

Recall that two transformations with disjoint support commute.

Remark that, if $H$ is a normal subgroup of a group $G$, and $h\in H$, then for all $a\in G$, $[a,h]=aha^{-1} h^{-1} $ is in $H$, as product of two elements of $H$: $aha^{-1}$ which is a conjugate of $h$, hence in $H$ by normality, and the inverse of $h$. Remark also that, if $a$ commutes with $c$, then $[a,bc]=abca^{-1}c^{-1}b^{-1}= [a,b]$.
We will use these properties to prove the following lemma, due to Epstein \cite{E68}

\begin{lemma} \label{lemmepst}
Let $G$ be a group of transformations of a manifold endowed with a measure $\mu$. Suppose that $G$ satisfies the two conditions: 
\begin{enumerate}
\item For all $\epsilon>0$, any element of $G$ is the product of a finite number of elements whose support has measure less than $\epsilon$.
\item For all $h\in G\setminus \{\id\}$, there exist $E\subset \supp(h)$ such that $h(E)\cap E=\emptyset$, and $\epsilon >0$ such that, if $g_1$ and $g_2$ are two elements of $G$ whose support has measure less than $\epsilon$, we can find $f\in G$ such that $f(\supp(g_i))\subset E$ for $i=1,2$.

Then $[G,G]$ is the smallest normal subgroup of $G$.
\end{enumerate}
\end{lemma}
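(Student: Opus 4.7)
The plan is to fix a non-trivial normal subgroup $N \triangleleft G$ and a non-trivial $h \in N$, together with the set $E \subset \supp(h)$ and threshold $\epsilon > 0$ provided by condition (2), and then show that every commutator of $G$ lies in $N$. Because $[G,G]$ is always normal, this suffices.

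\textbf{Step 1: reduce to commutators of small-support elements.} Using the identity $[xy,z] = x[y,z]x^{-1}\cdot [x,z]$ applied in each slot, any commutator $[g_1,g_2]$ unfolds into a product of conjugates of commutators $[a,b]$ where $a,b$ are factors of $g_1$ and $g_2$. By condition (1) I may take those factors to have support of measure less than $\epsilon$. Since $N$ is normal, it is enough to show that each such $[a,b]$ belongs to $N$.

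\textbf{Step 2: move the supports into $E$.} By condition (2), there is $f \in G$ with $f(\supp(a)) \cup f(\supp(b)) \subset E$. Setting $a' = faf^{-1}$ and $b' = fbf^{-1}$, one has $[a,b] = f^{-1}[a',b']f$, so by normality of $N$ it suffices to prove $[a',b'] \in N$ under the extra assumption that $\supp(a'), \supp(b') \subset E$.

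\textbf{Step 3: the key commutator identity.} Set $k = h\, a'^{-1} h^{-1}$, whose support lies in $h(E)$ and is therefore disjoint from $E$. Consequently $k$ commutes with $a'$, with $b'$, and with $[a',b']$ (whose support is contained in $E$). A short direct computation then gives
\[
[[h, a'^{-1}],\, b'] \;=\; [k a',\, b'] \;=\; k\,[a',b']\,k^{-1} \;=\; [a',b'].
\]
Now $[h, a'^{-1}] = h\,(a'^{-1} h^{-1} a')$ is a product of two elements of $N$ (namely $h$ and the conjugate $a'^{-1} h^{-1} a'$ of $h^{-1}$), so $[h,a'^{-1}] \in N$. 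Hence $[a',b']$ is a commutator $[\eta, b']$ with $\eta \in N$, and therefore lies in $N$. This closes the argument.

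The main obstacle is locating the right identity in Step 3: one must find an expression whose outer commutator automatically lands in $N$ while its inner structure exactly reproduces $[a',b']$. The crucial geometric input is that $h(E)$ and $E$ are disjoint, which forces all the ``nuisance'' factors produced by the identity to commute with the objects of interest and thus cancel.
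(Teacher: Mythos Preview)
Your proof is correct and follows essentially the same approach as the paper. The only cosmetic difference is that you conjugate the small-support elements into $E$ and keep $h$ fixed, arriving at $[a',b']=[[h,a'^{-1}],b']$, whereas the paper keeps $g_1,g_2$ fixed and conjugates $h$ to $h'=f^{-1}hf$, obtaining $[g_1,g_2]=[g_1,[g_2,h']]$; these are mirror images of the same disjoint-support trick.
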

\begin{proof}Let $H$ be a normal subgroup of $G$; we want to prove that any commutator belongs to $H$. Let $h$ be a non trivial element of $H$, and let $E$ and $\epsilon$ be as in condition (2). By condition (1), it is enough to prove that the commutator of two elements $g_1, g_2$ with support of measure less than $\epsilon$ belongs to $H$.

Let $f$ be as in condition 2, and $h'=f^{-1}hf$. We have $h'\in H$ by normality. If $S_i$, for $i=1,2$, is the support of $g_i$, one checks that $h'(S_i)\subset f^{-1}(h(E))$ is disjoint from $S_1\cup S_2\subset f^{-1}(E)$. This implies that $g_1$ and the conjugate $h'g_2^{-1}h'^{-1}$ of the inverse of $g_2$ have disjoint support, hence commute. This fact, and the remarks above, imply that $$[g_1,g_2]=[g_1, g_2h'g_2^{-1}h'^{-1}]= [g_1,[g_2,h']]\in  H$$

We have proved that the commutator of any element with small support belongs to $H$; since these elements generate $G$, the group of commutators is included in $H$.
\end{proof}

%%%%%%%%%%%%%%%%%%%%%%%%%%%%%%%%%%%%%%%%%%%%%%%%%%%%%%%%%%%%%%%
\subsection{Interval exchange transformations are product of transformations with small support}

We will prove that, in all the considered groups of interval exchange transformations, any element can be written as a product of a finite number of elements with a support of arbitrarily small size.

Now, we list some definitions and properties  that are easily available in \cite{GL21} and we add proofs for sake of completeness.

\begin{definition} Let $\alpha, \beta \in \overline{J}=[a,b]$ and  $0\leq \theta < \beta-\alpha$.

The \textbf{symmetry} of $[\alpha,\beta)$, denoted by $\mathcal I_{[\alpha,\beta)}$,  is the element of $\mathcal G(J)$ represented by the FIET $i=\widehat{\mathcal I_{[\alpha,\beta)}}$ given by  $i(x)=x \ \text{ if } \ \ x\notin (\alpha,\beta) \ \ \ \ \text{  and } \ \ \  i(x)=\alpha+\beta-x \ \text{ if } \  \ x\in (\alpha,\beta).$

A \textbf{distinguished involution} is a product of finitely many symmetries having disjoint supports.
\end{definition}

\begin{remark}\label{Stheta}
Let $\theta \in [0,1)$, set $R_\theta=R_{\theta, [0,1)}$ and $S_\theta=\mathcal I_{[0,\theta)} \circ \mathcal I_{(\theta,1)}$, it is easy to check that $S_\theta \circ S_{\theta'}=R_{\theta-\theta'}$ and $R_{\alpha} \circ S_{\theta} \circ R_{\alpha}^{-1} = S_{\theta + 2\alpha}$.
\end{remark}
\medskip

{\unitlength=0,35 mm
\begin{picture}(95,95)
\put(0,10){\textcolor{gray}{\line(1,0){95}}}
\put(10,0){\textcolor{gray}{\line(0,1){95}}}
\put (12, 85){$\widehat{\mathcal I_{[\alpha,\beta)}}$}
\put(0,0){$\scriptstyle a$} \put(95, 8){$\scriptstyle \vert$} \put(95,0){$\scriptstyle b$}
\put(44, 8){$\scriptstyle \vert$}
\put(38,0){$\scriptstyle  \alpha$}
\put(69, 8){$\scriptstyle \vert$}
\put(72,0){$\scriptstyle  \beta$}
\put (9,9){\textcolor{red}{$\scriptstyle  \bullet$}}
\put (42,42){\textcolor{red}{$\scriptstyle  \bullet$}}
\put (69,69){\textcolor{red}{$\scriptstyle  \bullet$}}
% L'homéo
\put(10,10){\line(1,1){35}}
\put(45,70){\line(1,-1){25}}
\put(70,70){\line(1,1){25}}
\end{picture}}
{\unitlength=0,35 mm
\hskip 1.5 truecm  \begin{picture}(95,95)
\put(0,10){\textcolor{gray}{\line(1,0){95}}}
\put(10,0){\textcolor{gray}{\line(0,1){95}}}
\put (12, 85){$\widehat{\mathcal I_{[\alpha,b)}}$}
\put(0,0){$\scriptstyle a$}
\put(44, 8){$\scriptstyle \vert$}
\put(38,0){$\scriptstyle  \alpha$}
\put(95, 8){$\scriptstyle \vert$}
\put(95,0){$\scriptstyle  b$}
\put (9,9){\textcolor{red}{$\scriptstyle  \bullet$}}
\put (42,42){\textcolor{red}{$\scriptstyle  \bullet$}}
% L'homéo
\put(10,10){\line(1,1){35}}
\put(45,95){\line(1,-1){50}}
%\put(70,70){\line(1,1){25}}
\end{picture}}
{\unitlength=0,35 mm
\hskip 2 truecm  
\begin{picture}(95,95)
\put(0,10){\textcolor{gray}{\line(1,0){95}}}
\put(10,0){\textcolor{gray}{\line(0,1){95}}}
\put(12,85){$\widehat{S_{\theta}}$}

\put(0,0){$\scriptstyle 0$}
\put(54, 8){$\scriptstyle \vert$}
\put(52,0){$\scriptstyle \theta$}
\put(93, 8){$\scriptstyle \vert$}
\put(95,0){$\scriptstyle 1$}
\put(9,9){\textcolor{red}{$\scriptstyle  \bullet$}}
\put (53,53){\textcolor{red}{$\scriptstyle  \bullet$}}
 %L'homéo
%\put(10,10){\line(1,1){35}}
\put(10,55){\line(1,-1){46}}
\put(95,55){\line(-1,1){39}}
\end{picture}}

\begin{lemma}\label{Lem6} Every element of $\iet$ is the product of a finite number of restricted rotations. Every element of $\ietf$ is the product of a distinguished involution and an element of $\iet$.
\end{lemma}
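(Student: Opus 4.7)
The plan is to prove the two statements of the lemma separately: the first by induction on the number of continuity intervals, the second by an explicit ``unflipping'' construction using symmetries on the image intervals.

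\textbf{First statement.} For $f\in\iet$ with $n$ continuity intervals, I will show that $f$ is a product of restricted rotations by induction on $n$. The base case $n=1$ is immediate, since such an $f$ acts on all of $J$ as a single translation modulo the length of $J$, hence is itself a restricted rotation of support $J$. For the inductive step, let $I_1=[a,a_1)\subset J$ be the leftmost continuity interval of $f$ and set $J_1=f(I_1)$, a subinterval of $J$ of the same length. I would then take the rotation $R$ of the whole interval $J$ that carries $J_1$ onto $I_1$ pointwise: concretely, if $f|_{I_1}$ is the translation by $c$, set $R=R_{-c,J}$. Then $R\circ f$ restricts to the identity on $I_1$ and, on the smaller half-open interval $[a_1,b)$, restricts to an IET with at most $n-1$ continuity intervals. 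By the inductive hypothesis, this restriction is a product of restricted rotations supported in subintervals of $[a_1,b)$; extending each by the identity on $I_1$ yields restricted rotations on $J$. Finally, $f=R^{-1}\circ (R\circ f)$ where $R^{-1}=R_{c,J}$ is itself a restricted rotation, so $f$ is a product of restricted rotations as required.

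\textbf{Second statement.} Let $f\in\ietf$ and fix a good representative $\widehat{f}$. Let $K$ denote the finite collection of continuity intervals of $\widehat{f}$ on which the slope equals $-1$ (the flipped intervals), and for each $I\in K$ set $J_I=\widehat{f}(I)$. Since the images $\widehat{f}(I)$ form (up to a finite set) a partition of $J$, the intervals $\{J_I\}_{I\in K}$ are pairwise disjoint. Define $i=\prod_{I\in K}\mathcal I_{J_I}$, which is a distinguished involution by construction. On each flipped interval $I\in K$, the composition $i\circ\widehat{f}$ first reverses orientation via $\widehat{f}$ and then reverses it back via $\mathcal I_{J_I}$, so the net slope is $+1$; on each non-flipped interval $I$, the symmetry $i$ acts trivially on $\widehat{f}(I)$ since $\supp(i)$ is disjoint from it. Hence $i\circ f$ lies in $\iet$, and $f=i\circ(i\circ f)$ expresses $f$ as the required product of a distinguished involution and an IET.

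\textbf{Main obstacle.} The delicate point lies in the inductive step of the first part: one must check with care that the rotation $R_{-c,J}$ of the entire $J$ really sends $J_1$ onto $I_1$ as a set (and not merely by a translation that escapes $J$), and that the restriction of $R\circ f$ to $[a_1,b)$ is a genuine IET with strictly fewer intervals. Both are routine but depend on the cyclic arithmetic on $J$ and on the right-continuity conventions in the definition. The second statement is essentially transparent once one observes that the images of the flipped intervals are automatically pairwise disjoint as members of the image partition of $\widehat{f}$, so the product of the corresponding symmetries is well-defined as a distinguished involution.
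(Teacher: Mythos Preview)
Your proof is correct and follows essentially the same strategy as the paper's. The only cosmetic differences are that in the first part you use a rotation of the full interval $J$ at each step, whereas the paper uses the smaller restricted rotation $R_{K,L}$ with $K=J_1\cup\cdots\cup J_{\pi(1)-1}$ and $L=J_{\pi(1)}$; and in the second part you compose with symmetries on the \emph{image} intervals $J_I=\widehat f(I)$, whereas the paper composes on the \emph{domain} intervals $I_i$ (writing $f\circ\prod_{i\in F}\mathcal I_{I_i}\in\iet$). Both variants work for the same reason, and the count of continuity intervals drops exactly as you claim since the single interior discontinuity of your rotation $R_{-c,J}$ lies at the left endpoint of $J_1$, whose $f$-preimage is the endpoint $a$ of $J$. (One tiny quibble: in the base case $n=1$ the map is necessarily the identity, not a nontrivial rotation, but this does not affect the argument.)
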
 

\begin{proof} 

For clarity, given $K=[c,d)$ and $L=[d,e)$ two consecutive half-open intervals, we denote by $R_{K,L}$ the restricted rotation of support $K\sqcup L$ whose interior discontinuity point is $d$. Let $g\in \mathcal G^+(J)$ with continuity intervals $I_1,\cdots I_m$ and let $g(I_i)=J_{\pi(i)}$. We consider $R_1= R_{K,L}$, where $K=J_1 \cup \cdots \cup J_{\pi(1)-1}$ and $L=J_{\pi(1)}$. One directly has that $R_1\circ g \vert _{I_1}=\id$ and $g_1 := R_1\circ g \vert _{I_2 \cup \cdots \cup I_m}$ has at most $m-1$ continuity intervals.

Starting with $g_1$, we define similarly $R_2$ and we get that $R_2\circ g_1 \vert _{I_2}=\id$ and $g_2 := R_2\circ g_1 \vert _{I_3 \cup \cdots \cup I_m}$ has at most $m-2$ continuity intervals.

\smallskip

Repeating the previous argument $m-1$ times leads to a $g_{m-1}$ having at most $1$ continuity interval, so $g_{m-1}=\id$.

\smallskip

Extending the restricted rotations $R_i$ to $J$ by the identity map, we conclude that 

$\displaystyle R_{m-1} \circ \cdots \circ R_1 \circ g = \id$ and then $g$ is a product of a finitely many restricted rotations.

\medskip

Let $f\in \mathcal G (J)$, we denote by $I_1,\cdots I_m$  the continuity intervals of $f$ and by $$F=\left\{i\in \{1,...m\} :  f \mbox{ is orientation reversing on } I_i\right\}.$$ It is easy to check that $f \circ \prod_{i\in F} \mathcal I_{I_{i}} $ belongs to $\mathcal G^+(J)$ and that the $\mathcal I_{I_{i}}$'s have disjoint supports, so $\prod_{i\in F} \mathcal I_{I_{i}}$ is a distinguished involution and the second item of Lemma \ref{Lem6} directly follows.
\end{proof}

\begin{lemma} (Proposition \ref{PLAIET}) Every element of $\ieta$ (resp. $\ietg$) is the product of an element of $\iet$ and an orientation preserving PL homeomorphism of $J$ (resp. homeomorphism of $J$)
\end{lemma}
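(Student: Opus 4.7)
The plan is to generalize, almost verbatim, the argument given in Proposition \ref{PLAIET} for the interval $I=[0,1)$ to the arbitrary half-open interval $J$, and then observe that the very same combinatorial construction applies in the generalized case because no use is made of the affine structure of $f$ beyond the fact that it restricts to an orientation preserving homeomorphism on each continuity interval.

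Let $f$ belong to $\ieta$ (resp.~$\ietg$). First I would let $I_1,\ldots ,I_p$ be the maximal half-open continuity intervals of $f$, listed in the natural order of $J$, so that $\{I_i\}$ partitions $J$. For each $i$, the image $f(I_i)$ is a half-open subinterval of $J$; after relabelling these images in their natural order as $J_1,\ldots ,J_p$, there is a permutation $\pi\in S_p$ such that $f(I_i)=J_{\pi(i)}$.

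Next I would define the IET $E\in\iet$ determined by the partition $\{J_k\}$ and the permutation $\pi^{-1}$: $E$ acts as a translation on each piece $J_k$, sending $J_{\pi(i)}$ to the $i$-th slot $I_i'$ in a new partition of $J$ whose pieces have lengths $|J_{\pi(1)}|,|J_{\pi(2)}|,\ldots$ read in order. I would then set $h=E\circ f$. By construction $h(I_i)=I_i'$, and the restriction $h|_{I_i}$ is an orientation preserving affine bijection $I_i\to I_i'$ (resp.~an orientation preserving homeomorphism $I_i\to I_i'$). Since the intervals $I_i'$ are laid out consecutively in $J$ in the same order as the $I_i$, continuity at each interior endpoint $a_i$ separating $I_{i-1}$ and $I_i$ amounts to $h(a_i^-)=\text{right endpoint of }I_{i-1}'=\text{left endpoint of }I_i'=h(a_i)$, which holds automatically. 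Hence $h$ is a continuous orientation preserving bijection of $J$, so $h\in PL^+(J)$ in the affine case (resp.~$h\in\homeo^+(J)$ in the generalized case).

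The conclusion is then immediate: $f=E^{-1}\circ h$, with $E^{-1}\in\iet$ and $h$ in the relevant group of homeomorphisms. I do not expect any genuine obstacle; the only point requiring care is the bookkeeping that ensures the new partition $\{I_i'\}$ inherits the natural order of $\{I_i\}$, so that $h$ fits together continuously across the interior discontinuity points of $f$. The same argument works uniformly for $\ieta$ and $\ietg$ precisely because the construction of $E$ depends only on the combinatorics of the partition and permutation, not on the local nature (affine versus homeomorphic) of $f$ on its continuity intervals.
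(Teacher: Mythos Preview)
Your proof is correct and follows essentially the same approach as the paper: defining the IET $E$ from the image partition $\{J_k\}$ and the inverse permutation $\pi^{-1}$, then setting $h=E\circ f$ and writing $f=E^{-1}\circ h$. You spell out the continuity check for $h$ more explicitly than the paper does, but the construction and reasoning are identical.
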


Proposition \ref{PLAIET} is only stated for $\ieta$, but the proof is exactly the same for $\ietg$.

\begin{lemma} For any $\epsilon >0$, any restricted rotation can be written in $\iet$ as the product of elements of $\iet $ with support of measure less than $\epsilon$.
\end{lemma}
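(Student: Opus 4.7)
Fix $\epsilon>0$ and a restricted rotation $R_{\alpha,J}$ on $J=[a,b)$ of length $\ell=b-a$. If $\ell<\epsilon$ the rotation itself has support of measure less than $\epsilon$, so I may assume $\ell\geq\epsilon$. My approach has two parts: a \emph{peeling identity} that trades a restricted rotation on $J$ for a restricted rotation on a strictly shorter subinterval composed with a small-support swap, and a reduction to the small-angle case via the additive formula $R_{\alpha,J}=R_{\alpha/n,J}^{n}$.

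\textbf{Peeling identity.} For $0<\beta$ with $2\beta\leq\ell$, let $B_\beta\in\iet$ be the element that swaps the two half-open subintervals $[a,a+\beta)$ and $[b-\beta,b)$ by the obvious translations and is the identity elsewhere; its support has measure $2\beta$. I will verify, by a short case analysis on the three pieces $[a,a+\beta)$, $[a+\beta,b-\beta)$, $[b-\beta,b)$, the identity
\[
R_{\beta,[a,b)}\;=\;R_{\beta,[a+\beta,b)}\circ B_\beta.
\]
This is the key calculation; it expresses a restricted rotation on an interval of length $\ell$ as the composition of a restricted rotation on an interval of length $\ell-\beta$ and an IET of support $2\beta$.

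\textbf{Iteration and reduction.} Iterating the peeling identity as long as it remains valid, stopping at the largest index $m$ for which $(m+1)\beta<\ell$, yields a decomposition
\[
R_{\beta,J}\;=\;R_{\beta,[a+m\beta,\,b)}\circ B_m\circ\cdots\circ B_1,
\]
in which every factor has support of measure at most $2\beta$ (the residual rotation acts on an interval of length less than $2\beta$, and each $B_j$ has support of measure $2\beta$). Now choose a positive integer $n$ with $\alpha/n<\epsilon/2$ and set $\beta:=\alpha/n$. Since rotations on a fixed interval compose additively modulo $\ell$, one has $R_{\alpha,J}=R_{\beta,J}^{n}$; substituting the previous decomposition for each of the $n$ copies exhibits $R_{\alpha,J}$ as a product of $n(m+1)$ elements of $\iet$, each with support of measure strictly less than $\epsilon$.

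\textbf{Main obstacle.} There is no substantive difficulty; the argument is an explicit computation together with bookkeeping. The only points requiring care are the verification of the peeling identity (a routine three-case check tracking where each subinterval is sent by $R_{\beta,[a+\beta,b)}\circ B_\beta$) and choosing the stopping index $m$ so that the residual restricted rotation $R_{\beta,[a+m\beta,b)}$ acts on an interval of length less than $2\beta$, ensuring its support has measure less than $\epsilon$.
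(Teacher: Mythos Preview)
Your proof is correct and is genuinely different from the paper's argument. The paper reduces to a rotation $R_\alpha$ on $[0,1)$, constructs an $f\in\iet$ supported on $[0,\tfrac14)\cup R_\alpha[0,\tfrac14)$ agreeing with $R_\alpha$ on $[0,\tfrac14)$, and sets $g=f^{-1}R_\alpha$; then $R_\alpha=fg$ where both factors have support of measure at most $\tfrac34$. These factors are again decomposed into restricted rotations via the earlier lemma that every IET is a product of restricted rotations, and the whole process is iterated, so the support measures shrink geometrically. Your route is more explicit: you avoid the back-and-forth with the general decomposition lemma by using the peeling identity $R_{\beta,[a,b)}=R_{\beta,[a+\beta,b)}\circ B_\beta$ together with the additive law $R_{\alpha,J}=R_{\alpha/n,J}^{\,n}$, obtaining a concrete product of roughly $n\cdot(\ell/\beta)$ swaps and one residual short-interval rotation, each of support measure $\le 2\beta<\epsilon$. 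The paper's method is a clean divide-and-conquer that reuses existing lemmas; yours is self-contained and gives an explicit bound on the number of factors.
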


\begin{proof}It suffices to prove it for a rotation on $[0,1)$. Let  $R_{\alpha}$  be a rotation on $[0,1)$. We can construct an element $f\in \iet$ with support on $[0,\frac 14)\cup R_{\alpha}[0,\frac 14]$, which coincides with $R_{\alpha}$ on $[0,\frac 14)$. The measure of $\supp(f)$ is less than half the measure of $\supp(R_{\alpha})$. Let $g=f^{-1}R_{\alpha}$; it is by construction the identity on $[0,\frac 14)$, hence the measure of its support is at most $\frac 34$ that of the support of $R_{\alpha}$. Hence we have written $R_{\alpha}= fg$, where $f$ and $g$ have a support whose measure is at most $\frac 34$ that of the support of $R_{\alpha}$. Since they are elements of $\iet$, we can again decompose them in restricted rotations, which can be similarly decomposed. By iteration, we can write a rotation as a finite product of elements with arbitrarily small support.
\end{proof}

\begin{lemma} For any $\epsilon >0$, any distinguished involution can be written as the product of elements of $\ietf$ with support of measure less than $\epsilon$. 
\end{lemma}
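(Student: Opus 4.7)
My plan is to reduce to the case of a single symmetry and then to decompose that symmetry into a product of commuting small-support involutions-with-flips. Since by definition a distinguished involution is a product of symmetries $\mathcal I_{[\alpha_j,\beta_j)}$ with pairwise disjoint supports, and any decomposition of each $\mathcal I_{[\alpha_j,\beta_j)}$ into elements whose supports are contained in $[\alpha_j,\beta_j)$ preserves disjointness across factors, it suffices to prove the statement for a single symmetry $\mathcal I_{[\alpha,\beta)}$.

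Fix $\epsilon>0$, set $L=\beta-\alpha$, and choose an integer $n$ with $L/n<\epsilon$. I would partition $[\alpha,\beta)$ into $2n$ half-open subintervals of equal length, $I_1,\ldots,I_{2n}$, numbered from left to right, and, for each $k=1,\ldots,n$, define $T_k$ to coincide with $x\mapsto\alpha+\beta-x$ on $I_k\cup I_{2n+1-k}$ and with the identity elsewhere. A direct check shows that $x\in I_k$ if and only if $\alpha+\beta-x\in I_{2n+1-k}$, so $T_k$ maps $I_k$ affinely onto $I_{2n+1-k}$ with slope $-1$ and vice versa; hence $T_k$ is an involution in $\ietf$ whose support has measure $L/n<\epsilon$.

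The key observation is that the $T_k$'s have pairwise disjoint supports, hence commute, and their product equals $\mathcal I_{[\alpha,\beta)}$: on $I_k\cup I_{2n+1-k}$ the product reduces to $T_k$, which acts by $x\mapsto\alpha+\beta-x$, and the union of these supports is all of $[\alpha,\beta)$. Therefore $\mathcal I_{[\alpha,\beta)}=T_1T_2\cdots T_n$ in $\ietf$, with each $T_k$ of support less than $\epsilon$. Applying this decomposition to each symmetry appearing in a general distinguished involution completes the argument. No serious obstacle is expected: the only point requiring verification is that the set-theoretically defined $T_k$ is genuinely a piecewise isometry of slope $\pm 1$, which is immediate from the slope of $x\mapsto\alpha+\beta-x$; the finite-set ambiguity inherent to $\ietf$ absorbs any boundary discrepancies.
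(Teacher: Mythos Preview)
Your proposal is correct and follows essentially the same approach as the paper's own proof: the paper reduces to the full symmetry $\mathcal I:x\mapsto 1-x$ on $[0,1)$, partitions $[0,1)$ into $2n$ equal pieces, and defines $f_i$ to act by $x\mapsto 1-x$ on the symmetric pair $[\tfrac{i}{2n},\tfrac{i+1}{2n})\cup(1-\tfrac{i+1}{2n},1-\tfrac{i}{2n}]$, exactly your $T_{i+1}$ up to the finite-set ambiguity. The only cosmetic difference is that you work directly on a general $[\alpha,\beta)$ rather than rescaling to $[0,1)$.
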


\begin{proof}
It is enough to prove it for the involution $\mathcal I: x\mapsto 1-x$ on $[0,1)$. Let $n$ be such that $\frac 1n<\epsilon$, and let $f_i$ be such that $f_i(x)=1-x$ if $x\in [\frac i{2n}, \frac{i+1}{2n})\cup(1-\frac {i+1}{2n}, 1-\frac i{2n}]$, and $f_i(x)=x$ otherwise. It is clear that all the $f_i$ have support of measure $\frac 1n<\epsilon$, and by construction $\mathcal I=f_0f_1\ldots f_{n-1}$. %The same construction works  for any orientation reversing involution on an interval. 
\end{proof}

\begin{lemma}\label{decomp} For any $\epsilon >0$, any homeomorphism (resp. PL homeomorphism) of $[0,1)$ can be written as the product of homeomorphisms (resp. PL homeomorphisms) whose support are contained in intervals of measure less than $\epsilon$.
\end{lemma}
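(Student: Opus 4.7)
\medskip

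\noindent\textbf{Proof proposal.} My plan is to follow the classical two-step fragmentation argument for homeomorphism groups of an interval, treating the $\homeo^+$ and $PL^+$ cases in parallel since all constructions preserve PL-ness. First I would reduce to homeomorphisms of arbitrarily small displacement using a straight-line isotopy, then decompose each such small-displacement factor into finitely many factors with support in small intervals.

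\medskip

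\noindent\emph{Step A (reducing to small displacement).} I would use the straight-line isotopy $f_t(x)=(1-t)x+tf(x)$ for $t\in[0,1]$. Since $f_t$ is a convex combination of the identity and $f$, it is an orientation-preserving homeomorphism of $[0,1)$ (and is PL with the same breakpoints as $f$ when $f$ is PL). Choosing $N>5/\epsilon$ and setting $\phi_i=f_{i/N}\circ f_{(i-1)/N}^{-1}$ gives $f=\phi_N\circ\cdots\circ\phi_1$, and a one-line calculation (at $y=f_{(i-1)/N}(x)$ one finds $\phi_i(y)-y=(f(x)-x)/N$) shows $\|\phi_i-\id\|_\infty\le 1/N<\epsilon/5$.

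\medskip

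\noindent\emph{Step B (small displacement yields small support).} Given $\phi$ with $\|\phi-\id\|_\infty<\epsilon/5$, I would choose a partition $0=y_0<y_1<\cdots<y_M=1$ with uniform spacing in $(2\epsilon/5,\epsilon/2)$ (which requires $M$ of order $2/\epsilon$). For each interior index $k$, let $I_k=[y_k-\epsilon/5,y_k+\epsilon/5]$: the $I_k$ are pairwise disjoint subintervals of length $2\epsilon/5<\epsilon$, and each contains both $y_k$ and $\phi(y_k)$. Pick $\alpha_k$ supported in $I_k$ (PL when $\phi$ is PL) with $\alpha_k(\phi(y_k))=y_k$, and set $\alpha=\prod_k\alpha_k$, an unambiguous commuting product. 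Then $\alpha\phi$ fixes every partition point $y_k$, so it decomposes as a commuting product of homeomorphisms $\beta_k$, each supported in $[y_k,y_{k+1}]$ (length $<\epsilon/2$). Writing $\phi=\alpha^{-1}\cdot(\alpha\phi)$ exhibits $\phi$ as a product of homeomorphisms each supported in an interval of measure less than $\epsilon$. Combining Steps A and B yields the lemma.

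\medskip

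\noindent\emph{Main obstacle.} The only subtlety is a bookkeeping of constants in Step B: one must balance the displacement bound, the partition spacing, and the half-length of the correcting intervals $I_k$ so that the $I_k$ are pairwise disjoint (spacing greater than twice the displacement) while their length and that of the sub-partition intervals both remain strictly below $\epsilon$. The choices above ($\epsilon/5$ for the displacement bound and for the half-length, spacing in $(2\epsilon/5,\epsilon/2)$) make this work. In the PL case one only has to observe that convex interpolation, extension by identity, and piecewise linear interpolation with prescribed values at finitely many points all preserve the PL category, so every factor produced by the construction lies in $PL^+([0,1))$.
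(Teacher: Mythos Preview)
Your proof is correct, but it takes a genuinely different route from the paper's.

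The paper argues by recursive bisection: given $h$, assume (WLOG) $h(\tfrac12)<\tfrac12$, choose a homeomorphism $g$ supported in $[0,\tfrac34)$ with $g(h(\tfrac12))=\tfrac12$, and observe that $gh$ fixes $\tfrac12$, so $h=g^{-1}f_1f_2$ with $\supp(f_1)\subset[0,\tfrac12]$ and $\supp(f_2)\subset[\tfrac12,1]$. Each factor now has support of measure at most $\tfrac34$, and one iterates. No isotopy or displacement estimate is used; the only ingredient is the ability to build a compactly supported homeomorphism moving one prescribed point to another, which is clearly PL when desired.

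Your approach instead exploits the affine structure: the straight-line isotopy $f_t=(1-t)\id+tf$ reduces to arbitrarily small displacement in one stroke (and is manifestly PL-preserving), after which a single partition-and-correct step (your Step~B) finishes. This is the standard fragmentation argument familiar from diffeomorphism groups. What your version buys is an explicit, non-recursive decomposition with a transparent bound on the number of factors (roughly $O(1/\epsilon^2)$); what the paper's version buys is that it needs nothing about convexity of the target group---only the existence of bump maps---so it generalises verbatim to settings where no linear interpolation is available. Both arguments handle the PL case by the same observation that every auxiliary map constructed is PL.
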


\begin{proof} We do the proof for a homeomorphism, it works, {\em mutatis mutandis}, for a PL homeomorphism.

Let $h$ be such a homeomorphism; without loss of generality, we can suppose that $h(\frac 12)<\frac 12$. One can then construct a homeomorphism $g$ with support in $[0,\frac 34)$ such that $g(h(\frac 12))=\frac 12$. The homeomorphism $gh$ fixes the point $\frac 12$; hence it can be naturally decomposed in a product $f_1f_2$, where $\supp(f_1)\subset [0,\frac 12]$ and $\supp(f_2)\subset [\frac 12, 1]$. We have written $h=g^{-1}f_1f_2$ as the product of 3 elements whose supports have measure at most $\frac 34$ that of $h$. By iterating this construction, we can make the support of the maps contained in intervals as small as we want.
\end{proof}

If $h$ is a transformation in any of the groups $\iet$, $\ietf$ and  $\ieta$ which is not the identity, we can find an interval $E$ which is disjoint from $h(E)$. Let $\epsilon$ be less than half the length of this interval. Since the support of an element of $\iet$, $\ietf$ and $\ieta$ is a finite union of intervals, if one has two elements $g_1, g_2$ with support of measure less than $\epsilon$, it is clear that we can find an element of $\iet$ which sends the supports of $g_1, g_2$ into $E$.

All these lemmas imply the following : 

\begin{proposition} The groups $\iet$, $\ietf$ and  $\ieta$ satisfy the conditions of Lemma \ref{lemmepst}.
\end{proposition}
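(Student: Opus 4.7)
The plan is to verify conditions (1) and (2) of Lemma \ref{lemmepst} for each of the three groups $\iet$, $\ietf$ and $\ieta$; nearly all the work has already been done by the preceding decomposition lemmas, so the proof is mostly a matter of assembling them case by case.

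For condition (1), I would handle each group in turn using the chain of decomposition lemmas. For $\iet$, apply Lemma \ref{Lem6}: any element is a finite product of restricted rotations, and the lemma preceding the current proposition writes each restricted rotation as a finite product of elements of $\iet$ with support of measure $<\epsilon$; composing these two decompositions gives the required form. For $\ietf$, the second item of Lemma \ref{Lem6} writes any element as a distinguished involution times an IET; the IET factor is handled as above, while the distinguished involution factor is handled by the corresponding small-support lemma, so the combined decomposition is a finite product of small-support elements of $\ietf$. For $\ieta$, Proposition \ref{PLAIET} gives a factorization into an IET and a PL homeomorphism of $J$; the IET factor is again handled as above, and the PL factor is handled by Lemma \ref{decomp}. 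In each case the conclusion is immediate by concatenating the two finite products.

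For condition (2), let $h\in G\setminus\{\id\}$ for $G\in\{\iet,\ietf,\ieta\}$. Since $h$ is non-trivial and belongs to one of these groups, its support contains a non-degenerate half-open subinterval on which $h$ is continuous (affine, or a translation) and non-identity; by taking a subinterval if necessary, I can find a half-open interval $E\subset\supp(h)$ with $h(E)\cap E=\emptyset$. Set $\epsilon:=\frac{1}{2}|E|/2=\frac{|E|}{4}$ (any sufficiently small $\epsilon$ will do; the bound just needs $\epsilon$ smaller than half the length of $E$ to accommodate both supports after subdivision). Given $g_1,g_2\in G$ with $\mu(\supp(g_i))<\epsilon$, the supports $\supp(g_1)\cup\supp(g_2)$ are a finite union of half-open intervals of total measure $<2\epsilon<|E|$. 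Since any finite union of half-open intervals of total length strictly less than $|E|$ can be rearranged by a translation-based permutation of pieces (i.e.\ by an element of $\iet\subset G$) so as to land inside $E$, there is $f\in\iet$ with $f(\supp(g_1)\cup\supp(g_2))\subset E$, which is exactly the required property.

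Finally, invoking Lemma \ref{lemmepst} for each of the three groups yields that $[G,G]$ is the smallest normal subgroup of $G$, which is what this proposition is stated for.

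The only step requiring any actual verification (rather than citation) is the construction of the IET $f$ in condition (2). The key observation is that $\supp(g_1)\cup\supp(g_2)$, being a finite union of half-open intervals whose total length is strictly less than $|E|$, can be further subdivided into finitely many half-open intervals and then transported into $E$ by a permutation via translation; this is exactly the combinatorial content of an IET, and the construction is straightforward because we have strict inequality of total measures. No compatibility with the affine structure of $E$ is required since $f$ can be chosen in the smaller group $\iet$, which lies in all three ambient groups.
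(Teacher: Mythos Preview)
Your proposal is correct and follows essentially the same approach as the paper: condition~(1) is verified group by group via the chain of decomposition lemmas (Lemma~\ref{Lem6}, Proposition~\ref{PLAIET}, and the small-support lemmas for restricted rotations, distinguished involutions, and PL homeomorphisms), and condition~(2) is verified by choosing an interval $E$ with $h(E)\cap E=\emptyset$, taking $\epsilon$ less than half its length, and using that supports in these three groups are finite unions of intervals so that an element of $\iet\subset G$ can carry $\supp(g_1)\cup\supp(g_2)$ into $E$. This is exactly the argument the paper sketches in the paragraph immediately preceding the proposition.
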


Things are slightly more complicated for the group $\ietg$, since the support of a homeomorphism does not need to be a finite union of intervals. However, the reader will check that the proof of Lemma \ref{lemmepst} is still valid if we  reformulate condition (1), by asking the support to be contained in a finite union of intervals with total measure less than $\epsilon$, and change accordingly the condition (2). This is precisely the condition proved in Lemma \ref{decomp}. Hence the group $\ietg$ also satisfies the conclusion of Lemma \ref{lemmepst}.

%%%%%%%%%%%%%%%%%%%%%%%%%%%%%%%%%%%%%%%%%%%%%%%%%%%%%%%%%%%%%%%
\subsection{Commutators in groups of interval exchange transformations}
We now want to prove that specific elements are commutators.  
\begin{lemma} Distinguished involutions and restricted rotations are commutators in $\ietf$.\end{lemma}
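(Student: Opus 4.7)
The plan is to treat restricted rotations and distinguished involutions in turn, and within the latter, to first handle a single symmetry $\mathcal I_K$ and then assemble general products. For restricted rotations, working in $\mathcal G([0,1))$, the identities of Remark \ref{Stheta} combine to give
$$[R_\alpha, S_\theta] = R_\alpha S_\theta R_\alpha^{-1} S_\theta^{-1} = S_{\theta + 2\alpha} \circ S_\theta = R_{2\alpha},$$
so every rotation $R_\beta = [R_{\beta/2}, S_\theta]$ is a single commutator. For an arbitrary restricted rotation $R_{\gamma, K}$ with support $K \subset J$, the direct affine bijection $[0, |K|) \to K$ induces (via conjugation followed by extension by the identity on $J \setminus K$) an injective homomorphism $\mathcal G([0, |K|)) \hookrightarrow \mathcal G(J)$ which sends the rotation $R_{\gamma}$ to $R_{\gamma, K}$. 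The commutator identity therefore transfers to $\mathcal G(J)$.

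For a single symmetry $\mathcal I_K$, I would first take $K = J = [0, 1)$, so $\mathcal I := \mathcal I_{[0, 1)} = S_0$. Setting $a = \mathcal I_{[0, 1/2)}$ and $b = \mathcal I$ (both involutions, hence $[a, b] = (ab)^2$), a direct case analysis gives $[a, b] = S_{1/2}$, the distinguished involution $\mathcal I_{[0, 1/2)} \circ \mathcal I_{[1/2, 1)}$. On the other hand, Remark \ref{Stheta} applied with $\alpha = 1/4$ and $\theta = 0$ gives $R_{1/4} \mathcal I R_{-1/4} = S_{1/2}$, so $\mathcal I = R_{-1/4} [a, b] R_{1/4} = [R_{-1/4} a R_{1/4},\, R_{-1/4} b R_{1/4}]$ is again a commutator. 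An arbitrary subinterval $K \subsetneq J$ reduces to this case via the same rescaling homomorphism used above.

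Finally, for a general distinguished involution $\tau = \prod_{i=1}^n \mathcal I_{K_i}$ with pairwise disjoint $K_i$: by the previous step, applied inside each $\mathcal G(K_i) \subset \mathcal G(J)$, every factor can be written as $\mathcal I_{K_i} = [a_i, b_i]$ with both $a_i$ and $b_i$ supported in $K_i$. Disjointness of supports makes the pairs $(a_i, b_i)$ and $(a_j, b_j)$ commute elementwise for $i \neq j$, and a short induction using this commutativity yields
$$\prod_{i=1}^n [a_i, b_i] = \Bigl[\prod_{i=1}^n a_i,\, \prod_{i=1}^n b_i\Bigr],$$
so $\tau$ is a single commutator in $\mathcal G(J)$. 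The main obstacle is the middle step: the natural candidate $[\mathcal I_{[0, 1/2)}, \mathcal I]$ produces the distinguished involution $S_{1/2}$ rather than $\mathcal I$ itself, so one must recognise that $\mathcal I$ and $S_{1/2}$ lie in the same $\mathcal G([0, 1))$-conjugacy class, linked by the rotation $R_{1/4}$, in order to recover $\mathcal I$ as a commutator.
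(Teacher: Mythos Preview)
Your argument is correct. For restricted rotations it is essentially the paper's argument in reverse order: the paper also invokes Remark~\ref{Stheta} to write $R_\beta$ as a product of two $S_\theta$'s that are conjugate by a rotation, hence as a single commutator. For a single symmetry your route differs slightly from the paper's: the paper writes $\mathcal I_{[0,1)}=f_1f_2$ with $f_1=\mathcal I_{[1/4,3/4)}$ and $f_2$ its conjugate by $R_{1/2}$, obtaining $\mathcal I_{[0,1)}=[f_1,R_{1/2}]$ directly, whereas you first produce $S_{1/2}$ as $[\,\mathcal I_{[0,1/2)},\mathcal I_{[0,1)}\,]$ and then conjugate by $R_{1/4}$. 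Both are short explicit computations of the same flavour; the paper's is marginally more direct since it avoids the extra conjugation step.

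Where your proof actually goes further than the paper is the last paragraph. The paper's proof (Lemma~\ref{IRCom}) only establishes that each \emph{individual} symmetry $\mathcal I_{[\alpha,\beta)}$ is a commutator, which suffices for the application (perfectness of $\ietf$) since a distinguished involution is then a \emph{product} of commutators. Your observation that disjointly supported commutators $[a_i,b_i]$ combine into the single commutator $[\prod a_i,\prod b_i]$ upgrades this to the literal statement of the lemma, namely that every distinguished involution is itself a commutator. This is a genuine (if easy) addition.
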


\begin{lemma} \label{IRCom} Let $\alpha,\beta \in J$.

The maps $\mathcal I_{[\alpha,\beta)}$ and $R_{\theta,[\alpha,\beta)}$ are commutators in $\mathcal G\big([\alpha,\beta)\big)$ and then in $\mathcal G(J)$.
\end{lemma}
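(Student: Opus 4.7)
The plan is to reduce to the standard unit interval and then handle the two cases separately using Remark~\ref{Stheta}. First, the direct affine map $\varphi \colon [\alpha,\beta) \to [0,1)$, $\varphi(x) = (x-\alpha)/(\beta-\alpha)$, induces a group isomorphism $\mathcal G([\alpha,\beta)) \cong \mathcal G([0,1))$ which sends $\mathcal I_{[\alpha,\beta)}$ to $\mathcal I_{[0,1)} = S_0$ and $R_{\theta,[\alpha,\beta)}$ to the rotation $R_{\theta/(\beta-\alpha)}$. Since isomorphisms carry commutators to commutators, it suffices to establish that $S_0$ and every rotation $R_\theta$ on $[0,1)$ are commutators in $\mathcal G([0,1))$.

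For the rotations, I would apply Remark~\ref{Stheta} directly. As $S_0$ is an involution, $S_0^{-1} = S_0$; using $R_\alpha \circ S_0 \circ R_\alpha^{-1} = S_{2\alpha}$ and then $S_{2\alpha} \circ S_0 = R_{2\alpha}$ yields
\[
[R_\alpha, S_0] \;=\; R_\alpha S_0 R_\alpha^{-1} S_0^{-1} \;=\; S_{2\alpha}\, S_0 \;=\; R_{2\alpha}.
\]
Taking $\alpha = \theta/2$ shows that $R_\theta = [R_{\theta/2}, S_0]$ is a commutator in $\mathcal G([0,1))$.

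For the symmetry $S_0$, my idea is to exhibit it as a conjugate of $R_{1/2}$ inside $\mathcal G([0,1))$. Setting $J := \mathcal I_{[0,1/2)}$, which is an involution of $\mathcal G([0,1))$, a direct case check on $(0,1/2)$ and on $(1/2,1)$ verifies that $J R_{1/2} J^{-1}$ sends $x$ to $1-x$ throughout $(0,1)$ (the two maps differ only at the finite set $\{0,1/2\}$), so $J R_{1/2} J^{-1} = S_0$ in $\mathcal G([0,1))$. Since $R_{1/2}$ is a commutator by the previous step and conjugates of commutators are commutators, $S_0 = [J R_{1/4} J^{-1},\, J S_0 J^{-1}]$ is a commutator as well.

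The main obstacle is the symmetry case: inside the dihedral subgroup of $\mathcal G([0,1))$ generated by rotations and symmetries, every commutator lies in the index-two rotation subgroup, so $S_0$ cannot be written as a commutator there. The extra room provided by the half-interval symmetry $\mathcal I_{[0,1/2)}$, which is a genuine FIET lying outside that dihedral subgroup, is precisely what allows the identification of $S_0$ with $R_{1/2}$ up to conjugation. Finally, the inclusion $\mathcal G([\alpha,\beta)) \hookrightarrow \mathcal G(J)$ by extension by the identity transports these commutator expressions verbatim, giving the second assertion of the lemma.
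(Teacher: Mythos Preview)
Your proof is correct. The reduction to $[0,1)$ and the treatment of rotations via Remark~\ref{Stheta} are identical to the paper's argument.

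For the symmetry $\mathcal I_{[0,1)}=S_0$, however, you take a different path. The paper exhibits a direct commutator expression: it decomposes $S_0=f_1f_2$ with $f_1=\mathcal I_{[1/4,3/4)}$ and $f_2=\mathcal I_{[0,1/4)}\,\mathcal I_{[3/4,1)}$, observes that $f_2=R_{1/2}f_1R_{1/2}^{-1}$, and concludes $S_0=f_1\,R_{1/2}\,f_1^{-1}\,R_{1/2}^{-1}=[f_1,R_{1/2}]$. Your argument instead shows that $S_0$ is \emph{conjugate} to $R_{1/2}$ via $J=\mathcal I_{[0,1/2)}$ and then bootstraps from the already-established commutator expression $R_{1/2}=[R_{1/4},S_0]$, obtaining $S_0=[JR_{1/4}J^{-1},\,JS_0J^{-1}]$. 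The paper's route treats the two cases independently and gives a shorter explicit formula for $S_0$; your route is slightly more conceptual, reusing the rotation result, at the cost of making the symmetry case depend on the rotation case. Both arguments rely on the same underlying phenomenon---a half-interval flip in $\mathcal G([0,1))$ intertwines the rotation $R_{1/2}$ and the full reflection $S_0$---so the difference is one of packaging rather than substance.
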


\begin{proof} Conjugating by a homothecy, it is sufficient to prove that $\mathcal I_{[0,1)}$ and $R_{\theta,[0,1)}$ are commutators in $\mathcal G([0,1))$ and it is easy to see that $\mathcal I_{[0,1)}$ is the product of the involutions $f_1$ and $f_2$ whose best representatives are described as below: 

{\unitlength=0,3 mm
 \hskip 2.5 truecm \begin{picture}(120,120)
% L'homéo 1
%\put (20,98){$f_1$}
\put (20,98){$\widehat{f_1}$}
\put(0,10){\textcolor{gray}{\line(1,0){100}}} %\textcolor{lightgray}
\put(10,0){\textcolor{gray}{\line(0,1){100}}}
\put(0,0){$\scriptstyle 0$}
\put(30, 8){$\scriptstyle \vert$} \put(26,0){$\scriptstyle \frac{1}{4}$}
\put(70, 8){$\scriptstyle \vert$} \put(71,0){$\scriptstyle \frac{3}{4}$}
\put(90, 8){$\scriptstyle \vert$} \put(91,0){$\scriptstyle 1$}
\put(9,9){\textcolor{red}{$\scriptstyle  \bullet$}}
\put(29,29){\textcolor{red}{$\scriptstyle  \bullet$}}
\put(69,69){\textcolor{red}{$\scriptstyle  \bullet$}}
% L'homéo
\put(10,10){\line(1,1){20}}
\put(30,70){\line(1,-1){40}}
\put(70,70){\line(1,1){20}}
\end{picture} \hskip 3 truecm
% L'homéo 2
\begin{picture}(120,120) 
%\put (20,98){$ f_2$}
\put (20,98){$\widehat{f_2}$}
\put(0,10){\textcolor{gray}{\line(1,0){100}}}
\put(10,0){\textcolor{gray}{\line(0,1){100}}}

\put(0,0){$\scriptstyle 0$}
\put(30, 8){$\scriptstyle \vert$}
\put(26,0){$\scriptstyle \frac{1}{4}$}
\put(70, 8){$\scriptstyle \vert$}
\put(71,0){$\scriptstyle \frac{3}{4}$}
\put(90, 8){$\scriptstyle \vert$} \put(91,0){$\scriptstyle 1$}
\put(9,69){\textcolor{red}{$\scriptstyle  \bullet$}}
\put(29,29){\textcolor{red}{$\scriptstyle  \bullet$}}
\put(69,9){\textcolor{red}{$\scriptstyle  \bullet$}}
% L'homéo
\put(10,90){\line(1,-1){20}}
\put(30,30){\line(1,1){40}}
\put(70,30){\line(1,-1){20}}
\end{picture}}

\smallskip

As $f_2$ is conjugated to $f_1=f_1^{-1}$ by $r=R_{\frac{1}{2}}$, one has $\mathcal I_{[0,1)} =f_1 r f_1^{-1}r^{-1}$ is a commutator.

\smallskip

In addition, according to Remark \ref{Stheta}, any rotation is the product of $2$ involutions that are conjugated by a rotation; thus  $R_{\theta,[0,1)}$ is a commutator. 
\end{proof}

Since any element of $\ietf$ is a product of a distinguished involution and restricted rotations, this implies that any element of $\ietf$ is a product of commutators; and since we have proved that the commutator subgroup is the smallest normal subgroup of $\ietf$, we have proved
\begin{proposition}
The group $\ietf$ is simple.
\end{proposition}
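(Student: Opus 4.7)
The plan is essentially an assembly of the preceding lemmas. By the proposition just established immediately before the statement, $\ietf$ satisfies the two hypotheses of Lemma \ref{lemmepst}, so the commutator subgroup $[\ietf,\ietf]$ coincides with the smallest normal subgroup of $\ietf$. Therefore it suffices to show that $\ietf$ is perfect: once $\ietf=[\ietf,\ietf]$, any nontrivial normal subgroup $N$ of $\ietf$ contains this smallest normal subgroup, which equals $\ietf$, so $N=\ietf$ and simplicity follows.

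To prove perfectness, I would fix an arbitrary $f\in\ietf$ and decompose it via Lemma \ref{Lem6}: write $f = d\circ g$ with $d$ a distinguished involution and $g\in\iet$, and then factor $g$ as a finite product of restricted rotations. Since by definition a distinguished involution is itself a product of symmetries $\mathcal{I}_{[\alpha_i,\beta_i)}$ with pairwise disjoint supports, $f$ is expressed as a finite product of symmetries and restricted rotations, each of the form treated in Lemma \ref{IRCom}. That lemma then gives that each such symmetry and each such restricted rotation is a commutator in $\ietf$. Hence $f$ is a product of commutators, i.e. $f\in[\ietf,\ietf]$, establishing perfectness.

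The main substantive work has already been done in the supporting results: the Epstein-style support-shrinking argument underlying Lemma \ref{lemmepst}, and in particular the explicit construction in Lemma \ref{IRCom} exhibiting $\mathcal{I}_{[0,1)}$ as a commutator of two specific involutions conjugate via $R_{1/2}$, together with the presentation of any rotation as a commutator of two rotation-conjugate symmetries (Remark \ref{Stheta}). The proposition itself is a short corollary of these pieces, and I do not anticipate any further obstacle beyond writing out the assembly cleanly.
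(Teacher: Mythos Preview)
Your proposal is correct and follows essentially the same route as the paper: use the established fact that $\ietf$ satisfies the hypotheses of Lemma~\ref{lemmepst} so that $[\ietf,\ietf]$ is the smallest normal subgroup, and then obtain perfectness by decomposing an arbitrary element via Lemma~\ref{Lem6} into a distinguished involution and restricted rotations, each of which is a commutator by Lemma~\ref{IRCom}. There is nothing to add.
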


Let us now consider the group $\ieta$. Conjugating by a homothecy, it is sufficient to consider the group $\ietaa$.
\begin{lemma}
Every element of $PL^+([0,1])$ is a product of commutators.
\end{lemma}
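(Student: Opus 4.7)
The plan is to deduce the claim from Epstein's simplicity theorem for $PL^+(\mathbb{S}^1)$. First I would set up the natural chain of inclusions $PL^+([0,1]) \hookrightarrow PL^+(\mathbb{S}^1) \hookrightarrow \ietaa$. Indeed, any $f \in PL^+([0,1])$ is orientation-preserving and hence fixes both endpoints $0$ and $1$, so it descends to a PL homeomorphism of the quotient circle $\mathbb{S}^1 = [0,1]/(0 \sim 1)$ that fixes the basepoint; this gives the first inclusion. Under the identification $\mathbb{S}^1 \cong [0,1)$, any PL homeomorphism of $\mathbb{S}^1$ is a bijection of $[0,1)$ affine with positive slope on each of finitely many half-open sub-intervals, hence an element of $\mathcal{A}^+([0,1)) = \ietaa$; this gives the second inclusion.

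Then I would invoke Epstein's theorem (Theorem 3.2 of \cite{E68}): the group $PL^+(\mathbb{S}^1)$ is simple. Since it is manifestly non-abelian, simplicity implies that it is perfect. Therefore any $f \in PL^+([0,1]) \subset PL^+(\mathbb{S}^1)$ can be written as a product of commutators of elements of $PL^+(\mathbb{S}^1)$, and these commutators lie a fortiori in $\ietaa$, which is the required conclusion.

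There is no serious obstacle in this plan: the entire argument is essentially a one-line invocation of Epstein's theorem once the embeddings are verified, and both inclusions are immediate from the definitions. The conceptual content worth stressing is that $PL^+([0,1])$ itself is \emph{not} perfect --- its abelianization is $\mathbb{Z}^2$, generated by the log-derivatives at the two endpoints --- so the commutators in the statement genuinely require the passage to a larger group. This obstruction vanishes in $PL^+(\mathbb{S}^1)$ because identifying $0$ with $1$ collapses the two endpoint data into a single local datum which, having no distinguished basepoint on the circle, is not a well-defined invariant. Precisely this enlargement from $[0,1]$ to $\mathbb{S}^1$ (and then to $\ietaa$) is what makes the lemma true and what is used in Arnoux's proof that $\ieta$ is perfect.
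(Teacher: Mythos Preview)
Your argument is correct and suffices for the purpose at hand: the embedding $PL^+([0,1]) \hookrightarrow PL^+(\mathbb{S}^1)$ together with Epstein's simplicity theorem for $PL^+(\mathbb{S}^1)$ yields the claim, with the commutators living in $PL^+(\mathbb{S}^1)\subset\ietaa$, which is all that is needed downstream. This is in fact the route taken in the main body of the paper when it proves that $\mathcal A^+(I)$ is perfect. The appendix, where the present lemma sits, instead sketches a direct construction: one reduces to two-piece PL maps (identity off an interval $[a,b]$, affine on $[a,c]$ and on $[c,b]$) and realises each of these, up to conjugacy, as an explicit commutator $\sigma_a^{-1}\sigma_b^{-1}\sigma_a\sigma_b$ of one-break homeomorphisms $\sigma_a\in PL^+([0,1])$. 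Your approach is shorter and cleaner; the appendix's buys a concrete, essentially self-contained formula for the commutators without the detour through the circle. One minor correction to your side remark: the abelianization of $PL^+([0,1])$ is $\mathbb{R}^2$ rather than $\mathbb{Z}^2$, via $f\mapsto(\log f'(0^+),\log f'(1^-))$, since slopes here range over all positive reals.
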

\begin{proof} This result is proved in \cite{Ep}; for completeness, we give the main point of the argument. Any piecewise affine homeomorphism can be written as a product of maps which are the identity out of an interval $[a,b]$, and which are affine on $2$ intervals $[a,c]$ and $[c,b]$. It suffices to write such a map as a commutator. 

Denote by $\sigma_a$ the piecewise affine homeomorphism of $[0,1]$ which fixes 0 and 1, sends $\frac 12 $ to $a$, and is affine on $[0,\frac 12]$ and $[\frac 12, 1]$. If we choose $a,b$ with $0<a<b<\frac 12$, it is easily checked that the commutator $\sigma_a^{-1}\sigma_b^{-1}\sigma_a\sigma_b$ is a PL map which is the identity out of the interval $\frac 12, \frac {3-4a}{4-4a}$, and which is linear on $2$ intervals. 
A simple study shows that, up to conjugacy, one obtains in this way any piecewise affine map on $2$ intervals.
\end{proof}

\begin{lemma}
Any rotation is a product of commutator in $\ietaa$
\end{lemma}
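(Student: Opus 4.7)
My plan is to realize $R_\alpha$ as a product of commutators in $\ietaa$ by exploiting the affine freedom available in $\ietaa$ (namely AIETs with non-unit slopes) that is not present in $\iet$. This extra flexibility is precisely what distinguishes the two groups — recall from Proposition \ref{simpgrp} that $\iet$ is \emph{not} perfect, while we are trying to show $\ietaa$ is — so the argument must essentially use a map with a slope different from $1$.

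The first reduction is to small angles. Since $R_\alpha = (R_{\alpha/n})^n$ and the set of products of commutators is closed under multiplication, it suffices to prove the statement for $R_\beta$ with $\beta$ arbitrarily small. Choose $\beta$ small enough that we have room to manoeuvre near a fixed discontinuity point.

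Next, for such small $\beta$, I would construct an explicit piecewise affine $\phi \in \ietaa$ which has slope $\lambda \neq 1$ on a small subinterval $I_0$ near $0$ and a compensating slope on the complement (so that the length condition $\sum \lambda_i \ell_i = 1$ is satisfied and $\phi$ is an orientation‑preserving bijection of $[0,1)$). Coupled with a well chosen IET $\psi$ (a rotation, a restricted rotation, or a product of those given by Lemma \ref{Lem6}), the commutator $[\phi,\psi] = \phi\psi\phi^{-1}\psi^{-1}$ can be computed piece by piece. The conjugate $\phi\psi\phi^{-1}$ is an AIET whose discontinuities sit at the images under $\phi$ of the discontinuities of $\psi$, and whose slopes are obtained by multiplying and dividing by the slopes of $\phi$. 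With the right matching, $[\phi,\psi]$ will equal $R_\beta$ up to composition with a PL homeomorphism $P \in PL^+([0,1)) \subset \ietaa$. Rearranging gives an expression of the form $R_\beta = [\phi,\psi] \cdot P$ (or $P \cdot [\phi,\psi]$).

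Finally, the previous lemma shows that every element of $PL^+([0,1])$ is a product of commutators in $\ietaa$, so the PL factor $P$ can be absorbed into the count, and $R_\beta$ is a product of finitely many commutators. Passing back via $R_\alpha = R_{\alpha/n}^n$ finishes the proof.

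The main obstacle is the bookkeeping in step two: the discontinuity set of $\phi\psi\phi^{-1}\psi^{-1}$ must be made to coincide with a single point (the unique discontinuity of a rotation), and the slopes on the two resulting pieces must both equal $1$ after multiplying out the PL correction $P$. This is exactly the computation that fails in $\iet$ — where $\phi$ is forced to have slope $1$ everywhere — but is feasible in $\ietaa$ thanks to the availability of genuinely affine pieces; this is the structural reason why Proposition \ref{simpgrp} holds for $\ietaa$ but only in the weaker form for $\iet$.
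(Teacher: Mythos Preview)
Your proposal is a plan, not a proof, and the plan has a genuine gap at its central step. You assert that for suitable $\phi,\psi\in\ietaa$ the commutator $[\phi,\psi]$ will equal $R_\beta$ up to a factor $P\in PL^+([0,1])$, but you never exhibit such $\phi,\psi$ or verify the claim; you yourself call this ``the main obstacle'' and leave the bookkeeping undone. Without that construction nothing is proved. Note also that the construction cannot be trivial: the correction $P$ fixes $0$, so the commutator itself must already send $0$ to $\beta$ and have exactly one discontinuity, which places real constraints on $\phi$ and $\psi$ that you have not addressed.

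More importantly, you have overlooked a one-line argument. Rotations of $[0,1)$ are precisely the circle rotations once $[0,1)$ is identified with $\mathbb S^1$, and therefore lie in $PL^+(\mathbb S^1)$, which embeds in $\ietaa$. Epstein's theorem states that $PL^+(\mathbb S^1)$ is simple, hence perfect; so every rotation is already a product of commutators in $PL^+(\mathbb S^1)\subset\ietaa$. This is the paper's proof. Your detour via $PL^+([0,1])$ (which fixes $0$ and thus cannot contain any rotation) forces you to manufacture the ``rotation part'' by an ad hoc commutator construction, whereas passing to the circle group $PL^+(\mathbb S^1)$ gives both pieces at once.
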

\begin{proof}
The group $PL^+(\mathbb S^1)$ of piecewise affine homeomorphisms of the circle  can be embedded in $\ietaa$ as piecewise affine transformations of the interval $[0,1)$. But (see \cite{Ep}) this group is simple; hence any rotation of $[0,1)$ can be written as a product of commutators. 
\end{proof}

Hence any element of $\ieta$ is a product of commutators, and we prove as above: 
\begin{proposition}
The group $\ieta$ is simple.
\end{proposition}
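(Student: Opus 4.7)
My plan mirrors the proof of simplicity of $\ietf$ given just above. The previous proposition already established that $\ieta$ satisfies the hypotheses of Epstein's Lemma \ref{lemmepst}, so $[\ieta,\ieta]$ is contained in every non-trivial normal subgroup. Hence it is enough to show that $\ieta$ is perfect: once that is done, $[\ieta,\ieta]=\ieta$ is itself the smallest non-trivial normal subgroup, so any normal subgroup must be either trivial or equal to $\ieta$.

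To prove perfectness I would decompose an arbitrary $f\in\ieta$ using the extension of Proposition \ref{PLAIET} stated in the affine/generalised lemma after Lemma \ref{Lem6}: we can write $f = g\circ h$ with $g\in\iet$ and $h\in PL^+(J)$. The factor $h$ is a product of commutators in $\ieta$ by the first of the two lemmas immediately preceding the target proposition. For the factor $g$, Lemma \ref{Lem6} expresses it as a product of restricted rotations $R_{\alpha,K}$. Each such restricted rotation is conjugate inside $\ieta$, via the direct affine map sending $K$ affinely onto $[0,1)$ (extended by the identity outside), to a rotation of $[0,1)$; the second of the two preceding lemmas shows that any rotation is a product of commutators in $\ietaa$, and conjugation preserves that property. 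Consequently each $R_{\alpha,K}$ is a product of commutators in $\ieta$, and then so is $g$.

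Combining, $f = g\circ h$ is a product of commutators in $\ieta$, proving $\ieta = [\ieta,\ieta]$. Together with the consequence of Lemma \ref{lemmepst} recalled above, this yields the simplicity of $\ieta$.

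I do not anticipate a serious obstacle: all the ingredients are already assembled in the preceding lemmas. The only point requiring a brief check is that the commutators expressing a rotation must genuinely live in $\ieta$ rather than in some larger (e.g.\ flip-allowing) group; this is guaranteed because the rotation lemma's proof uses the embedding of $PL^+(\mathbb S^1)$ into $\ietaa$, all of whose elements are orientation preserving and piecewise affine, hence lie in $\ieta$.
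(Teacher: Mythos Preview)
Your proposal is correct and follows exactly the paper's route: Epstein's lemma reduces simplicity to perfectness, and perfectness follows from the decomposition $f=g\circ h$ with $g$ a product of restricted rotations and $h\in PL^+(J)$, each factor being a product of commutators by the two preceding lemmas. One small wording fix: the affine map sending $K$ onto $[0,1)$ ``extended by the identity outside'' is not a bijection of $J$, hence not an element of $\ieta$; what you really use is the group isomorphism $\mathcal A^+(K)\cong\ietaa$ induced by that affine map, which transports the commutator expression of a rotation back to one for $R_{\alpha,K}$ inside $\mathcal A^+(K)\subset\ieta$ (this is the same trick the paper employs in Section~6.2).
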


\begin{remark}
The proof that rotations are product of commutators is fundamentally different in $\ieta$ and $\ietf$; and indeed, the property is false in their intersection $\iet$.
\end{remark}

We end with the proof of the simplicity of $\ietg$ reduced to that of $\ietgg$.

\begin{lemma}
Any rotation and any orientation preserving homeomorphism of $[0,1]$ is a product of commutators. 
\end{lemma}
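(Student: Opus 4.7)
The plan is to handle the two statements separately, following the same two-step strategy already used in the preceding lemmas for $\ieta$, but upgraded to the purely topological category.

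For rotations, I would note that the identification $[0,1)/(0\sim 1) \cong \mathbb S^1$ yields an injective homomorphism $\homeo^+(\mathbb S^1) \hookrightarrow \ietgg$, under which a rotation $R_\theta$ on $[0,1)$ corresponds to the standard rotation of $\mathbb S^1$ by $\theta$. Since the Ulam--Von Neumann theorem (\cite{UV}), already invoked in the introduction, asserts the simplicity of $\homeo^+(\mathbb S^1)$, this group coincides with its commutator subgroup. Consequently any rotation $R_\theta$ is a product of commutators in $\homeo^+(\mathbb S^1)$, and these are \emph{a fortiori} commutators in $\ietgg$.

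For an orientation-preserving homeomorphism $h$ of $[0,1]$, I would mirror the proof of the corresponding statement for $PL^+([0,1])$ given just before. First, invoke Lemma \ref{decomp} (which is stated for both $PL$ and purely topological homeomorphisms) to write $h$ as a finite product of orientation-preserving homeomorphisms each supported in a proper subinterval $[a,b] \subsetneq [0,1]$. Second, show that any such map $f$ with support in $[a,b] \subsetneq [0,1]$ is a commutator in $\ietgg$. For this I would introduce the exact topological analogues of the bump maps $\sigma_a$ of the previous lemma, namely orientation-preserving homeomorphisms fixing $0$ and $1$ and sending $1/2$ to a prescribed point, and verify that the commutator formula $\sigma_a^{-1}\sigma_b^{-1}\sigma_a\sigma_b$ of the previous proof continues to produce, up to conjugacy, every homeomorphism supported in a proper subinterval with a single intermediate breakpoint. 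The computation is identical to the PL case, since it only uses the group structure and not the affinity of the pieces.

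The main obstacle is this last step: the PL proof relied on computing slopes and checking that the resulting piecewise affine map had exactly two pieces, whereas in the topological setting one must argue more carefully that the commutator of two topological bumps exhausts (up to conjugacy in $\homeo^+([0,1])$) all homeomorphisms supported in an interval $[1/2,(3-4a)/(4-4a)]$. The cleanest way to bypass this is to invoke the classical fact, due to Anderson (and essentially contained in \cite{Ep}), that $\homeo^+([0,1])$ is perfect, so that every $h\in \homeo^+([0,1])\subset \ietgg$ is automatically a product of commutators in $\ietgg$. Together with the rotation case, this completes the proof, and allows the final step of the appendix --- deducing the simplicity of $\ietg$ from the fact that any element is a product of a homeomorphism of $[0,1]$ and an element of $\iet$ (itself a product of restricted rotations) --- to go through exactly as for $\ieta$.
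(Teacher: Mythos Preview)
Your treatment of rotations is essentially the paper's: it simply notes that the argument already given for $\ietaa$ carries over verbatim, since $\ietaa\subset\ietgg$ (the paper reuses Epstein's simplicity of $PL^+(\mathbb S^1)$ rather than Ulam--Von~Neumann, but this is immaterial).

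For homeomorphisms your two approaches diverge from the paper. You are right that the $\sigma_a$ route does not transfer: the commutator $\sigma_a^{-1}\sigma_b^{-1}\sigma_a\sigma_b$ is a specific two-piece $PL$ map, and a general homeomorphism supported in a subinterval has no ``breakpoint'' structure to match against, so that computation has no topological analogue. Your fallback --- citing perfectness of $\homeo^+([0,1])$ --- gives a correct conclusion, but the attribution is loose: neither Anderson's nor Epstein's theorems cover $\homeo^+([0,1])$ directly (they treat compactly supported or boundaryless situations). The fact is true, for instance because any two fixed-point-free elements $f>\id$ in $\homeo^+([0,1])$ are conjugate, forcing every homomorphism to an abelian group to kill them and hence everything; but this deserves an argument rather than a pointer to \cite{Ep}. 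The paper instead gives a self-contained Mather-type construction: after reducing via Lemma~\ref{decomp} to $h$ supported in a short interval and conjugating by a rotation of $\ietgg$ so that $\supp(h)\subset[\tfrac14,\tfrac12]$, it takes $\phi\in\homeo^+([0,1])$ with $\phi(x)=x/2$ on $[0,\tfrac12]$, so that the conjugates $f_i=\phi^i h\phi^{-i}$ have pairwise disjoint supports shrinking to~$0$; the infinite product $g=f_1f_2\cdots$ then converges to a homeomorphism with $\phi^{-1}g\phi=hg$, exhibiting $h=[\phi^{-1},g]$ as a single commutator. This is more explicit than the black box, and note that the conjugation-by-rotation step genuinely uses the ambient group $\ietgg$ rather than $\homeo^+([0,1])$ alone.
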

\begin{proof}
The proof given for rotations in $\ietaa$ is still valid in $\ietgg$, since this group contains $\ietaa$.

We proved above that any homeomorphism of $[0,1]$ is a product of homeomorphisms whose supports are contained in small intervals; conjugating by a rotation, we can consider a homeomorphism $h$ of $[0,1]$ whose support is included in $[\frac 14, \frac 12]$.

Define $\phi$ on $[0,1]$ by $\phi(x)=\frac x2$ if $x<\frac 12$, $\phi(x)=\frac {3x}2-\frac 12$ if $x>\frac 12$. We define a sequence of functions $f_i$ by $f_1=\phi h\phi^{-1}$, $f_{i+1}=\phi f_i\phi^{-1}$ for $i>1$. The sequence $f_i$ converges uniformly to the identity, and they have disjoint support; hence the sequence $g_n=f_1f_2\ldots f_n$ converges to a function $g$ which is a homeomorphism of $[0,1]$ and verifies $\phi^{-1}g\phi=hg$; hence $h$ is a commutator.
\end{proof}
As above, this proves that $\ietg$ is simple.

It is well-known that the group $\iet $ is not simple, and not equal to its commutator subgroup, this is provided by the following
\begin{theo}(Arnoux-Fathi-Sah 1981 \cite{Ar}) Let $\lambda$ be the Lebesgue measure on $[0,1)$, $$\mbox{the application } \saf : \left\{ \begin{array}{ll} \iet \rightarrow \R \otimes_\Q \R \cr \quad f \quad \mapsto \saf(f) = {\ds \sum_{\alpha\in \R}} \alpha \otimes  \lambda((f-\id)^{-1} (\{\alpha\}))\end{array} \right.$$ is a morphism and its kernel is the commutator subgroup of $[\iet,\iet]$.
\end{theo}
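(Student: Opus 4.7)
\medskip

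\noindent\textbf{Proof plan.} The statement has two independent parts: SAF is a group homomorphism, and its kernel coincides with $[\iet,\iet]$. I address them in turn.

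\smallskip

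\emph{Part 1: SAF is well-defined and a morphism.} For $f\in \iet$ with maximal continuity intervals $I_1,\dots,I_n$ and translation values $\beta_1,\dots,\beta_n$, the map $f-\id$ takes only the finitely many values $\beta_i$, so the defining sum is finite and
\[ \saf(f)=\sum_{i=1}^n \beta_i\otimes \lambda(I_i). \]
To obtain $\saf(f\circ g)=\saf(f)+\saf(g)$, I would pick a common refinement $\{K_k\}$ of the partitions associated with $g$ and $f\circ g$ (taking intersections of pieces of $g$ with $g$-preimages of pieces of $f$). On each $K_k$, let $\beta_k$ be the translation of $g$ and $\gamma_k$ that of $f\circ g$; then $f$ translates $g(K_k)$ by $\gamma_k-\beta_k$, and $\{g(K_k)\}$ is also a partition witnessing $\saf(f)$, with $\lambda(g(K_k))=\lambda(K_k)$ by translation-invariance. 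By $\Q$-bilinearity of the tensor product,
\[ \saf(f)+\saf(g)=\sum_k (\gamma_k-\beta_k)\otimes \lambda(K_k)+\sum_k \beta_k\otimes \lambda(K_k)=\sum_k \gamma_k\otimes \lambda(K_k)=\saf(f\circ g). \]

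\smallskip

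\emph{Part 2, easy direction.} Since $\R\otimes_\Q \R$ is abelian, $[\iet,\iet]\subseteq \ker(\saf)$.

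\smallskip

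\emph{Part 2, hard direction.} To show $\ker(\saf)\subseteq [\iet,\iet]$, I would prove the stronger statement that $\saf$ descends to an \emph{injective} homomorphism $\widetilde{\saf}:\iet/[\iet,\iet]\to \R\otimes_\Q \R$. By Lemma \ref{Lem6}, $\iet$ is generated by restricted rotations, so their classes generate the abelian group $\iet/[\iet,\iet]$. A direct computation gives
\[ \saf(R_{\alpha,[c,d)})=\alpha\otimes(d-c-\alpha)+(\alpha-(d-c))\otimes\alpha=\alpha\otimes(d-c)-(d-c)\otimes\alpha, \]
so the image of $\saf$ lies in the antisymmetric subgroup $\{u\otimes v - v\otimes u\}\subset \R\otimes_\Q\R$. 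The task is then to establish, modulo commutators, exactly the relations defining this antisymmetric quotient, namely:
\begin{enumerate}
\item[(a)] additivity of the support: $R_{\alpha,[c,d)}\,R_{\alpha,[d,e)}\equiv R_{\alpha,[c,e)}$ modulo $[\iet,\iet]$;
\item[(b)] additivity of the angle: $R_{\alpha,[c,d)}\,R_{\alpha',[c,d)}\equiv R_{\alpha+\alpha',[c,d)}$ modulo $[\iet,\iet]$;
\item[(c)] $\Q$-rescaling compatibility relating $R_{q\alpha,[c,d)}$ and $R_{\alpha,[c,c+q(d-c))}$ modulo commutators.
\end{enumerate}
These should be derived by cutting and conjugating: relation (a) by conjugating $R_{\alpha,[d,e)}$ onto a copy of $[c,d)$ by a suitable restricted rotation and using that an element commutes with a conjugate on a disjoint support; relation (b) analogously, splitting the interval and exploiting Remark \ref{Stheta}; relation (c) by conjugating through the rescaling by $q$ on subintervals. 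Granted (a)--(c), the abelianization is generated by symbols $[\alpha,L]$ with exactly the bilinearity relations of the tensor product, so $\widetilde{\saf}$ becomes injective onto the antisymmetric part, and $\ker(\saf)=[\iet,\iet]$ follows.

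\smallskip

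\emph{Main obstacle.} I expect (b), the additivity in the rotation angle on a single interval, to be the delicate point: writing $R_{\alpha+\alpha',[c,d)}\,R_{\alpha,[c,d)}^{-1}\,R_{\alpha',[c,d)}^{-1}$ as a product of commutators requires an explicit construction (essentially a dilation-and-conjugation argument inside $[c,d)$) that is not purely formal, and it is what forces the $\Q$-linearity — rather than $\Z$-linearity — to appear in the target. Once (b) is in hand, (a) and (c) are more routine cutting-and-pasting.

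\end{appendix}
\end{document}
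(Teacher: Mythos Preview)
The paper does not give a proof of this theorem: it is quoted as a known result (Arnoux--Fathi--Sah 1981, reference \cite{Ar}) and used as input for the subsequent proposition on the simplicity of $[\iet,\iet]$. There is therefore nothing in the paper to compare your argument against.

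As for your sketch itself: Part~1 and the easy inclusion of Part~2 are fine. For the hard inclusion you have correctly identified the overall architecture --- show that the abelianisation of $\iet$ is generated by classes of restricted rotations subject to exactly the bilinearity and antisymmetry relations of $\R\wedge_{\Q}\R$ --- but you have not actually carried it out. You list relations (a), (b), (c) and say they ``should be derived by cutting and conjugating'', then flag (b) as the ``main obstacle'' without resolving it. That is an honest description of a plan, not a proof. In particular, your remark that (b) ``is not purely formal'' is right: in the actual Arnoux--Fathi--Sah argument this step (additivity in the angle modulo commutators) is where the genuine geometric work sits, and it does require an explicit construction, typically via a scissors-congruence or first-return argument, not just conjugation by restricted rotations. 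Until you write that construction down, the hard inclusion remains open in your treatment.

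One small correction: your relation (c) as stated is not quite what is needed. The $\Q$-bilinearity in the \emph{length} variable is automatic from (a) together with conjugation by IETs (which lets you move and duplicate intervals of equal length); what genuinely needs an argument is $\Q$-bilinearity in the \emph{angle} variable, and that is exactly your relation (b). So (c) is not an independent obstacle, and the difficulty is concentrated where you suspected.
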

But we have:  
\begin{proposition}
The group $[\iet, \iet]$ is simple.
\end{proposition}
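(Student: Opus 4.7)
The approach is to apply Epstein's lemma (Lemma \ref{lemmepst}) \emph{internally} inside $[\iet, \iet]$, and combine it with the perfectness of $[\iet, \iet]$. Perfectness is immediate from the Arnoux--Fathi--Sah statement just given: $\saf$ is a morphism into an abelian group, so $[\iet, \iet]\subseteq \ker(\saf)$ automatically, and the asserted equality $\ker(\saf)=[[\iet, \iet],[\iet, \iet]]$ then forces $[\iet, \iet]=[[\iet, \iet],[\iet, \iet]]$. Once the two conditions of Lemma \ref{lemmepst} are checked for the group $G=[\iet, \iet]$, perfectness tells us that the commutator subgroup of $[\iet, \iet]$ equals all of $[\iet, \iet]$, which by Lemma \ref{lemmepst} is contained in every nontrivial normal subgroup; hence $[\iet, \iet]$ is simple.

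Concretely, let $N\triangleleft [\iet,\iet]$ be nontrivial and fix $h\in N\setminus\{\id\}$; choose a subinterval $E\subset\supp(h)$ with $h(E)\cap E=\emptyset$, and set $\epsilon=\lambda(E)/2$. For condition (1), every element of $[\iet, \iet]$ is a product of commutators $[a,b]$ with $a,b\in\iet$; decomposing each such $a$ and $b$ in $\iet$ into small-support factors via the lemmas already proved and applying the identity $[a_1a_2,b]=[a_1,b]\cdot a_1[a_2,b]a_1^{-1}$ iteratively in both arguments, one rewrites each $[a,b]$ as a product of $\iet$-conjugates of commutators $[a_i,b_j]$ between small-support $\iet$-elements. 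These conjugates lie in $[\iet,\iet]$ by normality in $\iet$ and have support of the same measure as $[a_i,b_j]$ since IETs are Lebesgue-measure preserving. For condition (2), given $g_1,g_2\in[\iet, \iet]$ of support measure less than $\epsilon$, first pick $f_0\in\iet$ with $f_0(\supp(g_1)\cup\supp(g_2))\subset E$ (possible because the total support measure is less than $\lambda(E)$), then choose $f_1\in\iet$ supported in the complement of $\supp(g_1)\cup\supp(g_2)\cup E$ with $\saf(f_1)=-\saf(f_0)$; the element $f:=f_1f_0$ then lies in $\ker(\saf)=[\iet,\iet]$ and still satisfies $f(\supp(g_i))\subset E$. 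The algebraic identity in the proof of Lemma \ref{lemmepst}, now run inside $[\iet, \iet]$, yields $[g_1,g_2]\in N$ for all such $g_1,g_2$; combined with condition (1) this gives $N=[\iet,\iet]$.

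The main obstacle is the $\saf$-correction step in verifying condition (2): one needs that every value in $\saf(\iet)\subseteq \R\otimes_{\Q}\R$ is realized by an IET supported in an arbitrary prescribed open subset $U\subset [0,1)$ of positive measure. I would reduce this to two facts: the $\saf$-invariance under $\iet$-conjugation, which lets one relocate any small-support IET realizing a given value of $\saf$ into $U$; and the fact that $\saf(\iet)$ is generated by SAF-values of restricted rotations on arbitrarily short subintervals, which can be packed inside $U$. Together these yield surjectivity of $\saf$ restricted to IETs supported in $U$ onto $\saf(\iet)$, completing the proof.
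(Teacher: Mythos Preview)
Your argument is correct and shares the paper's skeleton---verify Epstein's two conditions for $G=[\iet,\iet]$ and combine with perfectness---but the three ingredients are handled differently. For perfectness, the paper does not invoke the Arnoux--Fathi--Sah statement; instead it observes that $[[\iet,\iet],[\iet,\iet]]$ is characteristic in $[\iet,\iet]$, hence normal in $\iet$, and then uses Lemma~\ref{lemmepst} for $\iet$ (already established) to conclude $[\iet,\iet]\subseteq[[\iet,\iet],[\iet,\iet]]$. This is safer: the theorem as printed (``kernel is the commutator subgroup of $[\iet,\iet]$'') is almost certainly a slip for ``the commutator subgroup $[\iet,\iet]$'', and your perfectness deduction leans on the literal reading. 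For condition~(1), you use commutator identities to break $[a,b]$ into $\iet$-conjugates of small-support commutators (note: the identity should read $[a_1a_2,b]=a_1[a_2,b]a_1^{-1}\cdot[a_1,b]$, but the conclusion is unaffected); the paper instead decomposes $f$ in $\iet$ into small pieces $g_i$ and then \emph{$\saf$-corrects} each piece by multiplying by a small-support $h_i^{-1}$ with $\saf(h_i)=\saf(g_i)$, so that $g_ih_i^{-1}\in\ker(\saf)=[\iet,\iet]$. Your route here is cleaner since it avoids $\saf$ entirely. For condition~(2) the situation is reversed: the paper simply takes $f$ to be an involution swapping the small supports into $E$, which lies in $[\iet,\iet]$ automatically because $\saf$ of an involution vanishes; your $\saf$-correction of an arbitrary $f_0$ works but is more laborious, and the realizability of prescribed $\saf$-values on small intervals that you flag as the ``main obstacle'' is entirely bypassed by the paper's choice.
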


\begin{proof} \ 

The group $\iet$ satisfies the conditions of Lemma \ref{lemmepst}, which implies that its group of commutator is the smallest normal subgroup. Since the commutator subgroup of $[\iet,\iet]$ is normal in $\iet$, the group $[\iet, \iet]$ is perfect. 

It remains to prove that the commutator subgroup satisfies also the conditions of Lemma \ref{lemmepst}. Let $f\in [\iet,\iet]$.  The $\saf$-invariant of an involution, being equal to its own opposite, is zero so any involution is a product of commutators, and eventually composing with an involution which exchanges a small interval and its image by $f$, we can assume that $\supp(f) \not=[0,1)$. By section A.2, $f$ can be decomposed as  $f=g_1\ldots g_n$, a product of interval exchange transformations with small support included in $\supp(f)$. There is no reason for $g_i$ to be a product of commutators; but in that case, its invariant $\saf(g_i)$ is not $0$, and we can find maps $h_i$ with small support disjoint from the supports of the $g_i$ such that $\saf(h_i) = \saf(g_i)$; hence $h_i$ commute with all the $g_k$, and we can write: $$f=g_1h_1^{-1}\ldots g_n h_n^{-1} h_n\ldots h_1.$$
Since $f$ is a product of commutators, $\saf(f)=0$. By construction, $\saf(g_ih_i^{-1})=0$, hence it is a product of commutator; if we define $k=h_n\ldots h_1$, we have, taking the invariant of both sides, $\saf(k)=0$, hence $k$ is a product of commutator, and $f=(g_1h_1^{-1})\ldots (g_n h_n^{-1})k$ is a decomposition in product of commutators with arbitrarily small size. 

This proves the first condition; to prove the second condition we can find involutions sending a finite union of intervals inside an interval of larger measure.
\end{proof}
\end{appendix}

\bibliographystyle{alpha} \bibliography{RefUSimple}

\end{document}